\theoremstyle{plain}
\newtheorem{theorem}{Theorem}[section] 
\newtheorem{corollary}[theorem]{Corollary}
\newtheorem{lemma}[theorem]{Lemma}
\newtheorem{proposition}[theorem]{Proposition}
\newtheorem{definition}[theorem]{Definition}
\theoremstyle{remark}
\newtheorem{remark}[theorem]{Remark}
\def\Re{\mathop{\mathrm{Re}}\nolimits}
\def\Im{\mathop{\mathrm{Im}}\nolimits}
\def\Z{\mathrm{Z}}
\def\Zm{\mathrm{Z}^{(m)}}
\def\Od{\Omega^\delta}
\def\contOd{\widehat\Omega^\delta}
\def\Int{\mathrm{Int}}
\def\const{\mathrm{const}}
\def\crad{\mathrm{crad}}
\def\dist{\mathrm{dist}}
\begin{document}

\title[On the convergence of massive LERW to massive SLE(2) curves]{On the convergence of massive loop-erased random walks to massive SLE(2) curves}

\author[Dmitry Chelkak]{Dmitry Chelkak$^\mathrm{a,b}$}

\author[Yijun Wan]{Yijun Wan$^\mathrm{a}$}

\thanks{\textsc{${}^\mathrm{A}$ D\'epartement de math\'ematiques et applications de l'ENS, \'Ecole Normale Sup\'erieure PSL Research University, CNRS UMR 8553, 45 rue d'Ulm, Paris, France.} }

\thanks{\textsc{${}^\mathrm{B}$ Holder of the ENS--MHI chair funded by MHI. On leave from St.~Petersburg Dept. of Steklov Mathematical Institute RAS, Fontanka 27, St.~Petersburg, Russia.}}

\thanks{\emph{E-mail:} \texttt{dmitry.chelkak@ens.fr}, \texttt{yijun.wan@ens.fr}}

\begin{abstract}
Following the strategy proposed by Makarov and Smirnov~\cite{Makarov-Smirnov} in 2009 (see also~\cite{BBK,BBC} for theoretical physics arguments), we provide technical details for the proof of convergence of massive loop-erased random walks to the chordal mSLE(2) process. As no follow-up to~\cite{Makarov-Smirnov} appeared since then, we believe that such a treatment might be of interest to the community. We do not require any regularity of the limiting planar domain  near its degenerate
prime ends $a$ and $b$ except that $(\Omega^{\delta},a^{\delta},b^{\delta})$ are assumed to be ‘close discrete approximations’ to $(\Omega, a, b)$ near $a$ and $b$ in the sense of a recent work \cite{karrila}.
\end{abstract}

\keywords{loop-erased walks, massive SLE curves}

\subjclass[2010]{60Dxx, 82B20}

\maketitle

\section{Introduction}
The classical loop-erased random walk (LERW) in a discrete domain~$\Od\subset\delta\mathbb Z^2$ is a curve obtained from a simple random walk trajectory by erasing the loops in chronological order. In the famous paper~\cite{LSW-LERW} the convergence of such trajectories to the so-called SLE(2) curves (see~\cite{lawler-book,kemppainen-book,berestycki-norris} and references therein) was proved by Lawler, Schramm and Werner. Namely, let~$\Od$ be discrete approximations to a simply connected domain~$\Omega$ such that~$0\in\Omega$. Then, LERW obtained from simple random walks on~$\Od$ started at~$0$ and stopped when hitting~$\partial\Omega$ converge (in law) to the so-called \emph{radial} SLE(2) process in~$\Omega$. This result was generalized by Zhan~\cite{zhan-04} for multiply connected domains~$\Omega$ and also for the \emph{chordal} setup when the random walks are started at a (discrete approximation of) boundary point~$a\in\partial\Omega$ and are conditioned to exit~$\Od$ through another boundary point~$b\in\partial\Omega$. Later on, another generalization appeared in~\cite{yadin-yehudayoff}: instead of~$\delta\mathbb Z^2$ one can consider any sequence of graphs~$\Gamma^\delta$ such that the simple random walks on~$\Gamma^\delta$ converge to the Brownian motion. Since then, variants of the LERW model have become standard examples of lattice systems for which one can rigorously prove the convergence of interfaces to SLE and the Conformal Field Theory (CFT) predictions for correlation functions, e.g. see~\cite{karrila-kytola-peltola}.

In parallel with a great success of studying the (conjectural) conformally invariant limits of \emph{critical} 2D lattice models achieved during the last two decades, a program to study their near-critical perturbations was advocated by Makarov and Smirnov in 2009, with \emph{massive} LERW (mLERW) being one of the cases most amenable for the rigorous analysis, see~\cite{Makarov-Smirnov}.
On square lattice with mesh size $\delta$, given~$m>0$, {the \emph{massive random walk}  is defined as follows: at each step, the walk moves to one of the four neighboring vertices with probability $\frac{1}{4}(1-m^2\delta^2)$ or dies with probability~$m^2\delta^2$ (which is called the \emph{killing rate}).} Then, mLERW in~$\Od$ is defined by applying the same loop erasing procedure as above to massive random walks, conditioned to exit from~$\Od$ through a fixed boundary point~$b^\delta$ and not to die before this moment. The following result is given in~\cite[Theorem~2.1]{Makarov-Smirnov}:
\begin{theorem}\label{theorem}
Let~$(\Od;a^\delta,b^\delta)$ be discrete approximations to a bounded simply connected domain~$(\Omega;a,b)$ with two marked boundary points ({more accurately, degenerate} prime ends {of $\Omega$; see Remark~\ref{rem:thm11}(i) below}). For each~$m>0$ the scaling limit $\gamma$ of mLERW on~$(\Od;a^\delta,b^\delta)$ exists and is given by a chordal {stochastic} Loewner evolution process~\eqref{eq:Loewner} whose driving term $\xi_t$ satisfies the SDE
\begin{equation}
\label{eq:mSLE}
\mathrm{d}\xi_t = \sqrt{2}\mathrm{d}B_t + 2\lambda_t\mathrm{d}t,\qquad {\lambda_t = \frac{\partial}{\partial (g_t(a_t))}\log \frac{P^{(m)}_{\Omega_t}(a_t,z)}{P_{\Omega_t}(a_t,z)}\Big|_{z=b}},
\end{equation}
where~$P^{(m)}_{\Omega_t}(a_t,\cdot)$ and~$P_{\Omega_t}(a_t,\cdot)$ denote the massive and the classical Poisson kernels in the domain~$\Omega_t:=\Omega\smallsetminus\gamma[0,t]$ and the logarithmic derivative with respect to~$a_t$ is taken in the Loewner chart~$g_t:\Omega_t\to \mathbb H$; {see Remark~\ref{rem:thm11}(ii).} Moreover, \eqref{eq:mSLE} has a unique weak solution whose law is absolutely continuous with respect to~$\sqrt{2}B_t$. In other words, these scaling limits {(known under the name mSLE(2))} are absolutely continuous with respect to the classical {Schramm--Loewner Evolutions with $\kappa=2$.}
\end{theorem}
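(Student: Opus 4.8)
Following~\cite{Makarov-Smirnov}, the plan is to write the law of mLERW on $(\Od;a^\delta,b^\delta)$ as an explicit density times the law of the (massless) LERW on the same marked domain, to pass this density to the scaling limit, and then to identify the limit by a Girsanov-type computation on the level of Loewner driving functions, using that the massless LERW is already known to converge to chordal $\mathrm{SLE}(2)$. \emph{Step 1: a discrete martingale observable.} Wilson's algorithm and the standard identities relating loop-erased walks to (massive) random-walk Green's functions express $\mathrm dP_{\mathrm{mLERW}}/\mathrm dP_{\mathrm{LERW}}$ on $(\Od;a^\delta,b^\delta)$ explicitly; conditioning on the first $n$ steps $\gamma^\delta[0,n]$ and using the domain Markov property of LERW, one obtains an explicit positive $\mathcal F_n$-martingale $\mathcal D^\delta_n$ under $P_{\mathrm{LERW}}$, built from the massive and massless random-walk Green's functions (equivalently, Poisson kernels) of $\Omega^\delta\smallsetminus\gamma^\delta[0,n]$, with $\mathcal D^\delta_0=1$ and $\mathcal D^\delta_n\equiv1$ when $m=0$. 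Being a ratio of a massive to a massless quantity it satisfies $0<\mathcal D^\delta_n\le\const$ uniformly in $\delta$ and $n$. The contribution of the killing along $\gamma^\delta[0,n]$ itself, of the form $(1-m^2\delta^2)$ to the power of the length of $\gamma^\delta[0,n]$, is negligible since that length is $o(\delta^{-2})$; the mass effect is carried by the erased loops. This $\mathcal D^\delta_n$ is the discrete avatar of the continuous quantity appearing in~\eqref{eq:mSLE}.

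\emph{Step 2: convergence of the observable and of the curves.} The massive walk is the simple random walk killed at rate $m^2\delta^2$ per step, hence converges after diffusive rescaling to Brownian motion killed at a constant rate; since killing only reinforces the Beurling-type hitting estimates available for the simple walk, the discrete massive Poisson kernel converges to the continuous one $P^{(m)}_{\Omega}$—the solution of $(\Delta-m^2)u=0$ with the appropriate normalization—uniformly on compact subsets and, crucially, with uniform control near the degenerate prime ends in the `close approximation' sense of~\cite{karrila}. Consequently $\mathcal D^\delta_n$ converges, along the curve, to the bounded functional
\[
\mathcal D_t \;=\; \frac{P^{(m)}_{\Omega_t}(\gamma(t),z)}{P_{\Omega_t}(\gamma(t),z)}\Big|_{z=b}\cdot\Big(\frac{P^{(m)}_{\Omega}(a,z)}{P_{\Omega}(a,z)}\Big|_{z=b}\Big)^{-1},
\]
which is well defined because the two kernels share the same singularity at $z=b$. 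Combined with the known convergence of the massless LERW to chordal $\mathrm{SLE}(2)$ as a Loewner curve~\cite{LSW-LERW,zhan-04,yadin-yehudayoff}—the only delicate point in transferring tightness and curve convergence being the behaviour near the target $b^\delta$, handled exactly as in~\cite{karrila}—this gives: the laws of the mLERW curves are tight, every subsequential limit is a time-change of a chordal Loewner chain with a continuous driving process $\xi_t$, and $\mathcal D_t$ is a martingale with respect to it.

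\emph{Step 3: driving SDE, well-posedness, absolute continuity.} Since $\mathcal D_t\equiv1$ and $\xi_t=\sqrt2B_t$ when $m=0$, and $\mathcal D_t$ is a positive martingale with $\mathcal D_0=1$ that is a functional of the Loewner flow, a Girsanov argument identifies the drift of $\xi_t$: computing $\mathrm d\langle\log\mathcal D,\xi\rangle_t$ by differentiating $\log\mathcal D_t$ in the position $g_t(a_t)$ of the tip and using the Loewner evolution of the massive Poisson kernel, the contributions of the flow, of $g_t(b)$, and of the massless part of the observable cancel exactly as for $\mathrm{SLE}(2)$ (that part carrying the conformal weight $h_{1,2}=1$ matched to $\kappa=2$), leaving precisely $\mathrm d\xi_t=\sqrt2\,\mathrm dB_t+\kappa\lambda_t\,\mathrm dt$ with $\kappa=2$ and $\lambda_t$ as in~\eqref{eq:mSLE}; this is the computation indicated in~\cite{Makarov-Smirnov}. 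For the remaining claims, $\lambda_t$ is bounded and continuous whenever the extremal distance from $\gamma(t)$ to $b$ stays bounded below, so the SDE has a unique weak solution up to the corresponding stopping times; running the It\^o computation in reverse shows that the continuous observable $\mathcal D_t$ is a genuine (bounded, hence uniformly integrable) $\mathrm{SLE}(2)$-martingale, so $\mathrm{mSLE}(2)$ is well defined as the law obtained by tilting $\mathrm{SLE}(2)$ by $\mathcal D$. Uniqueness of the weak solution on the whole interval and its absolute continuity with respect to $\sqrt2B_t$ then follow, and in particular the subsequential limit of Step 2 is unique, upgrading tightness to convergence.

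\emph{Main difficulty.} The conceptual skeleton—Wilson's algorithm, a massive-harmonic observable, Girsanov—is standard; the real work, and the reason these technical details are worth recording, is the analytic input of Step 2: convergence of the discrete massive Poisson kernel together with the a priori crossing and boundedness estimates, made \emph{uniform up to the degenerate prime ends} $a$ and $b$ under only the weak `close approximation' hypothesis of~\cite{karrila}, with no regularity imposed on $\partial\Omega$ near $a$ and $b$.
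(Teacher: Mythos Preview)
Your overall architecture—uniform bounds on the Radon--Nikodym derivative, tightness, absolute continuity, Girsanov, then identification of the drift—matches the paper's. The gap is in Step~2, where you identify the limit of the density martingale $\mathcal D^\delta_n$ with the Poisson–kernel ratio
\[
\mathcal D_t \;=\; \frac{P^{(m)}_{\Omega_t}(a_t,b)}{P_{\Omega_t}(a_t,b)}\Big/\frac{P^{(m)}_{\Omega}(a,b)}{P_{\Omega}(a,b)}\,.
\]
This identification is \emph{false} in the massive setup. In the paper's notation your $\mathcal D_t$ equals $(N^{(m)}_t)^{-1}N^{(m)}_0$, and the paper computes explicitly (see~\eqref{eq:dN=}) that $N^{(m)}_t$ carries a non-vanishing drift $-2\pi m^2N^{(m)}_t\bigl[\int_{\Omega_t}P_tP^{(m)}_t\,dA\bigr]\mathrm dt$ under~$\mathbb P^{(m)}$; hence $(N^{(m)}_t)^{-1}$ is \emph{not} a local martingale under~$\mathbb P^{(m)}$, whereas the true inverse density $(D^{(m)}_t)^{-1}$ is. Equivalently, your $\mathcal D_t$ is not a martingale under SLE(2) and therefore cannot be the density you need for Girsanov. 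Your sentence ``the contribution of the killing along $\gamma^\delta[0,n]$ itself \ldots\ is negligible; the mass effect is carried by the erased loops'' locates precisely the issue but draws the wrong conclusion: the ratio of massive to massless loop masses attached at the moving tip does \emph{not} tend to~$1$, and contributes a nontrivial running multiplicative factor to $\mathcal D^\delta_n$ that survives in the limit; this is exactly the point of Remark~\ref{rem:Nm-not-mart} and Remark~\ref{rem:Nm-vs-Nb}.

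Because of this the paper does not try to identify the density process at all. Instead it uses, for each fixed bulk point~$v$, the observable $M^{(m)}_t(v)=P^{(m)}_{\Omega_t}(a_t,v)\cdot N^{(m)}_t$, which is a martingale \emph{under the massive measure}~$\mathbb P^{(m)}$ (as the limit of the discrete martingale~\eqref{eq:defMm}; convergence is Proposition~\ref{prop:Mconv}). Knowing already from the abstract Girsanov argument of Section~\ref{sect:abs-cont} that $\mathrm d\xi_t=\sqrt2\,\mathrm dB_t+2\lambda_t\,\mathrm dt$ for \emph{some} adapted~$\lambda_t$, one computes $\mathrm dM^{(m)}_t(v)$ via the massive Hadamard formula (Lemma~\ref{lem:mHadamard}) and a stochastic Fubini step (Lemma~\ref{lem:fubini-ok}); requiring the drift to vanish for \emph{every}~$v$ forces $\lambda_t=Q^{(m)}_t(b)/P^{(m)}_t(b)$ (Lemma~\ref{lem:dNmt=}). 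As Remark~\ref{rem:Nm-vs-Nb} notes, only the martingale part of~$N^{(m)}_t$ enters this last step, which is why the final formula for~$\lambda_t$ still looks like a logarithmic derivative of the kernel ratio even though that ratio is not the density. Your Step~3 would go through if you switched to this observable; as written, the argument does not close.
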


\begin{remark} \label{rem:thm11} {(i) We refer the reader to~\cite[Chapter 2]{pommerenke-book} for basic notions of the geometric function theory in what concerns the boundaries of planar domains and the correspondence between them induced by conformal maps. Loosely speaking, a \emph{degenerate prime end} of $\Omega$ should be thought of as an equivalence class of sequences of inner points converging to a point on the (topological) boundary of $\Omega$.} Although we only consider the chordal setup in this paper, the convergence of radial mLERW follows from almost the same lines {and} requires less effort since the normalization of the martingale observable near the target point {becomes} a trivial statement.

\smallskip

\noindent {(ii) We write the formula~\eqref{eq:mSLE} for the drift term~$2\lambda_t\mathrm{d}t$ in the same (slightly informal) form as it appeared in~\cite{Makarov-Smirnov}. The rigorous definition of the quantity
\begin{equation}
\label{eq:lambda=Qm/Pm(b)}
\frac{\partial}{\partial (g_t(a_t))}\log \frac{P^{(m)}_{\Omega_t}(a_t,z)}{P_{\Omega_t}(a_t,z)}\biggl|_{z=b}\ :=\ \frac{Q^{(m)}_{\Omega_t}(a_t,z)}{P^{(m)}_{\Omega_t}(a_t,z)}\biggr|_{z=b}
\end{equation}
is given in Section~\ref{sect:continuum}. The function $Q_{\Omega_t}^{(m)}(a_t,\cdot)$ (defined by~\eqref{eq:Qmdef}) can be thought of as the derivative of the massive Poisson kernel $P_{\Omega_t}^{(m)}(a_t,\cdot)$ (defined by~\eqref{eq:Pmdef}) with respect to the source point $a_t$ (after performing the uniformization $g_t:\Omega_t\to\mathbb{H}$). If $m=0$, then $Q_{\Omega_t}(a_t,z)/P_{\Omega_t}(a_t,z)\to 0$ as $z\to b$ (see~\eqref{eq:PQdef} and~\eqref{eq:Q(b)/P(b)=0}); this is why only the massive term remains in the right-hand side of~\eqref{eq:lambda=Qm/Pm(b)}.}
\end{remark}

To the best of our knowledge, no follow up of~\cite{Makarov-Smirnov} appeared since then. The goal of this paper is to provide technical details required for the proof of Theorem~\ref{theorem} as we believe that this might be of interest to the community and as we intend to pursue a rigorous understanding of further steps in the Makarov--Smirnov program ({notably,} those related to the near-critical Ising model; see~\cite[Sections~2.3 and~2.5]{Makarov-Smirnov} as well as~\cite[Question~4.12]{Makarov-Smirnov} for~$\kappa=3$). {It is worth emphasizing that the paper~\cite{Makarov-Smirnov} contains a lot of intriguing questions and conjectures which remain mostly unexplored since then, some of them most probably being very hard. One of the questions posed in~\cite{Makarov-Smirnov} is to understand which massive perturbations of the classical SLE($\kappa$) curves are absolutely continuous and which are mutually singular with respect to the unperturbed ones (e.g., see~\cite[Question~4.5]{Makarov-Smirnov}). In this regard, recall that
\begin{itemize}
\item The scaling limit of the near-critical percolation is known to be \emph{singular} with respect to the classical SLE(6) curves; see~\cite{nolin-werner}.
\item The scaling limits of the mLERW and of the massive Harmonic Explorer paths are \emph{absolutely continuous} with respect to SLE(2) and to SLE(4), respectively. (As mentioned in~\cite[Section~3.2]{Makarov-Smirnov}, the latter case can be analyzed using the same type of arguments. Though in this case the absolute continuity is less clear \emph{a~priori} from the discrete model, it can be derived \emph{a~posteriori} from the analysis of the driving process $\xi_t$; see also~\cite{shao-master}.)
\item However, the heuristics is controversial already for
    the scaling limit of the near-critical Ising model interfaces.
    For a while, this research direction was blocked by the lack of techniques allowing to prove the convergence of massive fermionic observables in rough domains (to the best of our knowledge, \cite[Sections~2.4,~2.5]{Makarov-Smirnov} had no follow-up). Such techniques were suggested in a recent work of Park~\cite{park-spin,park-FK} (see also an alternative approach to convergence theorems developed in~\cite[Section~4]{chelkak-semb}); we hope that they will allow to analyze this case in more detail.
\end{itemize}}

We now {move back to the main subject of this paper and} discuss the setup in which we prove Theorem~\ref{theorem}.
\begin{itemize}
\item $\Od$ are assumed to converge to~$\Omega$ in the \emph{Cara\-th\'eodory} topology (see Section~\ref{sect:Cara} for more details). We do not assume any regularity of~$\Omega$ (or~$\Od$) near degenerate prime ends~$a,b$, except that~$a^\delta,b^\delta$ are supposed to be \emph{close discrete approximations} of~$a,b$ in the sense of the recent paper of Karrila~\cite{karrila}. It is worth noting that in~\cite{zhan-04} it was assumed that the boundary of~$\Omega$ is `flat' near the target point~$b$, a technical restriction which was removed in~\cite{uchiyama} in the general setup of~\cite{yadin-yehudayoff}. Our approach to this technicality is based upon the tools from~\cite{chelkak-toolbox} (see Section~\ref{sect:boundarybehavior} for details), similar uniform estimates were independently obtained by Karrila~\cite[Appendix~A]{karrila-USTbranches} basing upon the conformal crossing estimates developed for the random walk in~\cite{kemppainen-smirnov}.
\item The mode of convergence of discrete random curves~$\gamma^\delta$ to continuous ones is provided by the framework of Kemppainen and Smirnov~\cite{kemppainen-smirnov} (with a recent addition of Karrila~\cite{karrila} in what concerns the vicinities of the endpoints~$a$ and~$b$), see Section~\ref{sect:topologies} for details. Namely, the weak convergence of the law of mLERW to that defined by~\eqref{eq:mSLE} holds with respect to each of the following topologies: uniform convergence of curves~$\gamma^\delta$ to~$\gamma$ after a reparametrization, convergence of conformal images~$\gamma^\delta_{\mathbb H}:=\phi_{\contOd}(\gamma^\delta)$ to~$\gamma_{\mathbb H}:=\phi_\Omega(\gamma)$ under the half-plane capacity parametrization, convergence of the driving terms~$\xi_t^\delta$ in the Loewner equations describing~$\gamma^\delta_{\mathbb H}$ to~$\xi_t$. Using the result of Lawler and Viklund~\cite{lawler-viklund} on the convergence of classical LERWs to SLE(2) in the so-called natural parametrization, one can easily deduce the same convergence for massive LERWs from our proof.
\end{itemize}

There are several known strategies to prove the convergence of discrete random curves to classical SLEs, most of them relying upon the convergence of \emph{discrete martingale observables}~$M^\delta_{(\Od;a^\delta,b^\delta)}(z)$ to~$M_{(\Omega;a,b)}(z)$ as~$(\Od;a^\delta,b^\delta)\to(\Omega;a,b)$; see~\eqref{eq:defMm} for the definition of these observables in the LERW case. The approach used in the original papers~\cite{LSW-LERW,zhan-04} on the subject (see also~\cite{izyurov-multiple} for similar considerations in the Ising model context) relies upon the Skorohod embedding theorem and an approximate version of the L\'evy characterization of the Brownian motion. A different viewpoint was advocated by Smirnov in~\cite{smirnov-icm06}: once the tightness framework of~\cite{kemppainen-smirnov} is set up, one gets the martingale property of~$\xi_t$ and its quadratic variation from coefficients of the asymptotic expansion of~$M_{(\Omega_t;a_t,b)}(z)$ near the \emph{target} point~$b$, e.g. see~\cite[Section~4.4]{smirnov-icm06} or~\cite[Section~6.3]{duminil-smirnov-clay} for sample computations. (Note however that~\cite{zhan-04} and~\cite{izyurov-multiple} rely upon asymptotics of~$M_{(\Omega_t;a_t,b)}(z)$ near the \emph{source} point~$a_t$, which are known to be more useful in the multiple SLE context.)

In the massive setup, one does not have conformal invariance, which makes these asymptotics of~$M_{\Omega_t}(a_t,z)$ rather sensitive to the local geometry of~$\Omega_t$ near~$b$ or~$a_t$. Moreover, even if we assume that the boundary of~$\Omega$ is flat near~$b$, these asymptotics are written in terms of Bessel functions instead of powers of~$(z\!-\!b)$.
In this paper we use a combination of the two strategies: we do rely upon the tightness framework of~\cite{kemppainen-smirnov} but analyze the stochastic processes~$M_{(\Omega_t;a_t,b)}(z)$ at \emph{fixed} points~$z\in\Omega_t$ instead of discussing their asymptotics; cf.~\cite{hongler-kytola} or~\cite[Section~3.1]{izyurov-multiple}.

In conformally invariant setups, it is known (e.g., see~\cite{werner-percolation-book} or~\cite{hongler-kytola}) that one can easily derive the fact that the process~$\xi_t$ is a continuous semi-martingale directly from the fact that~$M_{(\Omega_t;a_t,b)}(z)$ are continuous (local) martingales, using explicit representations of those via~$\xi_t$. We illustrate this idea in Section~\ref{sect:conv_m=0} when discussing the convergence of the classical LERW to SLE(2). Despite the lack of explicit formulas, similar arguments \emph{can} be used in the massive setup though being more involved. Nevertheless, we prefer to follow a more conceptual approach suggested in~\cite{BBK,BBC} and~\cite{Makarov-Smirnov}, which relies upon the Girsanov theorem and the fact that mLERW can (and, arguably, should) be viewed as the classical LERW weighted by an appropriate density caused by the killing rate; in this approach the fact that~$\xi_t$ is a semi-martingale does not require any special proof (see Section~\ref{sect:abs-cont}).

Certainly, the idea of weighting SLE curves by martingales dates back to the very first developments in the subject, e.g. see~\cite{dubedat-commSLE,schramm-wilson} or~\cite{wu-hypergeomSLE,kemppainen-smirnov-iv} for more recent examples. Nevertheless, there exist an important difference between the `critical/critical' and `the `massive/critical' contexts. In the setup of Theorem~\ref{theorem}, the density of mSLE(2) with respect to the classical SLE(2) does \emph{not} coincide with the ratio of regularized partition functions~$P^{(m)}_{\Omega_t}(a_t,b)/P_{\Omega_t}(a_t,b)$ in~$\Omega_t:=\Omega\smallsetminus\gamma[0,t]$. The reason is that the total mass of massive RW loops attached to the tip~$a_t$ is strictly smaller than the mass of the critical ones, which results in a (positive) drift of this ratio; see
also~\cite[Section~4]{BBK} for a discussion of this effect from the theoretical physics perspective. Nevertheless, the expression for the drift {term~$2\lambda_t\mathrm{d}t$} in~\eqref{eq:mSLE} has exactly the same structure as in `critical/critical' setups, see Remark~\ref{rem:Nm-vs-Nb} for additional comments.

The rest of the article is organized as follows. In Section~\ref{sect:preliminaries} we collect preliminaries and discuss the absolute continuity of mLERW with respect to LERW and that of their scaling limits. In Section~\ref{sect:convergence} we prove the convergence of discrete martingale observables as~$\delta\to 0$. Section~\ref{sect:continuum} is devoted to a priori estimates and computations in continuum. The proof of Theorem~\ref{theorem} is given at the end of the paper.

\subsection*{Acknowledgements} This research was supported by the ANR-18-CE40-0033 project DIMERS. Dmitry Chelkak is grateful to Stanislav Smirnov for explaining the ideas of~\cite{Makarov-Smirnov} during several conversations dating back to 2009--2014. We want to thank Michel Bauer, Konstantin Izyurov and Kalle Kyt\"ol\"a for valuable comments and for encouraging us to write this paper; Alex Karrila for useful discussions of his research~\cite{karrila,karrila-USTbranches}; Chengyang Shao for discussions during his spring 2017 internship at the ENS~\cite{shao-master}; and Mikhail Skopenkov for a feedback, which in particular included pointing out a mess at the end of the proof of~\cite[Theorem~3.13]{chelkak-smirnov-11}. This proof is sketched in Section~\ref{sect:convergence} with a necessary correction. {Last but not least we want to thank the referees for their careful reading of the paper and for useful comments and suggestions on the presentation of the material.}

\section{Preliminaries} \label{sect:preliminaries}

\subsection{Discrete domains, partition functions and martingale observables}\label{sect:notation}
Let $\Omega\subset\mathbb{C}$ be a bounded simply connected domain with two marked degenerate prime ends $a,b$. We approximate $(\Omega;a,b)$ by simply connected \emph{subgraphs}~$\Od$ of the square grids~$\delta\mathbb{Z}^2$ and their boundary vertices~$a^\delta,b^\delta$. More precisely, to each simply connected graph~$\Od\subset\delta\mathbb{Z}^2$ we associate an open simply connected \emph{polygonal domain}~$\widehat{\Omega}^\delta\subset\mathbb{C}$ by taking the union of all open~$2\delta\times 2\delta$ squares centered at vertices of~$\Od$. Note that the boundary of~$\widehat{\Omega}^\delta$ consists of edges of~$\delta\mathbb{Z}^2$; see Fig.~\ref{fig:domain} for an illustration. We set $\Int\Omega^\delta:=V(\Omega^{\delta})$ and define {the boundary $\partial\Od$ of $\Od$ as}
 \begin{equation}
 \label{x:paO}
 \partial \Omega^{\delta} := \{(v;(v_{\mathrm{int}},v)):\ v \notin \Int\Omega^{\delta},\ v\sim v_{\mathrm{int}},\ v_{\mathrm{int}} \in \Int\Omega^{\delta}\},
 \end{equation}
{here and below the notation $v\sim v'$ means that the vertices $v,v'\in\mathbb{Z}^2$ are adjacent to each other.} (The reason for this definition of~$\partial \Omega^\delta$ is that the same vertex~$v$ may be connected to several points $v_{\mathrm{int}}\in\Int \Omega^{\delta}$. When talking about exiting events of random walks, all such edges~$(v_{\mathrm{int}},v)$ correspond to different possibilities to exit~$\Omega^\delta$.) Usually, we slightly abuse the notation and treat $\partial \Omega^{\delta}$ as a set of~$v\in \delta\mathbb Z^2$ without indicating the outgoing edges~$(v_{\mathrm{int}},v)$ if no confusion arises. Sometimes we also use the notation~$\overline{\Omega}{}^\delta:=\Omega^\delta\cup\partial\Omega^\delta$.

\smallskip

Given~$0<\delta<m^{-1}\le +\infty$, a discrete domain~$\Omega^\delta\subset\delta\mathbb Z^2$, and two \emph{interior or boundary} vertices~$w^\delta,z^\delta$, we define the \emph{partition function} of massive random walks running from~$w^\delta$ to~$z^\delta$ in~$\Omega^\delta$ as
\begin{equation}
\label{eq:defZm}
\Zm_{\Omega^{\delta}}(w^\delta,z^\delta)\ :=\ \!\!\sum_{\pi^\delta \in S_{\Omega^{\delta}}(w^\delta;z^\delta)}\!\!\bigl(\tfrac{1}{4}(1-m^2\delta^2)\bigr)^{\#\pi^{\delta}},\quad w^\delta,z^\delta\in\overline{\Omega}{}^\delta,
\end{equation}
where $S_{\Omega^{\delta}}(w^\delta;z^\delta)$ denotes the set of all lattice paths connecting $w^\delta$ and $z^\delta$ inside~$\Omega^{\delta}$, and $\#\pi^{\delta}$ is the number of \emph{interior} edges of~$\Od$ in $\pi^{\delta}$. (In other words, we do \emph{not} count the edges~$(w^\delta,w^\delta_{\mathrm{int}})$ and~$(z^\delta_{\mathrm{int}},z^\delta)$ in~$\#\pi^\delta$ if~$w^\delta\in\partial\Od$ and/or~$z^\delta\in\partial\Od$.)  To simplify the notation, we drop the superscript $(m)$ when speaking about random walks without killing (i.e., $m=0$). {Below we often rely upon the following identity, which relates the partition functions $\Z^{(m)}_{\Omega^\delta}$ and $\Z_{\Omega^\delta}$.

\begin{lemma} \label{lem:Zm(w,z)=Z-ZmZ}
Given a discrete domain $\Omega^\delta$, two points \mbox{$z^\delta,w^\delta\in\overline{\Omega}{}^{\delta}$}, and \mbox{$m\in (0,\delta^{-1})$}, we have
	\begin{align}
\notag
 (1-m^2\delta^2)\cdot\Zm_{\Od}(w^\delta,z^\delta)\ =\ &\Z_{\Od}(w^\delta,z^\delta)\\
 &- m^2\delta^2\sum\nolimits_{v^\delta\in \Int\Od}\Z_{\Od}(w^\delta,v^\delta)\Zm_{\Od}(v^\delta,z^\delta).
 \label{eq:Zm(w,z)=Z-ZmZ} 
\end{align}
\end{lemma}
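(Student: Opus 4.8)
The plan is to expand $\mathrm{Z}^{(m)}_{\Omega^\delta}(w^\delta,z^\delta)$ according to whether a path touches the interior at all, and to isolate the contribution of the first interior edge traversed. Concretely, every lattice path $\pi^\delta\in S_{\Omega^\delta}(w^\delta;z^\delta)$ that uses at least one interior edge can be decomposed uniquely as follows: follow $\pi^\delta$ from $w^\delta$ until it makes its first step along an interior edge of $\Omega^\delta$; call $v^\delta\in\Int\Omega^\delta$ the endpoint of that first interior edge. Then $\pi^\delta$ splits into an initial segment from $w^\delta$ to $v^\delta$ whose last edge is interior, together with an arbitrary path from $v^\delta$ to $z^\delta$. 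The weight $(\tfrac14(1-m^2\delta^2))^{\#\pi^\delta}$ factorizes across this splitting since $\#\pi^\delta$ counts interior edges additively; the initial segment (including its last, interior, edge) contributes a factor $\tfrac14(1-m^2\delta^2)$ times $\mathrm{Z}^{(m)}_{\Omega^\delta}(w^\delta, v^\delta_{\mathrm{int}})$-type sums, and the tail contributes $\mathrm{Z}^{(m)}_{\Omega^\delta}(v^\delta,z^\delta)$.

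The cleaner route, and the one I would actually write, is to run the standard last-step (or first-step) decomposition on the \emph{critical} partition function $\mathrm{Z}_{\Omega^\delta}(w^\delta,z^\delta)$ and simply track the discrepancy between the weights $\tfrac14$ and $\tfrac14(1-m^2\delta^2)$. Observe that $S_{\Omega^\delta}(w^\delta;z^\delta)$ is the same path set for both models; only the per-interior-edge weight differs. Write $q=1-m^2\delta^2$. For a path with $k=\#\pi^\delta$ interior edges we have $(\tfrac14)^k = (\tfrac14 q)^k\cdot q^{-k}$, but that is not linear in $k$, so instead one telescopes: condition on the number of interior edges, or better, use the resolvent identity for the two transition kernels. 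Let $P$ be the (sub-stochastic) critical step kernel restricted to $\Int\Omega^\delta$ and $P^{(m)}=qP$ its massive counterpart; then $\mathrm{Z}_{\Omega^\delta}$ and $\mathrm{Z}^{(m)}_{\Omega^\delta}$ are, up to the boundary-edge bookkeeping in \eqref{eq:defZm}, the $(w^\delta,z^\delta)$ entries of $(I-P)^{-1}$ and $(I-qP)^{-1}$ respectively. The algebraic identity $(I-qP)^{-1} = (I-P)^{-1} + (q-1)(I-P)^{-1}P(I-qP)^{-1}$, i.e. the second resolvent identity, reads in these terms exactly as $q\cdot \mathrm{Z}^{(m)} = \mathrm{Z} - m^2\delta^2\, \mathrm{Z}\cdot P^{(m)}\!/\!q$ summed over the intermediate interior vertex; after matching the normalization factor $q$ and checking that the boundary edges at $w^\delta$ and $z^\delta$ are handled consistently (they carry weight $\tfrac14$ in both models and are excluded from $\#\pi^\delta$), this is precisely \eqref{eq:Zm(w,z)=Z-ZmZ}.

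The main obstacle is bookkeeping rather than conceptual: one must be careful that the sum $\sum_{v^\delta\in\Int\Omega^\delta}\mathrm{Z}_{\Omega^\delta}(w^\delta,v^\delta)\mathrm{Z}^{(m)}_{\Omega^\delta}(v^\delta,z^\delta)$ correctly reproduces the weight of each path exactly once, with the interior/boundary edge at the junction $v^\delta$ counted on the correct side. I would handle this by fixing the convention that $v^\delta$ is the endpoint of the first interior edge used by $\pi^\delta$ after leaving $w^\delta$, so the edge $w^\delta\to w^\delta_{\mathrm{int}}$ (if $w^\delta\in\partial\Omega^\delta$) lies in the $\mathrm{Z}_{\Omega^\delta}(w^\delta,\cdot)$ factor and is not counted in $\#$, consistent with \eqref{eq:defZm}; the factor $q$ on the left and the factor $m^2\delta^2=1-q$ in front of the sum then emerge from splitting $1 = q + (1-q)$ applied to the first-step probabilities out of $w^\delta$. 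A short induction on path length, or equivalently a direct comparison of the generating-function coefficients, then closes the argument. One also records the convergence of both sides (all sums are finite since $\Omega^\delta$ is a finite graph and $q<1$ guarantees $\mathrm{Z}^{(m)}<\infty$), so no analytic subtlety arises.
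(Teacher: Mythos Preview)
Your approach is correct but differs from the paper's. The paper evaluates the sum $\sum_{v^\delta\in\Int\Omega^\delta}\Z_{\Omega^\delta}(w^\delta,v^\delta)\Zm_{\Omega^\delta}(v^\delta,z^\delta)$ directly: every pair of paths concatenates to some $\pi^\delta\in S_{\Omega^\delta}(w^\delta;z^\delta)$, and conversely each such $\pi^\delta$ can be split at any of the $\#\pi^\delta+1$ interior vertices it traverses (counted with multiplicity), yielding for each fixed $\pi^\delta$ the geometric sum
\[
\sum_{k=0}^{\#\pi^\delta}\bigl(\tfrac14\bigr)^k\bigl(\tfrac14 q\bigr)^{\#\pi^\delta-k}=\bigl(\tfrac14\bigr)^{\#\pi^\delta}\,\frac{1-q^{\#\pi^\delta+1}}{1-q},\qquad q:=1-m^2\delta^2,
\]
from which the identity follows immediately. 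Your second-resolvent route would also work, but note a slip in the formula you wrote: the correct matrix identity is $q(I-qP)^{-1}=(I-P)^{-1}-(1-q)(I-P)^{-1}(I-qP)^{-1}$, i.e.\ $q\,\Zm=\Z-(1-q)\,\Z\,\Zm$, with no residual factor of $P$. The paper's combinatorial argument has the practical advantage that the boundary bookkeeping you worry about is automatic---the count $\#\pi^\delta+1$ of interior-vertex visits is correct regardless of whether $w^\delta,z^\delta$ are interior or boundary---whereas your approach requires the additional observation that $\Z(w^\delta,\cdot)=\Z(w^\delta_{\mathrm{int}},\cdot)$ for boundary $w^\delta$ (since the first edge is not counted in $\#\pi^\delta$) to reduce to the interior-to-interior case. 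The first-interior-edge decomposition you sketch initially would also succeed but is less direct than either.
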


\begin{proof} Recall that both $\Zm_{\Od}(w^\delta,z^\delta)$ and $\Z_{\Od}(w^\delta,z^\delta)$ are defined as sums over random walk trajectories $\pi^\delta\in S_{\Od}(w^\delta;z^\delta)$ running from $w^\delta$ to $z^\delta$ inside $\Omega^\delta$. Also,
by splitting $\pi^\delta$ into two parts (from~$w^\delta$ to~$v^\delta$ and from~$v^\delta$ to~$z^\delta$) and summing over all $\#\pi^\delta\!+\!1$ possible choices of~$v^\delta$, one easily sees that
\begin{align*}
\sum_{v^\delta\in\Int\Omega^\delta}\Z_{\Omega^\delta}(w^\delta,v^\delta)\Z^{(m)}_{\Omega^\delta}(v^\delta,z^\delta)\ &=\!\! \sum_{\pi^\delta\in S_{\Omega^\delta}(w^\delta,z^\delta)}\sum_{k=0}^{\#\pi^\delta}\ \big(\tfrac{1}{4}\big)^{k}\big(\tfrac{1}{4}(1-m^2\delta^2)\big)^{{\#\pi^\delta}-k}\\
&=\!\!\sum_{\pi^\delta\in S_{\Omega^\delta}(w^\delta,z^\delta)}\big(\tfrac{1}{4}\big)^{\#\pi^\delta}\cdot\frac{1-(1-m^2\delta^2)^{\#\pi^\delta+1}}{m^2\delta^2}\,.
\end{align*}
Thus, the identity~\eqref{eq:Zm(w,z)=Z-ZmZ} directly follows from the definition~\eqref{eq:defZm}.
\end{proof}}

\begin{figure}
	\includegraphics[clip, width=0.6\textwidth]{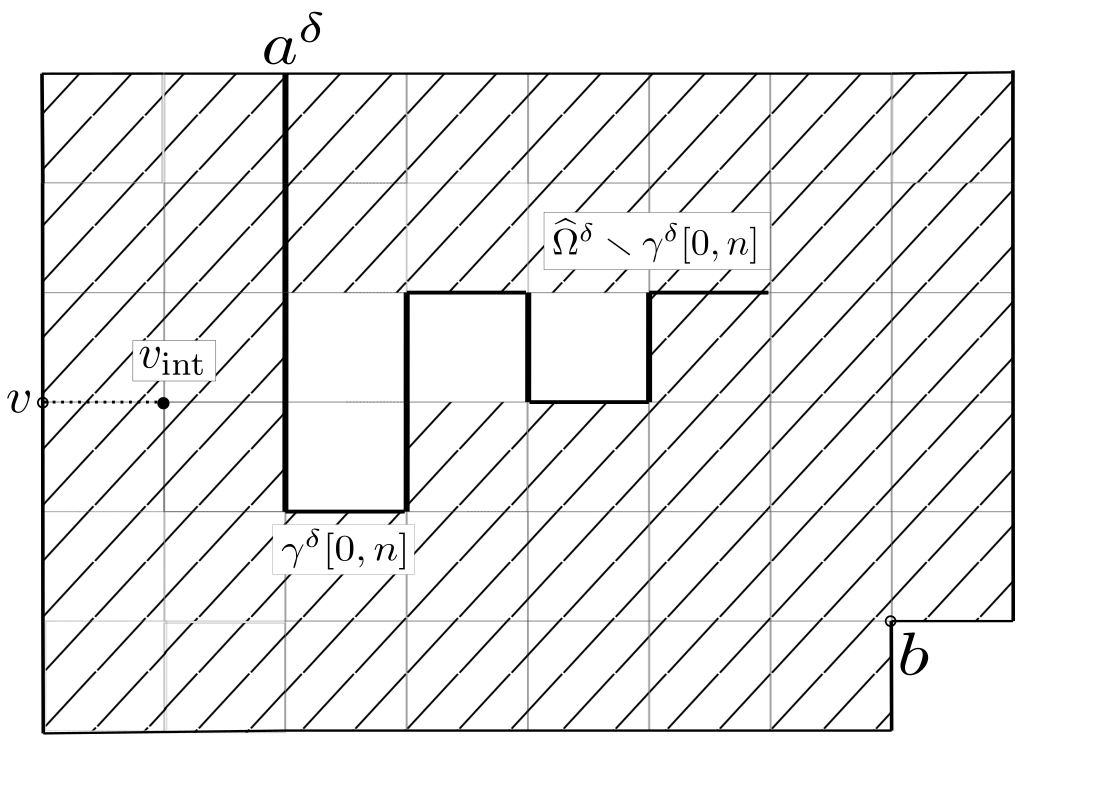}
	\caption{Discrete domain~$\Omega^\delta$, an example of a boundary vertex~$(v;(v_\mathrm{int},v))$, and a slit~$\gamma^\delta[0,n]$. The shaded area is the polygonal representation of the subgraph~$\Omega^\delta\smallsetminus\gamma^\delta[0,n]\subset\delta\mathbb{Z}^2$. Though this {polygonal domain} does not coincide with~$\contOd\smallsetminus\gamma^\delta[0,n]$, these two domains are close to each other in the Carath\'eodory sense (with respect to inner points of~$\contOd$ lying near~$b$).\label{fig:domain}}
\end{figure}

Let~$\gamma^\delta$ be a sample of the (massive or massless) LERW path from~$a^\delta$ to~$b^\delta$ in~$\Od$. We denote by~$\Omega^\delta\smallsetminus\gamma^\delta[0,n]$ the \emph{connected component} of this graph containing~$b^\delta$; see Fig.~\ref{fig:domain}. Let a sequence of vertices~$o^\delta{\in\Int\Od}$ be fixed so that~$o^\delta\to 0$ as~$\delta\to 0$. A classical argument (e.g., see~\cite[Remark~3.6]{LSW-LERW}) implies that, for each~$v^\delta\in\Int\Od$, the function
\begin{equation}
\label{eq:defMm}
M^{(m),\delta}_{\Od\smallsetminus\gamma^\delta[0,n]}(v^\delta)\ :=\ \frac{\Zm_{\Od\smallsetminus\gamma^\delta[0,n]}(\gamma^\delta(n),v^\delta)}{\Zm_{\Od\smallsetminus\gamma^\delta[0,n]}(\gamma^\delta(n),b^\delta)}\cdot \Z_{\Od}(o^\delta,b^\delta),
\end{equation}
is a martingale with respect to the filtration $\mathcal{F}_n:=\sigma(\gamma^\delta[0,n])$ generated by first~$n$ steps of $\gamma^\delta$, until~$v^\delta$ is hit by~$\gamma^\delta$ or disconnected from~$b^\delta$. The additional normalization factor~$\Z_{\Od}(o^\delta,b^\delta)$ does not depend neither on~$\gamma^\delta$ nor on~$m$ and is introduced for further convenience. {Note that the behavior of this factor (which is nothing but the harmonic measure of $b^\delta$ in $\Od$ viewed from $o^\delta$) as $\delta\to 0$ can be very irregular as we do not require much about the behavior of the boundary $\partial\Od$ near $b^\delta$; the role of this normalization is to compensate the similar irregularity in the behavior of the denominator of~\eqref{eq:defMm}.} As in the notation for partition functions, we drop the superscript~$(m)$ in~\eqref{eq:defMm} when speaking about classical ($m=0$) LERW.

\subsection{Carath\'eodory convergence of~\texorpdfstring{$\bm{\Omega^\delta}$}{} and reparametrization by capacity}\label{sect:Cara} Throughout this paper we assume that all domains under consideration are uniformly bounded (that is, are contained in some $B(0,R)$ for a fixed $R>0$) and that $0$ is contained in all domains. Let~$\phi_\Omega:\Omega\to\mathbb H$ be a conformal uniformization of~$\Omega$ onto the upper half-plane~$\mathbb H$ such that
\begin{equation}
\label{eq:defPhi}
\phi_\Omega(a)=0,\quad\phi_\Omega(b)=\infty,\quad \text{and}\quad \Im\phi_\Omega(0)=1,
\end{equation}
note that these conditions define~$\phi_\Omega$ uniquely and that one has
\begin{equation}
\label{eq:PsimG}
G_\Omega(0,z)\ =\ \frac{1}{2\pi}\log\biggl|\frac{\phi_\Omega(z)-\phi_\Omega(0)}{\phi_\Omega(z)-\overline{\phi_\Omega(0)}}\biggr|\ \sim\ -\frac{1}{\pi}\Im\frac{1}{\phi_\Omega(z)}\quad \text{as}\quad z\to b.
\end{equation}
We assume that discrete approximations $(\widehat\Omega^{\delta};a^{\delta},b^\delta)$, with~$b^\delta=b$, converge to $(\Omega;a,b)$ \emph{in the Carath\'eodory sense}, which means that (e.g., see~\cite[Chapter~1]{pommerenke-book})
\begin{itemize}
\item each inner point~$z\in\Omega$ belongs to~$\contOd$ for small enough $\delta$;
\item each boundary point~$\zeta\in\partial\Omega$ can be approximated by~$\zeta^\delta\in\partial\contOd$ as~$\delta\to0$.
\end{itemize}
Further, we require that~$a$ and~$b$ are degenerate prime ends of~$\Omega$ and that~$a^\delta$ (resp.,~$b^\delta$) is a \emph{close approximation} of~$a$ (resp., of~$b$) as defined by Karrila~\cite{karrila}:
\begin{itemize}
\item $a^\delta\to a$ as~$\delta\to 0$ and, moreover, the following is fulfilled:
\item Given $r>0$ small enough, let~$S_r$ be the arc of~$\partial B(a,r)\cap\Omega$ disconnecting (in~$\Omega$) the prime end~$a$ from~$0$ and from all other arcs of this set; in other words, $S_r$ is the last arc from a (possibly countable) collection~$\partial B(a,r)\cap\Omega$ to cross for a path running from~$0$ to~$a$ inside~$\Omega$. We require that, for each~$r$ small enough and for all sufficiently (depending on~$r$) small~$\delta$, the boundary point~$a^\delta$ of~$\Od$ is connected to the midpoint of~$S_r$ inside~$\contOd\cap B(a,r)$.
\end{itemize}
We fix a uniformization~$\phi_{\contOd}:\contOd\to{\mathbb H}$ similarly to~\eqref{eq:defPhi} so that
\[
\phi_{\contOd}(a^\delta)=0,\quad \phi_{\contOd}(b^\delta)=\infty,\quad \text{and}\quad \Im\phi_{\contOd}(0)=1,
\]
note that the Carath\'eodory convergence of~$\contOd$ to~$\Omega$ can be reformulated as
\begin{equation}\label{x:Cara}
\begin{array}{ll}
\phi_{\contOd}\ \to\ \phi_\Omega&  \text{uniformly on compact subsets of $\Omega$},\\[2pt]
\phi_{\contOd}^{-1}\ \to\ \phi_\Omega^{-1}&\text{uniformly on compact subsets of $\mathbb H$}.
\end{array}
\end{equation}

From now onwards we assume (without loss of generality) that the discrete approximations~$\contOd$ are shifted slightly so that the target point~$b^\delta=b$ is always the same. Inside all {polygonal domains $\widehat\Omega^{\delta}$} (and similarly inside~$\Omega$), one can define the inner distance to the prime end~$b$ and the $r$-vicinities of~$b$ as follows:
 \begin{align}
 \notag \rho_{\contOd}(b,z) & :=\inf\{r>0: z~\text{and}~b~\text{are connected in}~{\contOd}\cap B_{\mathbb C}(b,r)\},\\
 \label{eq:rhoB-def} B_{\contOd}(b,r) & :=\{z\in\contOd: \rho_{\contOd}(b,z)<r\}.
 \end{align}
 Note that~$\rho_\Omega(b,z)$ is a continuous function of~$z\in\Omega$. Moreover,
 \begin{equation}
 \label{x:rho<r}
 \rho_\Omega(b,z)<r\quad\Rightarrow\quad \rho_{\contOd}(b,z)<r\quad \text{for small enough}~\delta
 \end{equation}
 since a path connecting~$z$ to~$b$ inside~$\Omega\cap B_{\mathbb C}(b,r)$ eventually belongs to~${\contOd}$ except, possibly, a tiny portion near~$b$. As we assume that~$b^\delta$ is a close approximation of the prime end~$b$, the implication~\eqref{x:rho<r} follows.

 \smallskip

 Let~$\gamma^\delta_{\mathbb H}:=\phi_{\contOd}(\gamma^\delta)$ be the conformal images of LERW trajectories~$\gamma^\delta$, considered as continuous paths in the upper half-plane~$\mathbb H$. These continuous simple curves can be canonically parameterized by the so-called \emph{half-plane capacity} of their initial segments. Namely, a uniformization map~$g_t:\mathbb H\smallsetminus\gamma^\delta_{\mathbb H}[0,t]\to\mathbb H$ normalized at infinity is required to have the asymptotics~$g_t(z)=z+{2t}z^{-1}+O(|z|^{-2})$ as~$|z|\to\infty$.

 Given~$t>0$ we define a random variable~$n^\delta_t$ to be the first integer such that the half-plane capacity of~$\phi_{\contOd}(\gamma^\delta[0,n])$ is greater or equal than~$t$. Further, given a small enough~$r>0$ we define~$n^\delta_{t,r}$ to be the minimum of~$n^\delta_t$ and the first integer such that~$\gamma^\delta(n)\in B_{\contOd}(b,r)$. Clearly, both~$n^\delta_t$ and~$n^\delta_{t,r}$ are stopping times with respect to the filtration~$\mathcal{F}_n:=\sigma(\gamma^\delta[0,n])$. We set $\Omega^\delta_t$ (resp. $\Omega^\delta_{t,r}$) to be the connected component of $\Omega^\delta\smallsetminus\gamma^\delta[0,n^\delta_t]$ (resp. $\Omega^\delta\smallsetminus\gamma^\delta[0,n^\delta_{t,r}]$) including $b$ and $a^\delta_t:=\gamma^\delta(n^\delta_t)$ (resp. $a^\delta_{t,r}:=\gamma^\delta(n^\delta_{t,r}))$.

 The following lemma guarantees that the change of the parametrization from integers~$n^\delta_t$ to the half-plane capacity~$t$ does not create big jumps. The proof given below is based upon compactness arguments though one can use standard estimates (e.g., see~\cite[Proposition~6.5]{berestycki-norris}) of capacity increments in the upper half-plane~$\mathbb H$ instead. However, it is worth noting that one does not have an immediate a priori bound of~$\mathrm{diam}(\gamma^\delta_\mathbb H[0,n^\delta_{t,r}])$ in the situation when the curve~$\gamma^\delta$ approaches~$b$ along the boundary of~$\Omega^\delta$, which might require to introduce additional stopping times to handle this scenario explicitly.

 \begin{lemma} \label{lem:capacity-increments}
 Let~$(\contOd;a^\delta,b^\delta)$ approximate~$(\Omega;a,b)$ as described above. Then, for each~\mbox{$r>0$}, the increments of the half-plane capacities of the slits~$\phi_{\contOd}(\gamma^\delta[0,n])$ are uniformly (in {both}~$\gamma^\delta$ {and $n$}) small as~$\delta\to 0$ provided that~$\gamma^\delta[0,n]$ do not enter the vicinities~$B_{\contOd}(b,r)$ of the target point~$b$. In particular, the capacities of the slits $\phi_{\contOd}(\gamma^\delta[0,n^\delta_{t,r}])$ are uniformly bounded by~$t\!+\!o(1)$ as~$\delta\to 0$.
 \end{lemma}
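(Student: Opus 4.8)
The plan is to argue by contradiction using the Carathéodory convergence~\eqref{x:Cara} together with the compactness of the relevant family of conformal maps. Suppose the first claim fails: then there exist $r>0$, $\varepsilon>0$, a subsequence $\delta_k\to 0$, curves $\gamma^{\delta_k}$ and indices $n_k$ such that $\gamma^{\delta_k}[0,n_k]\cap B_{\contOd}(b,r)=\varnothing$ but the half-plane capacity increment of $\phi_{\contOd[k]}(\gamma^{\delta_k}[0,n_k])$ at step $n_k$ (i.e., the increment caused by adding the single lattice edge $\gamma^{\delta_k}(n_k)$) is at least $\varepsilon$. A single step of $\gamma^\delta$ has Euclidean length $\delta$ and stays in $\Omega\smallsetminus B_{\Omega}(b,r/2)$ for small $\delta$ (using~\eqref{x:rho<r}); by~\eqref{x:Cara} the maps $\phi_{\contOd}$ converge uniformly on the compact set $\overline{\Omega\smallsetminus B_\Omega(b,r/2)}\cap B(0,R)$ to $\phi_\Omega$, hence are uniformly Lipschitz there with a bound independent of $\delta$. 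Therefore the image $\phi_{\contOd[k]}(\gamma^{\delta_k}(n_k))$ of this last edge has Euclidean diameter $O(\delta_k)\to 0$ in $\mathbb H$, and moreover it is contained in a fixed compact subset of $\overline{\mathbb H}$ (again by~\eqref{x:Cara}). Now I invoke the standard monotonicity and continuity properties of half-plane capacity (e.g., \cite[Proposition~6.5]{berestycki-norris}): adding to any hull a further compact set of Euclidean diameter $o(1)$ lying in a bounded region increases the half-plane capacity by $o(1)$; in fact $\mathrm{hcap}(A\cup B)-\mathrm{hcap}(A)\le \mathrm{hcap}(B')$ for a suitable bounded translate, which tends to $0$ with $\mathrm{diam}(B)$. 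This contradicts the lower bound $\varepsilon$, proving the first assertion.

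For the second assertion, note that $\gamma^\delta[0,n^\delta_{t,r}-1]$ does not enter $B_{\contOd}(b,r)$ by definition of $n^\delta_{t,r}$, so by the first part each capacity increment along this initial segment is at most $o(1)$ uniformly in $\gamma^\delta$ as $\delta\to 0$. If $n^\delta_{t,r}=n^\delta_t$, then by definition of $n^\delta_t$ the capacity of $\phi_{\contOd}(\gamma^\delta[0,n^\delta_t-1])$ is strictly less than $t$, and adding the last step increases it by at most $o(1)$, so the total capacity is at most $t+o(1)$. If instead $n^\delta_{t,r}<n^\delta_t$, the capacity of $\phi_{\contOd}(\gamma^\delta[0,n^\delta_{t,r}])$ is at most that of $\phi_{\contOd}(\gamma^\delta[0,n^\delta_t])\le t+o(1)$ by monotonicity of half-plane capacity under inclusion of hulls. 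Either way the bound $t+o(1)$ follows.

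The main obstacle is the one flagged in the paragraph preceding the lemma: without the restriction that $\gamma^\delta[0,n]$ avoids $B_{\contOd}(b,r)$, the conformal maps $\phi_{\contOd}$ degenerate near $b$ and a single lattice edge near $b$ can have image of macroscopic (even unbounded, as $b\mapsto\infty$) size in $\mathbb H$, so no uniform Lipschitz bound is available and the argument genuinely breaks. This is precisely why the statement is phrased with the stopping time $n^\delta_{t,r}$ rather than $n^\delta_t$, and why one works on the compact set $\overline{\Omega\smallsetminus B_\Omega(b,r/2)}$; the uniformity over all $\gamma^\delta$ simultaneously is then automatic since the Lipschitz constant of $\phi_{\contOd}$ on that fixed compact set depends only on $\delta$ (through the convergence~\eqref{x:Cara}) and not on the curve. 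A minor technical point to check carefully is that the relevant initial segments of $\gamma^\delta$, viewed as subsets of $\contOd$, indeed stay away from $b$ in the \emph{inner} distance $\rho_{\contOd}(b,\cdot)$, so that~\eqref{x:rho<r} applies and their $\phi_{\contOd}$-images remain in a common compact subset of $\mathbb H$; this uses the close-approximation hypothesis on $b^\delta$.
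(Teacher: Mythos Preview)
Your argument has a genuine gap at the Lipschitz step. You assert that the maps $\phi_{\contOd}$ converge uniformly on $\overline{\Omega\smallsetminus B_\Omega(b,r/2)}\cap B(0,R)$ and are therefore uniformly Lipschitz there. Neither claim is justified: the Carath\'eodory convergence~\eqref{x:Cara} yields uniform convergence of $\phi_{\contOd}$ only on compact subsets of the \emph{open} domain $\Omega$, and the set you name contains $\partial\Omega\smallsetminus\{b\}$. More to the point, the curve $\gamma^\delta$ starts at the boundary vertex $a^\delta$ and can run arbitrarily close to $\partial\contOd$ thereafter (or through fjords of $\contOd$ that are not present in $\Omega$, since only Carath\'eodory convergence is assumed). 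Near $\partial\contOd$ the derivative $|\phi_{\contOd}'|$ is \emph{not} uniformly bounded, so the image of a single lattice edge need not have diameter $O(\delta)$ in $\mathbb H$. The sentence preceding the lemma in the paper warns exactly about this: one does not have an a priori bound on $\mathrm{diam}(\gamma^\delta_{\mathbb H}[0,n^\delta_{t,r}])$ when $\gamma^\delta$ approaches $b$ along the boundary; your claim that the image lies in a fixed compact subset of $\overline{\mathbb H}$ is precisely such a bound, and it is not available. You correctly identify the degeneration of $\phi_{\contOd}$ near $b$ as an obstacle, but miss that the rest of $\partial\contOd$ creates the same problem for a Lipschitz argument.

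The paper's proof avoids this difficulty altogether. Rather than estimating the image of the added edge through $\phi_{\contOd}$, it uses compactness of the \emph{slit domains} $\contOd\smallsetminus\gamma^\delta[0,n]$ in the Carath\'eodory topology (with respect to points near $b$). Along a contradicting subsequence both $\contOd\smallsetminus\gamma^\delta[0,n^\delta]$ and $\contOd\smallsetminus\gamma^\delta[0,n^\delta-1]$ converge, and since they differ by one edge of length $\delta$ they have the same limit; hence the conformal homeomorphism between them is close to the identity on compacts inside $B_\Omega(b,r)$, which (after composing with $\phi_{\contOd}^{\pm 1}$ via~\eqref{x:Cara}) forces the capacity increment to vanish. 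This argument never needs to locate the tip or to control $\phi_{\contOd}$ near the boundary away from $b$. (A minor separate point: in your second paragraph, the case $n^\delta_{t,r}<n^\delta_t$ is handled more simply by noting that then the capacity at step $n^\delta_{t,r}$ is strictly less than $t$ by the very definition of $n^\delta_t$; your monotonicity route through $n^\delta_t$ does not give $t+o(1)$, since the last step before $n^\delta_t$ may already lie in $B_{\contOd}(b,r)$.)
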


 \begin{proof}
 The set of all simply connected domains~$\contOd\smallsetminus\gamma^\delta[0,n]$ under consideration is precompact in the Carath\'eodory topology (with respect to points near~$b$). Suppose on the contrary that the one-step increments of the half-plane capacities of~$\phi_{\contOd}(\gamma^\delta[0,n])$ do \emph{not} vanish as~$\delta\to0$ for a sequence of curves~$\gamma^\delta[0,n^\delta]$ such that~$\gamma^\delta[0,n^\delta\!-\!1]\cap B_{\contOd}(b,r)=\emptyset$. By compactness, one can find a subsequence along which~${\contOd}\smallsetminus\gamma^\delta[0,n^\delta]$ converge in the Carath\'eodory sense (with respect to points near~$b$). Clearly, ${\contOd}\smallsetminus\gamma^\delta[0,n^\delta\!-\!1]$ converge to the same limit and hence one can find conformal homeomorphisms
 \[
 {\contOd}\smallsetminus \gamma^\delta[0,n^\delta]\ \to\ {\contOd}\smallsetminus \gamma^\delta[0,n^\delta\!-\!1]
 \]
 that become arbitrary close to the identity on each {compact subset} $K\subset B_\Omega(b,r)$, note that one necessarily has~$K\subset B_{\contOd}(b,r)$ for small enough~$\delta$ due to~\eqref{x:rho<r}. Due to~\eqref{x:Cara}, this implies that the conformal maps
 \[
 \mathbb H\smallsetminus \phi_{\contOd}(\gamma^\delta[0,n^\delta])\ \mathop{\longrightarrow}\limits^{\phi_{\contOd}^{-1}}\ {\contOd}\smallsetminus \gamma^\delta[0,n^\delta]\ \to\ {\contOd}\smallsetminus \gamma^\delta[0,n^\delta\!-\!1] \ \mathop{\longrightarrow}\limits^{\phi_{\contOd}}\ \mathbb H\smallsetminus \phi_{\contOd}(\gamma^\delta[0,n^\delta\!-\!1])
 \]
 become (as~$\delta\to0$) arbitrary close to the identity on compact subsets of the fixed vicinity $\phi_\Omega(B_\Omega(b,r))$ of~$\infty$ in the upper half-plane. This contradicts to the assumption that the half-plane capacities of~$\phi_{\contOd}(\gamma^\delta[0,n^\delta\!-\!1])$ and~$\phi_{\contOd}(\gamma^\delta[0,n^\delta])$ differ by a constant amount as~$\delta\to 0$.
 \end{proof}

\subsection{Chordal SLE(2) and topologies of convergence}\label{sect:topologies}

We now discuss a few basic facts on the construction of SLE curves, the interested reader is referred to~\cite{berestycki-norris,kemppainen-book,lawler-book} for more details. Let $\gamma_{\mathbb H}$ be a continuous non-self-crossing curve in the upper half-plane $\mathbb{H}:=\{z \in \mathbb{C}:\text{Im}z >0\}$, growing from~$0$ to~$\infty$. Let~$\mathbb H\smallsetminus K_t$ denote the connected component of $\mathbb{H}\smallsetminus\gamma_{\mathbb H}[0,t]$ containing~$\infty$ (if~$\gamma_{\mathbb H}$ is not only non-self-crossing but also \emph{non-self-touching}, then~$K_t=\gamma_{\mathbb H}[0,t]$). Assume that~$\gamma_{\mathbb H}$ is parameterized by half-plane capacity so that the conformal map $g_t:\mathbb H\smallsetminus K_t\to\mathbb H$ (normalized at~$\infty$) has the asymptotics $g_t(z) =z+2tz^{-1}+O(|z|^{-2})$ as~$|z|\to\infty$. Then there exists a unique real-valued function $\xi_t$, called the \emph{driving term}, such that the following equation, called the \emph{Loewner evolution equation}, is satisfied:
\begin{equation}\label{eq:Loewner}\partial_tg_t(z) = \frac{2}{g_t(z) - \xi_t}\quad \text{for~all}~z\in\mathbb H\smallsetminus K_t,
\end{equation}
where we use the shorthand notation~$\partial_t$ for the partial derivative in~$t$. Vice versa, given a nice function~$\xi_t$ one can reconstruct the growing family~$K_t$ and, further (under some assumptions on~$\xi_t$), the curve~$\gamma_{\mathbb H}$ by solving~\eqref{eq:Loewner} with~$g_0(z)=z$.

Classical $\mathrm{SLE}_{\mathbb H}(2)$ curves in the upper half-plane correspond to \emph{random} driving terms $\xi_t=\sqrt{2}B_t$, where $(B_t)_{t \geq 0}$ is a standard Brownian motion. It is known that
\begin{itemize}
\item almost surely, $\mathrm{SLE}_{\mathbb H}(2)$ is a simple curve in the upper half plane~$\mathbb H$, see~\cite{rohde-schramm};
\item almost surely, the Hausdorff dimension of $\mathrm{SLE}_{\mathbb H}(2)$ is equal to~$\frac{5}{4}$, see~\cite{beffara-dimSLE}. Moreover, one can use the corresponding Minkowski content of the initial segments of~$\mathrm{SLE}_{\mathbb H}(2)$ to introduce the so-called \emph{natural parametrization} of these curves, see~\cite{lawler-rezaei}.
\end{itemize}

Generally, given a simply connected domain~$\Omega$ with boundary points (prime ends) $a,b\in\Omega$, chordal $\mathrm{SLE}_\Omega$ curves from~$a$ to~$b$ in~$\Omega$ are defined as preimages of~$\mathrm{SLE}_{\mathbb H}$ under a conformal uniformization~$\phi_\Omega:\Omega\to\mathbb H$ satisfying~$\phi_\Omega(a)=0$ and~$\phi_\Omega(b)=\infty$. Note that this definition does no require to fix a normalization of~$\phi_\Omega$ due to the scale invariance of the law of $\mathrm{SLE}_{\mathbb H}$ curves.

\smallskip

When speaking about the tightness of random curves in~$(\Omega^\delta;a^\delta,b^\delta)$ we rely upon a powerful framework developed by Kemppainen and Smirnov in~\cite{kemppainen-smirnov} as well as upon a recent work of Karrila~\cite{karrila} (in which the behaviour in vicinities of the endpoints~$a,b$ is discussed). Let~$\xi^\delta$ be a random driving term corresponding via~\eqref{eq:Loewner} to the conformal images~$\gamma^\delta_{\mathbb H}:=\phi_{\contOd}(\gamma^\delta)$ of LERWs in~$(\Od;a^\delta,b)$. It is known since the work~\cite{aizenman-burchard} of Aizenman and Burchard (see also~\cite{LSW-LERW}) that appropriate~\emph{crossing estimates} imply that
\begin{enumerate}
\item the family of random curves~$\gamma^\delta$ (except maybe in vicinities of endpoints) is tight in the topology induced by the metric
    $\min_{\psi_1,\psi_2}\|\gamma_1\circ\psi_1-\gamma_2\circ\psi_2\|_\infty$, with minimum taken over all parametrizations~$\psi_1,\psi_2$ of two curves~$\gamma_1,\gamma_2$.
\end{enumerate}
The results of Kempainen and Smirnov (see~\cite[Theorem~1.5 and~Corollary 1.7]{kemppainen-smirnov} as well as~\cite[Section~4.5]{kemppainen-smirnov} where the required crossing estimates are checked for the loop-erased random walks) give much more:
\begin{enumerate}\setcounter{enumi}{1}
	\item the driving terms $\xi^{\delta}$ are tight in the space of continuous functions on $[0,\infty)$ with topology of uniform convergence on compact intervals $[0,T]$;
    \item the curves~$\gamma^\delta_{\mathbb H}$ are tight in the same topology as in (1);
    	\item the curves $\gamma^\delta_{\mathbb H}$, parameterized by capacity, are tight in the space of continuous functions on $[0,\infty)$ with topology of uniform convergence on~$[0,T]$.
\end{enumerate}
Moreover, a weak convergence in one of the topologies (2)--(4) imply the convergence in two others. Furthermore, provided that $(\widehat\Omega^{\delta};a^{\delta},b)$ converge to~$(\Omega;a,b)$ in the Carath\'eodory sense so that $a^{\delta}$ and $b^\delta=b$ are close approximations of degenerate prime ends~$a$ and~$b$ of~$\Omega$, the following holds:
\begin{enumerate}\setcounter{enumi}{4}
\item if a sequence of random curves $\gamma^{\delta}_{\mathbb H}$ converges weakly in the topologies \mbox{(2)--(4)} to a random curve~$\gamma_{\mathbb H}$ then $\gamma^{\delta}$ also converges weakly to a random curve which, almost surely, is supported on the limiting domain~$\Omega$ due to \cite[Corollary~1.8]{kemppainen-smirnov}, and has the same law as~$\phi_\Omega^{-1}(\gamma_{\mathbb H})$ due to \cite[Theorem~4.4]{karrila}.
\end{enumerate}

\subsection{Convergence of classical LERW to chordal SLE(2)} \label{sect:conv_m=0} To keep the presentation self-contained, in this section we sketch (a variant of the strategy used in~\cite{hongler-kytola,werner-percolation-book}) a proof of the classical result: convergence of the usual loop-erased random walks to SLE(2), in the setup of Theorem~\ref{theorem} discussed in the introduction.

As discussed above, the family of LERW probability measures on $(\Omega^{\delta};a^{\delta},b)$ is tight, provided that the curves~$\gamma^\delta$ are parameterized by the half-plane capacities of their conformal images~$\phi_{\contOd}(\gamma^\delta)$ in $(\mathbb{H};0,\infty)$. Since the space of continuous functions is metrizable and separable, by Skorokhod representation theorem, we can suppose that for each weakly convergent subsequence of these measures we also have~$\gamma^\delta\to\gamma$ almost surely.

Let~$\tau_r:=\inf\{t>0: \gamma(t)\in B_\Omega(b,r)\}$ and~$\tau_r^\delta$ be the similar stopping times (in the half-plane capacity parametrization) for the discrete curves~$\gamma^\delta$. {It is not hard to see that for each (as for now, unknown) law $\mathbb{P}$ of $\gamma$ on the set of continuous parameterized curves, the following statement holds:
\begin{equation}
\label{eq:tdr->tr}
\text{for almost all~$r>0$ one almost surely has $\tau^\delta_r\to\tau_r$.}
\end{equation}
To prove~\eqref{eq:tdr->tr}, let us consider a continuous process~$t\mapsto \rho_t:=\rho_\Omega(b,{\gamma(t)})$. Since the curves~$\gamma^\delta$ converge to~$\gamma$ in the capacity parametrization, one has~$\tau^\delta_{r}\to\tau_{r}$ unless~$\rho_t$ has a local minimum at level~$r$. (Indeed, note that the inequality \mbox{$\limsup_{\delta\to 0}\tau^\delta_r\le \tau_r$} is trivial: if $\gamma$ enters the open set $B_\Omega(b,r)$, then so $\gamma^\delta$ (with small enough $\delta$) do; no later than approximately at the same time. On the other hand, if $\rho_t$ does \emph{not} have a local minimum at level $r$, then for each $\varepsilon>0$ one can find $\eta(\varepsilon)>0$ such that $\rho_t\ge r+\eta(\varepsilon)$ for all $t\le \tau_r-\varepsilon$, which gives $\liminf_{\delta\to 0}\tau^\delta_r\ge \tau_r-\varepsilon$.) The set of locally minimal values of a continuous function $\rho_t$ is at most countable since each such a value $r$ is the minimum of $\rho_t$ over a rational interval. In particular,
\[
\mu_{\mathrm{Leb}}(\{r>0:\rho_t~\text{has~a~local~minimum~at~level~$r$}\})=0,
\]
for each continuous function $t\mapsto \rho_t$ and thus (almost) surely in the context of random processes under consideration. Therefore,
\[
\mathbb{P} [\,\text{the~process~$\rho_t$~has~a~local~minimum~at~level~$r$}\,]=0\ \ \text{for almost all~$r>0$,}
\]
due to the Fubini theorem for the product measure $\mathbb{P}\times \mu_\mathrm{Leb}$, which implies~\eqref{eq:tdr->tr}.}

Let~$t>0$ and assume that~$r>0$ is chosen {according to~\eqref{eq:tdr->tr}} so that, almost surely,~$\tau^\delta_{s,r}:= s\wedge \tau^{\delta}_r\to s\wedge \tau_r$ and hence~$\gamma^\delta[0,n^\delta_{s,r}]\to\gamma[0,s\wedge\tau_r]$ for all~$s\in[0,t]$; see Lemma~\ref{lem:capacity-increments}. Let
\[
v\in B_\Omega(b,\tfrac{1}{2}r).
\]
The martingale property of the discrete observables~\eqref{eq:defMm} 
gives
\begin{equation}
\label{eq:discrete-mart-ts}
\mathbb E\big[M^\delta_{\Od_{t,r}}(v^\delta)f(\gamma^\delta[0,n^\delta_{s,r}])\big]
\  =\ \mathbb E\big[M^\delta_{\Od_{s,r}}(v^\delta)f(\gamma^\delta[0,n^\delta_{s,r}])\big],
\end{equation}
where $f$ is a bounded continuous test function on the space of curves. We now pass to the limit (as~$\delta\to 0$) in this identity using the following two facts:
\begin{itemize}
\item If~$\gamma^\delta[0,n^\delta_{t,r}]\to\gamma[0,t\wedge\tau_r]$, then
\begin{equation}\label{x:Pdef}
M^\delta_{\Od_{t,r}}(v^\delta)\ \to\ P_{\Omega\smallsetminus\gamma[0,t\wedge\tau_r]}(v):=-\frac{1}{\pi}\Im\frac{1}{g_{t\wedge\tau_r}(\phi(v))-\xi_{t\wedge\tau_r}}
\end{equation}
as~$\delta\to 0$. We discuss such convergence results in Section~\ref{sect:convergence} (see Proposition~\ref{prop:M=0conv} for this concrete statement).

\smallskip

\item The martingale observables are uniformly (with respect to~$\delta$ and all possible realisations of~$\gamma^\delta[0,n^\delta_{t,r}]$) bounded. Indeed, Lemma~\ref{lem:ZZ/Z(a,b)<c} implies that
\[
M^\delta_{\Od_{s,r}}(v^\delta)\ =\ \frac{\Z_{\Od_{s,r}}(a^\delta_{s,r},v^\delta)}{\Z_{\Od_{s,r}}(a^\delta_{s,r},b)}\cdot \Z_{\Od}(o^\delta,b)\ \le\ \const\cdot \frac{\Z_{\Od}(o^\delta,b)}{\Z_{\Od_{s,r}}(v^\delta,b)}
\]
with a universal multiplicative constant and
\[
\frac{\Z_{\Od}(o^\delta,b)}{\Z_{\Od_{s,r}}(v^\delta,b)}\ \le\ \frac{\Z_{\Od}(o^\delta,b)}{\Z_{B_{\Od}(b,r)}(v^\delta,b)}\ \to\ \frac{G_\Omega(0,b)}{G_{B_\Omega(b,r)}(v,b)}\ <\ +\infty
\]
as~$\delta\to 0$ due to Corollary~\ref{cor:near-bdry=1} (which allows one to replace~$b$ by an inner point~$b_{\varepsilon r}$ lying close enough to~$b$, cf. the proof of Proposition~\ref{prop:P-close}) and Corollary~\ref{cor:G-conv} (which provides the convergence of Green's functions); {see also~\eqref{eq:h1(b)/h2(b)def} for a discussion of the ratio of two harmonic functions $G_\Omega(0,\cdot)$ and $G_{B_\Omega(b,r)}(v,\cdot)$ at/near the (degenerate) prime end $b$.}
\end{itemize}
Passing to the limit~$\delta\to 0$ in~\eqref{eq:discrete-mart-ts} we are now able to conclude that, for each $r>0$, the (continuous, uniformly bounded) process
\begin{equation}
\label{eq:P-is-mart}
P_{\Omega\smallsetminus\gamma[0,t\wedge\tau_r]}(v)\ \text{is a martingale for each}\  v\in B_\Omega(b,\tfrac{1}{2}r).
\end{equation}

We now claim that the {real-valued} process~$\xi_{t\wedge\tau_r}$ is a continuous local semi-martingale since it can be uniquely reconstructed as a {certain} deterministic function
\[
\biggl(\frac{\Im Z_1-\xi}{|Z_1-\xi|^2}\,,\,\frac{\Im Z_2-\xi}{|Z_2-\xi|^2}\,,\,Z_1\,,\,Z_2\biggr)\ \mapsto\ \xi
\]
of continuous martingales~\eqref{x:Pdef} evaluated at two distinct points~$v_1,v_2\in B_\Omega(b,\tfrac{1}{2}r)$ and differentiable { (complex-valued)} processes~${Z_1:=}g_{t\wedge \tau_r}(\phi(v_1))$, ${Z_2:=}g_{t\wedge \tau_r}(\phi(v_2))$.

{In particular, we can apply the It\^o calculus to observables~\eqref{x:Pdef}.} Using the Loewner equation~\eqref{eq:Loewner} and It\^o's lemma, one gets the following formula:
\begin{align*}
\mathrm{d}P_{\Omega\smallsetminus\gamma[0,t\wedge\tau_r]}(v)\ &=\ -\frac{1}{\pi}\mathrm{d}\Im\frac{1}{g_{t\wedge\tau_r}(\phi(v))-\xi_{t\wedge\tau_r}}\\
&=\ -\frac{1}{\pi}\Im\biggl[\frac{\mathrm{d}\xi_{t\wedge\tau_r}}{(g_{t\wedge\tau_r}(\phi(v))-\xi_{t\wedge\tau_r})^2}+ \frac{\mathrm{d}\langle\xi,\xi\rangle_{t\wedge\tau_r}-2\mathrm{d}(t\wedge\tau_r)}{(g_{t\wedge\tau_r}(\phi(v))-\xi_{t\wedge\tau_r})^3}\biggr]
\end{align*}
(here and below we use the sign~$\mathrm{d}$ for the stochastic differential). As this process should be a martingale \emph{for each}~$v\in B_\Omega(b,\frac{1}{2}r)$, the only possibility is that
\[
\text{both processes}~\xi_{t\wedge\tau_r}~\text{and}~\langle\xi,\xi\rangle_{t\wedge\tau_r}-2\mathrm{d}(t\wedge\tau_r)~\text{are (local) martingales.}
\]
Since~$\tau_r\to+\infty$ almost surely, one concludes that~$\xi_t\mathop{=}\limits^{(d)}\sqrt{2}B_t$ by the L\'evy theorem.

\begin{remark} \label{rem:Pm-mart} The martingale property~\eqref{eq:P-is-mart} can be directly generalized to the massive setup. Namely, for each subsequential limit (in the same topologies as above) of massive LERW on~$(\Od;a^\delta,b^\delta)$ {and each $v\in B_\Omega(b,\tfrac{1}{2}r)$} the following holds:
\begin{equation}
\label{eq:Pm-is-mart}
\text{the process}\ t \mapsto P^{(m)}_{\Omega\smallsetminus\gamma[0,t\wedge\tau_r]}(v)\cdot N^{(m)}_{\Omega\smallsetminus\gamma[0,t\wedge\tau_r]}\ \text{is a martingale},
\end{equation}
{where the \emph{massive Poisson kernels}~$P^{(m)}(\cdot)$ are defined by~\eqref{eq:defPm} and the additional (random) \emph{normalization factors} $N^{(m)}$ are given by~\eqref{eq:defNm}.}
In order to prove~\eqref{eq:Pm-is-mart} one mimics the arguments given above basing upon
\begin{itemize}
\item {the convergence, as $\delta\to 0$, of massive martingale observables~\eqref{eq:defMm}
 to multiplies of massive Poisson kernels~$P^{(m)}_{\Omega\smallsetminus\gamma[0,t\wedge\tau_r]}(\cdot)$; this convergence is provided by Proposition~\ref{prop:Mconv};}

\smallskip

\item the uniform boundedness of massive observables (until time~$t\wedge\tau_r$), which follows from Corollary~\ref{cor:Zm/Z>c} and the uniform boundedness of massless ones.
\end{itemize}
We identify the law of~$\xi_t$ in the massive setup in Section~\ref{sect:computations} using~\eqref{eq:Pm-is-mart} in the same spirit as discussed above in the classical situation; see {Lemma~\ref{lem:dNmt=}.}
\end{remark}

\subsection{The density of mLERW with respect to the classical LERW}\label{sect:density} Given a discrete domain~$(\Od;a^\delta,b^\delta)$ and~$m<\delta^{-1}$, denote by~$\mathbb P_{(\Od;a^\delta,b^\delta)}[\gamma^\delta]$ and $\mathbb P^{(m)}_{(\Od;a^\delta,b^\delta)}[\gamma^\delta]$ the probabilities that a simple lattice path~$\gamma^\delta$ running from~$a^\delta$ to~$b^\delta$ inside~$\Od$ appears as a classical ($m=0$) or a massive LERW trajectory, respectively.

\begin{lemma}\label{lem:ZZ/Z(a,b)<c}
Let~$\Od$ be a simply connected discrete domain, {$a^\delta,b^\delta\in\partial\Od$ (where the boundary $\partial\Od$ of $\Od$ is understood as in~\eqref{x:paO}),}
and~$v^\delta\in\Int\Od$. Then, the following estimate holds:
\[
\frac{\Z_{\Od}(a^\delta,v^\delta)\Z_{\Od}(v^\delta,b^\delta)}{\Z_{\Od}(a^\delta,b^\delta)}\ \le\ \mathrm{const},
\]
with a universal (i.e., independent of~$\Od$, $a^\delta$, $b^\delta$, and~$v^\delta$) constant.
\end{lemma}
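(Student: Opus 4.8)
The plan is to interpret the ratio $\Z_{\Od}(a^\delta,v^\delta)\Z_{\Od}(v^\delta,b^\delta)/\Z_{\Od}(a^\delta,b^\delta)$ as (a constant multiple of) the expected number of visits to $v^\delta$ by a random walk bridge from $a^\delta$ to $b^\delta$ in $\Od$, and to bound this quantity uniformly. More precisely, recall that $\Z_{\Od}(a^\delta,b^\delta)$ is the sum over lattice paths from $a^\delta$ to $b^\delta$ of $(1/4)^{\#\pi^\delta}$, so it is (up to the obvious normalization by boundary edges) $4$ times the Green-function-type quantity $\sum_n \tfrac14\cdot(\tfrac14)^n\cdot\#\{\text{walks }a^\delta\to b^\delta\text{ of length }n+1\}$, i.e. essentially the discrete harmonic measure weight of the edge at $b^\delta$ seen from $a^\delta$. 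Similarly, by the same path-splitting argument used in the proof of Lemma~\ref{lem:Zm(w,z)=Z-ZmZ}, the product $\Z_{\Od}(a^\delta,v^\delta)\Z_{\Od}(v^\delta,b^\delta)$ counts pairs of paths $a^\delta\to v^\delta$ and $v^\delta\to b^\delta$, hence equals the total weight of paths from $a^\delta$ to $b^\delta$ through $v^\delta$, counted with multiplicity equal to the number of visits to $v^\delta$. Therefore the ratio in question is exactly $\mathbb{E}^{\mathrm{bridge}}_{a^\delta\to b^\delta}[\,\#\text{visits of the walk to }v^\delta\,]$, where the bridge measure is the simple random walk from $a^\delta$ conditioned to exit $\Od$ through the boundary edge at $b^\delta$.

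The key step is then to bound this conditional expected number of visits by a universal constant. The natural way is to split according to whether the walk ever reaches $v^\delta$: conditionally on the first visit to $v^\delta$, the future increments of the expected occupation time are governed by $\Z_{\Od}(v^\delta,v^\delta)$-type sums, i.e. by the expected number of returns of a walk started at $v^\delta$ before it exits $\Od$. Using the strong Markov property one writes
\[
\frac{\Z_{\Od}(a^\delta,v^\delta)\Z_{\Od}(v^\delta,b^\delta)}{\Z_{\Od}(a^\delta,b^\delta)}\ \le\ \const\cdot\mathbb{P}^{\mathrm{bridge}}_{a^\delta\to b^\delta}[\,\gamma\text{ visits }v^\delta\,]\cdot G_{\Od}(v^\delta,v^\delta),
\]
where $G_{\Od}(v^\delta,v^\delta)$ is the usual (massless) Green function of simple random walk killed on exiting $\Od$. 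The first factor is a probability, hence at most $1$; the real content is the uniform bound on $G_{\Od}(v^\delta,v^\delta)$ for an interior vertex of \emph{any} subgraph of $\delta\mathbb{Z}^2$. This last bound is standard: since $v^\delta$ has all four neighbours in $\overline\Omega{}^\delta$ and the walk from $v^\delta$ has at least probability $\tfrac14$ of leaving toward any fixed neighbour, after a geometrically bounded number of steps it either exits or has made no progress; a cleaner route is to compare with the full-plane Green function, using that $G_{\Od}(v^\delta,v^\delta)\le G_{\delta\mathbb{Z}^2\setminus\{\text{one neighbour}\}}(v^\delta,v^\delta)$, which is the expected number of visits of planar SRW to the origin before hitting a fixed neighbouring site — a finite universal constant (equal to $1+\pi G_{\mathbb{Z}^2}$-type expression, but all that matters is that it is finite and independent of $\delta$ and $\Od$ by scale invariance of SRW on $\delta\mathbb{Z}^2$).

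Making the first displayed inequality precise requires a little care with the last visit to $v^\delta$ rather than the first, and with the boundary-edge conventions for $\partial\Od$ from~\eqref{x:paO}, so a slightly more symmetric way to organise the argument is: condition on the walk (under the bridge measure) visiting $v^\delta$, decompose its trajectory at the last visit to $v^\delta$ into an excursion $a^\delta\rightsquigarrow v^\delta$ that returns to $v^\delta$ some number of times and a final piece $v^\delta\to b^\delta$ that never returns to $v^\delta$; the expected number of visits is then $1$ plus the expected number of returns before the final departure, and the latter is at most $G_{\Od}(v^\delta,v^\delta)-1$. This gives the bound with $\const = \sup_\delta\sup_{\Od}\sup_{v^\delta} G_{\Od}(v^\delta,v^\delta) < \infty$. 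I expect the main obstacle to be purely bookkeeping: matching the path-counting definition~\eqref{eq:defZm} (with its convention of \emph{not} counting the boundary edges at $a^\delta,b^\delta$) to a clean probabilistic statement about the random walk bridge, and checking that the multiplicity in the product $\Z_{\Od}(a^\delta,v^\delta)\Z_{\Od}(v^\delta,b^\delta)$ is genuinely the occupation time of $v^\delta$ and not something off by the degree or by one visit — none of which is deep, but all of which must be spelled out to keep the constant universal.
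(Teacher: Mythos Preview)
Your identification of the ratio as the expected number of visits to $v^\delta$ by the random-walk bridge is correct, and the decomposition
\[
\frac{\Z_{\Od}(a^\delta,v^\delta)\Z_{\Od}(v^\delta,b^\delta)}{\Z_{\Od}(a^\delta,b^\delta)}\ =\ \mathbb{P}^{\mathrm{bridge}}_{a^\delta\to b^\delta}[\,\text{hit }v^\delta\,]\cdot G_{\Od}(v^\delta,v^\delta)
\]
is also fine (indeed, after the first visit the bridge from $v^\delta$ to $b^\delta$ has expected occupation $\Z_{\Od}(v^\delta,v^\delta)\Z_{\Od}(v^\delta,b^\delta)/\Z_{\Od}(v^\delta,b^\delta)=G_{\Od}(v^\delta,v^\delta)$). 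The gap is in the next step: the claim that $G_{\Od}(v^\delta,v^\delta)$ is a universal constant is \emph{false} in two dimensions. For $v^\delta$ at distance $d$ from $\partial\Od$ one has $G_{\Od}(v^\delta,v^\delta)\asymp \tfrac{1}{2\pi}\log(d/\delta)$, which blows up as $\delta\to 0$. Your comparison $G_{\Od}(v^\delta,v^\delta)\le G_{\delta\mathbb{Z}^2\setminus\{\text{one neighbour}\}}(v^\delta,v^\delta)$ would require $\Od\subset\delta\mathbb{Z}^2\setminus\{\text{neighbour}\}$, i.e.\ that a neighbour of $v^\delta$ lies \emph{outside} $\Od$; for a generic interior vertex this is simply not the case. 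Bounding the hitting probability by $1$ thus throws away a factor of order $\log(d/\delta)$ --- precisely the factor that makes the product bounded.

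The paper handles this differently: it quotes \cite[Proposition~3.1]{chelkak-toolbox}, which shows that the ratio is uniformly comparable to the probability that the bridge hits the ball $B(v^\delta,\tfrac{1}{3}\dist(v^\delta,\partial\Od))$. The point is that one replaces the single vertex $v^\delta$ by a macroscopic ball whose radius scales with the distance to the boundary; the hitting probability of this ball absorbs the logarithmic divergence of $G_{\Od}(v^\delta,v^\delta)$ and is itself trivially bounded by $1$. If you want to salvage your approach, you would need to show directly that $\mathbb{P}^{\mathrm{bridge}}[\text{hit }v^\delta]\le \mathrm{const}/G_{\Od}(v^\delta,v^\delta)$, which is essentially equivalent to the toolbox statement.
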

\begin{proof} E.g., see~\cite[Proposition~3.1]{chelkak-toolbox} which claims that the left-hand side is uniformly comparable to the probability that the random walk trajectory started at~$a^\delta$ and conditioned to exit~$\Od$ at~$b^\delta$ intersects the ball~$B(v^\delta,\frac{1}{3}\mathrm{dist}(v^\delta,\partial\Od))$.
\end{proof}
\begin{proposition} \label{prop:Zm/Z(a,b)>c}
There exists a universal constant~$c_0>0$ such that, for each discrete domain~$\Od\!\subset\!B(0,R)$, boundary points~$a^\delta,b^\delta\!\in\partial\Od$ and \mbox{$m\le\frac{1}{2}\delta^{-1}$,} one has
\begin{equation}
\label{eq:Zm/Z(a,b)>c}
{\Zm_{\Od}(a^\delta,b^\delta)}/{\Z_{\Od}(a^\delta,b^\delta)}\ \ge\ \exp(-c_0m^2R^2),
\end{equation}
where the massive random walk partition function~$\Zm_{\Od}$ is defined by~\eqref{eq:defZm}.
\end{proposition}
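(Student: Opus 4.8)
The plan is to interpret the ratio $\Zm_{\Od}(a^\delta,b^\delta)/\Z_{\Od}(a^\delta,b^\delta)$ as an expectation over random walk bridges and to reduce the statement to a bound on the expected number of steps of such a bridge, which is then supplied by Lemma~\ref{lem:ZZ/Z(a,b)<c}. (If $a^\delta$ and $b^\delta$ are not connected inside $\Od$ both partition functions vanish and there is nothing to prove, so assume $\Z_{\Od}(a^\delta,b^\delta)>0$.) Dividing~\eqref{eq:defZm} by $\Z_{\Od}(a^\delta,b^\delta)$ one sees that
\[
\frac{\Zm_{\Od}(a^\delta,b^\delta)}{\Z_{\Od}(a^\delta,b^\delta)}\ =\ \mathbb{E}\bigl[(1-m^2\delta^2)^{\#\pi^\delta}\bigr],
\]
where $\mathbb{E}$ is the expectation with respect to the probability measure on paths $\pi^\delta\in S_{\Od}(a^\delta;b^\delta)$ proportional to $(\tfrac14)^{\#\pi^\delta}$, i.e.\ the law of the simple random walk from $a^\delta$ conditioned to leave $\Od$ through $b^\delta$. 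Since $m\le\tfrac12\delta^{-1}$ gives $m^2\delta^2\le\tfrac14\le\tfrac12$, I would combine the elementary inequality $(1-x)^n\ge e^{-2nx}$, valid for $x\in[0,\tfrac12]$ (it follows from $-\log(1-x)\le x/(1-x)\le 2x$), with Jensen's inequality for the convex function $u\mapsto e^{-u}$ to obtain
\[
\frac{\Zm_{\Od}(a^\delta,b^\delta)}{\Z_{\Od}(a^\delta,b^\delta)}\ \ge\ \mathbb{E}\bigl[e^{-2m^2\delta^2\#\pi^\delta}\bigr]\ \ge\ \exp\bigl(-2m^2\delta^2\cdot\mathbb{E}[\#\pi^\delta]\bigr).
\]
Everything thus reduces to the estimate $\mathbb{E}[\#\pi^\delta]\le\const\cdot R^2\delta^{-2}$.

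To prove the latter I would decompose the bridge according to its visits to interior vertices. Since $a^\delta,b^\delta\in\partial\Od$, any $\pi^\delta\in S_{\Od}(a^\delta;b^\delta)$ touches $\partial\Od$ only at its two endpoints, so $\#\pi^\delta=\sum_{v^\delta\in\Int\Od}N_{v^\delta}(\pi^\delta)-1$, where $N_{v^\delta}(\pi^\delta)$ is the number of passages of $\pi^\delta$ through $v^\delta$. Splitting $\pi^\delta$ at each such passage into a path from $a^\delta$ to $v^\delta$ and a path from $v^\delta$ to $b^\delta$, and using the additivity of the interior-edge count $\#\,\cdot$ under this operation (exactly as in the proof of Lemma~\ref{lem:Zm(w,z)=Z-ZmZ}), one gets $\sum_{\pi^\delta}N_{v^\delta}(\pi^\delta)(\tfrac14)^{\#\pi^\delta}=\Z_{\Od}(a^\delta,v^\delta)\Z_{\Od}(v^\delta,b^\delta)$, whence
\[
\mathbb{E}[\#\pi^\delta]\ =\ \sum_{v^\delta\in\Int\Od}\frac{\Z_{\Od}(a^\delta,v^\delta)\Z_{\Od}(v^\delta,b^\delta)}{\Z_{\Od}(a^\delta,b^\delta)}\ -\ 1\,.
\]
Now Lemma~\ref{lem:ZZ/Z(a,b)<c} bounds each summand by a universal constant, while $\#\Int\Od\le\#(\delta\mathbb{Z}^2\cap B(0,R))\le\const\cdot R^2\delta^{-2}$ because $\Od\subset B(0,R)$. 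Combining this with the estimates above yields $\Zm_{\Od}(a^\delta,b^\delta)/\Z_{\Od}(a^\delta,b^\delta)\ge\exp(-c_0m^2R^2)$, with $c_0$ equal to twice the constant just obtained.

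I do not expect a serious obstacle here: the real content of the argument is the uniform bound of Lemma~\ref{lem:ZZ/Z(a,b)<c}, which converts the sum over interior vertices into their cardinality, and the only delicate point is the bookkeeping with the edge-counting convention of~\eqref{eq:defZm} — namely the constant $-1$ in the identity $\#\pi^\delta=\sum_{v^\delta}N_{v^\delta}(\pi^\delta)-1$ and the additivity of $\#\,\cdot$ under splitting at an interior vertex — which is of the same routine nature as the computation already carried out in the proof of Lemma~\ref{lem:Zm(w,z)=Z-ZmZ}.
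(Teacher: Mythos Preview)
Your proof is correct and follows essentially the same approach as the paper: both interpret the ratio as an expectation over the conditioned random walk, apply Jensen's inequality, and then use Lemma~\ref{lem:ZZ/Z(a,b)<c} together with the trivial bound $\#\Int\Od\le\const\cdot R^2\delta^{-2}$ to control $\mathbb{E}[\#\pi^\delta]$. The only cosmetic difference is that the paper applies Jensen directly to the convex function $x\mapsto(1-m^2\delta^2)^x$ rather than first passing to the exponential bound $(1-x)^n\ge e^{-2nx}$, but the two routes are equivalent.
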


\begin{proof} By Jensen's inequality,
\[
\frac{\Zm_{\Od}(a^\delta,b^\delta)}{\Z_{\Od}(a^\delta,b^\delta)}\ =\
\mathbb{E}_{\mathrm{SRW}(\Od;a^\delta,b^\delta)}[(1-m^2\delta^2)^{\#\pi^\delta}]\ \geq\ (1-m^2\delta^2)^{\mathbb{E}_{\mathrm{SRW}(\Od;a^\delta,b^\delta)}[\#\pi^\delta]},
\]
where the expectation is taken over {simple random walks (SRW) $\pi^\delta$} 
started at~$a^\delta$ and conditioned to exit~$\Od$ at~$b^\delta$, whereas Lemma~\ref{lem:ZZ/Z(a,b)<c} gives
\[
\mathbb{E}_{\mathrm{SRW}(\Od;a^\delta,b^\delta)}[\#\pi^\delta]+1\ =\ \sum\nolimits_{v^\delta\in\Int\Od} \frac{\Z_{\Od}(a^\delta,v^\delta)\Z_{\Od}(v^\delta,b^\delta)}{\Z_{\Od}(a^\delta,b^\delta)}\,\le\,\mathrm{const}\cdot \delta^{-2}R^2.
\]
The desired uniform estimate~\eqref{eq:Zm/Z(a,b)>c} follows easily.
\end{proof}

\begin{corollary}\label{cor:Dm} Let~${D}^{(m)}_{(\Od;a^\delta,b^\delta)}(\gamma^\delta):= \mathbb{P}^{(m)}_{(\Od;a^\delta,b^\delta)}(\gamma^\delta)/\mathbb{P}_{(\Od;a^\delta,b^\delta)}(\gamma^\delta)$. Then,

\smallskip

\noindent (i)\phantom{i} ${D}^{(m)}_{(\Od;a^\delta,b^\delta)}(\gamma^\delta)\,\le\,\exp(c_0m^2R^2)$, for each simple path~$\gamma^\delta$ from~$a^\delta$ to~$b^\delta$ in~$\Od$;

\smallskip

\noindent (ii) $\mathbb E_{(\Omega^\delta;a^\delta,b^\delta)} \bigl[\,\log{D}^{(m)}_{(\Od;a^\delta,b^\delta)}(\gamma^\delta)\,\bigr]\ge -c_0m^2R^2$, where the expectation is taken over the classical LERW measure~$\mathbb{P}_{(\Od;a^\delta,b^\delta)}$.
\end{corollary}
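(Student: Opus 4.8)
The plan is to derive both parts of Corollary~\ref{cor:Dm} directly from the bound established in Proposition~\ref{prop:Zm/Z(a,b)>c}, by writing the density $D^{(m)}_{(\Od;a^\delta,b^\delta)}(\gamma^\delta)$ in closed form and then applying elementary probabilistic inequalities.

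First I would record the explicit formula for the density. Recall that for a \emph{fixed} simple path $\gamma^\delta$ from $a^\delta$ to $b^\delta$, the LERW probability $\mathbb{P}_{(\Od;a^\delta,b^\delta)}(\gamma^\delta)$ is proportional to the product, over the successive vertices of $\gamma^\delta$, of one-step transition factors, divided by $\Z_{\Od}(a^\delta,b^\delta)$ (the loop-erasure weight, see~\cite[Remark~3.6]{LSW-LERW}); the massive version $\mathbb{P}^{(m)}_{(\Od;a^\delta,b^\delta)}(\gamma^\delta)$ has the same combinatorial structure with the massive transition factors $\tfrac14(1-m^2\delta^2)$ and the massive normalization $\Z^{(m)}_{\Od}(a^\delta,b^\delta)$. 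Taking the ratio, the geometric prefactors $(1-m^2\delta^2)^{\#\gamma^\delta}$ along the path cancel against the corresponding factors hidden in the partition-function normalizations of the complementary domains, and one is left with the clean expression
\[
D^{(m)}_{(\Od;a^\delta,b^\delta)}(\gamma^\delta)\ =\ \frac{\Z_{\Od}(a^\delta,b^\delta)}{\Z^{(m)}_{\Od}(a^\delta,b^\delta)}\cdot\prod_{k}\frac{\Z^{(m)}_{\Od\smallsetminus\gamma^\delta[0,k]}(\gamma^\delta(k),b^\delta)}{\Z_{\Od\smallsetminus\gamma^\delta[0,k]}(\gamma^\delta(k),b^\delta)}\cdot\frac{1}{(1-m^2\delta^2)},
\]
or, more conceptually, $D^{(m)} = M^{(m),\delta}_{\Od}(\cdot)/M^\delta_{\Od}(\cdot)$-type telescoping that collapses all the inner-domain factors and leaves only $\Z_{\Od}(a^\delta,b^\delta)/\Z^{(m)}_{\Od}(a^\delta,b^\delta)$ times a product of massive-over-massless ratios that are each $\le 1$ (since each massive partition function is termwise dominated by the massless one). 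Hence $D^{(m)}_{(\Od;a^\delta,b^\delta)}(\gamma^\delta)\le \Z_{\Od}(a^\delta,b^\delta)/\Z^{(m)}_{\Od}(a^\delta,b^\delta)\le \exp(c_0 m^2 R^2)$ by Proposition~\ref{prop:Zm/Z(a,b)>c}, which is part~(i).

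For part~(ii), I would use that $\mathbb{E}_{(\Od;a^\delta,b^\delta)}[D^{(m)}_{(\Od;a^\delta,b^\delta)}(\gamma^\delta)] = \sum_{\gamma^\delta}\mathbb{P}_{(\Od;a^\delta,b^\delta)}(\gamma^\delta)\cdot\frac{\mathbb{P}^{(m)}(\gamma^\delta)}{\mathbb{P}(\gamma^\delta)} = \sum_{\gamma^\delta}\mathbb{P}^{(m)}(\gamma^\delta) = 1$, i.e. $D^{(m)}$ is a probability density (the expected value of the likelihood ratio under the reference measure equals $1$). Then Jensen's inequality applied to the concave function $\log$ gives
\[
\mathbb{E}_{(\Od;a^\delta,b^\delta)}\bigl[\log D^{(m)}_{(\Od;a^\delta,b^\delta)}(\gamma^\delta)\bigr]\ \le\ \log\mathbb{E}_{(\Od;a^\delta,b^\delta)}\bigl[D^{(m)}_{(\Od;a^\delta,b^\delta)}(\gamma^\delta)\bigr]\ =\ 0,
\]
but for the \emph{lower} bound claimed in~(ii) I would instead argue directly: the quantity $-\mathbb{E}_{(\Od;a^\delta,b^\delta)}[\log D^{(m)}]$ is the Kullback--Leibler divergence $\mathrm{KL}(\mathbb{P}\,\|\,\mathbb{P}^{(m)})$, and one bounds it from above using $\log D^{(m)}(\gamma^\delta)\ge \log\bigl(\Z_{\Od}(a^\delta,b^\delta)/\Z^{(m)}_{\Od}(a^\delta,b^\delta)\bigr) \ge$ nothing useful directly — so instead I use the pathwise identity $\log D^{(m)}(\gamma^\delta) = \log\frac{\Z_{\Od}(a^\delta,b^\delta)}{\Z^{(m)}_{\Od}(a^\delta,b^\delta)} + \sum_k\log\frac{\Z^{(m)}_{\cdot}(\gamma^\delta(k),b^\delta)}{\Z_{\cdot}(\gamma^\delta(k),b^\delta)} + O(m^2\delta^2\cdot\#\gamma^\delta)$ and take expectations: the first term is deterministic and $\ge -c_0 m^2 R^2$ by Proposition~\ref{prop:Zm/Z(a,b)>c} rewritten as $\log(\Z/\Z^{(m)})\le c_0m^2R^2$, hence $\log(\Z^{(m)}/\Z)\ge -c_0m^2R^2$; wait — here I want $\log(\Z/\Z^{(m)})$ which is $\ge 0$, so that term is harmless, and the remaining sum of log-ratios is $\le 0$ termwise, contributing negatively. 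Controlling that negative sum is exactly the delicate point: I would bound $-\sum_k\log\frac{\Z^{(m)}}{\Z}$ in expectation by $c_0 m^2 R^2$ using the same SRW-expected-length estimate (Lemma~\ref{lem:ZZ/Z(a,b)<c}) that underlies Proposition~\ref{prop:Zm/Z(a,b)>c}, now applied to the complementary domains $\Od\smallsetminus\gamma^\delta[0,k]$ and summed using the martingale/tower property to reduce to a single expectation over the whole path.

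The main obstacle is the bookkeeping in part~(ii): one must make sure that the telescoping of partition-function ratios and the passage to the expectation over the LERW measure is done with the correct domains (the connected component of $\Od\smallsetminus\gamma^\delta[0,k]$ containing $b^\delta$) and that the accumulated error terms $O(m^2\delta^2\#\gamma^\delta)$ are genuinely controlled by $c_0 m^2 R^2$, which again rests on $\mathbb{E}[\#\gamma^\delta]\le \mathbb{E}_{\mathrm{SRW}}[\#\pi^\delta]\le \mathrm{const}\cdot\delta^{-2}R^2$. In practice the cleanest route is to avoid the telescoping entirely: simply observe that part~(ii) follows from $\mathbb{E}_{(\Od;a^\delta,b^\delta)}[D^{(m)}]=1$ together with the two-sided bound $c_1\le D^{(m)}\le \exp(c_0m^2R^2)$ — indeed, writing $x = D^{(m)}(\gamma^\delta)$, the elementary inequality $\log x \ge 1 - 1/x \ge -( \exp(c_0m^2R^2) - 1)\cdot\frac{1}{x}$ combined with $\mathbb{E}[1/x \cdot \mathbf{1}] $ does not quite close; so the honest argument is the one above, and I would present it via the KL-divergence identity $\mathrm{KL}(\mathbb{P}^{(m)}\,\|\,\mathbb{P}) = \mathbb{E}_{\mathbb{P}^{(m)}}[\log D^{(m)}] \le c_0 m^2 R^2$ (immediate from part~(i)) together with $\mathrm{KL}(\mathbb{P}\,\|\,\mathbb{P}^{(m)}) = -\mathbb{E}_{\mathbb{P}}[\log D^{(m)}]$ and a symmetrization/Pinsker-type comparison — but the sharpest and simplest is to note that the map $\gamma^\delta\mapsto \log D^{(m)}(\gamma^\delta)$ has, by part~(i), the property that $\mathbb{E}_{\mathbb{P}}[\log D^{(m)}]\ge \mathbb{E}_{\mathbb{P}}[1 - 1/D^{(m)}] = 1 - \mathbb{E}_{\mathbb{P}}[1/D^{(m)}]$ and $\mathbb{E}_{\mathbb{P}}[1/D^{(m)}]\le \exp(c_0m^2R^2)$ would give a weaker bound, so ultimately I expect the referee-clean proof to go through the explicit product formula and Lemma~\ref{lem:ZZ/Z(a,b)<c} exactly as in Proposition~\ref{prop:Zm/Z(a,b)>c}, with~(ii) being a one-line consequence of~$\log\mathbb{E}_{\mathbb{P}}[D^{(m)}]=0$ and Jensen only if we want an \emph{upper} bound, while the stated \emph{lower} bound $-c_0m^2R^2$ is the genuinely new content requiring the length estimate.
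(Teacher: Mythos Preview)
Your part~(i) reaches the correct conclusion but through an unnecessarily baroque route. The paper's formula is simpler: since the LERW law on $\gamma^\delta$ is the pushforward of the (conditioned) SRW law under loop-erasure, one has directly
\[
D^{(m)}_{(\Od;a^\delta,b^\delta)}(\gamma^\delta)\ =\ \frac{\sum_{\pi^\delta:\mathrm{LE}(\pi^\delta)=\gamma^\delta}\bigl(\tfrac14(1-m^2\delta^2)\bigr)^{\#\pi^\delta}}{\sum_{\pi^\delta:\mathrm{LE}(\pi^\delta)=\gamma^\delta}\bigl(\tfrac14\bigr)^{\#\pi^\delta}}\cdot\frac{\Z_{\Od}(a^\delta,b^\delta)}{\Zm_{\Od}(a^\delta,b^\delta)}\,,
\]
and the first factor is $\le 1$ termwise. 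No telescoping over slit domains is needed.

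Your part~(ii) is not a proof: you cycle through Jensen in the wrong direction, KL divergence, the inequality $\log x\ge 1-1/x$, and a telescoping sum without closing any of them. The missing observation is structural. The first factor displayed above is exactly the conditional expectation
\[
\mathbb{E}_{\mathrm{SRW}(\Od;a^\delta,b^\delta)}\bigl[(1-m^2\delta^2)^{\#\pi^\delta}\,\big|\,\mathrm{LE}(\pi^\delta)=\gamma^\delta\bigr],
\]
so Jensen applied \emph{inside} the logarithm (to this inner conditional expectation, not to the outer LERW expectation) gives
\[
\log D^{(m)}(\gamma^\delta)\ \ge\ \log(1-m^2\delta^2)\cdot\mathbb{E}_{\mathrm{SRW}}\bigl[\#\pi^\delta\mid\mathrm{LE}(\pi^\delta)=\gamma^\delta\bigr],
\]
the term $\log(\Z/\Zm)\ge 0$ having been dropped. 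Now take $\mathbb{E}_{(\Od;a^\delta,b^\delta)}$ of both sides: by the tower property the right-hand side becomes $\log(1-m^2\delta^2)\cdot\mathbb{E}_{\mathrm{SRW}(\Od;a^\delta,b^\delta)}[\#\pi^\delta]$, since the LERW law on $\gamma^\delta$ is precisely the SRW-pushforward law on $\mathrm{LE}(\pi^\delta)$. This reduces~(ii) to the same length bound $\mathbb{E}_{\mathrm{SRW}}[\#\pi^\delta]\le\mathrm{const}\cdot\delta^{-2}R^2$ already used in Proposition~\ref{prop:Zm/Z(a,b)>c}. Your telescoping approach could in principle be pushed through, but it would require controlling a sum of $\log(\Zm/\Z)$ terms over all slit domains, which is strictly harder than this two-line argument.
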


\begin{proof} (i) By definition,
\[
{D}^{(m)}_{(\Od;a^\delta,b^\delta)}(\gamma^\delta)\ =\ \frac{\sum_{\pi^\delta:\mathrm{LE}(\pi^\delta)=\gamma^\delta}(\frac{1}{4}(1\!-\!m^2\delta^2))^{\#\pi^\delta}} {\sum_{\pi^\delta:\mathrm{LE}(\pi^\delta)=\gamma^\delta}(\frac{1}{4})^{\#\pi^\delta}}\,\cdot\, \frac{\Z_{\Od}(a^{\delta};b^\delta)}{\Zm_{\Od}(a^{\delta};b^\delta)},
\]
where~$\mathrm{LE}$ denotes the loop-erasure procedure applied to the simple random walk trajectory~$\pi^\delta$. The estimate~\eqref{eq:Zm/Z(a,b)>c} gives the desired uniform upper bound.

\smallskip

\noindent (ii) By Jensen's inequality and since~$\Z_{\Od}(a^\delta,b^\delta)/\Zm_{\Od}(a^\delta,b^\delta)\ge 1$, one has
\[
\mathbb E_{(\Omega^\delta;a^\delta,b^\delta)} \bigl[\,\log{D}^{(m)}_{(\Od;a^\delta,b^\delta)}(\gamma^\delta)\,\bigr]\ \ge\ \log(1\!-\!m^2\delta^2)\cdot \mathbb{E}_{\mathrm{SRW}(\Od;a^\delta,b^\delta)}[\#\pi^\delta],
\]
where the first expectation is taken with respect to the LERW measure while the second is with respect to the \emph{simple} random walk measure on the set~$S_{\Od}(a^\delta,b^\delta)$. The proof is completed in the same way as the proof of Proposition~\ref{prop:Zm/Z(a,b)>c}.
\end{proof}

Below we also need the following extension of Lemma~\ref{lem:ZZ/Z(a,b)<c} and Proposition~\ref{prop:Zm/Z(a,b)>c}.
\begin{lemma}\label{lem:ZZ/Z<cG}
Let $\Od$ be a discrete domain, $z^\delta,w^\delta\in\overline{\Omega}{}^\delta$ and~$v^\delta\in\Int\Od$. Then,
\begin{equation}
\label{eq:ZZ/Z<cG}
\frac{\Z_{\Od}(w^\delta,v^\delta)\Z_{\Od}(v^\delta,z^\delta)}{\Z_{\Od}(w^\delta,z^\delta)}\ \le\ \mathrm{const}\cdot(1+\Z_{\Od}(w^\delta,v^\delta)+\Z_{\Od}(v^\delta,z^\delta)),
\end{equation}
with a universal (i.e., independent of~$\Od$, $w^\delta$, $z^\delta$, and~$v^\delta$) constant.
\end{lemma}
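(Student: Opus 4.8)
The plan is to use only the classical random walk ($m=0$), for which $\Z_{\Od}(w^\delta,z^\delta)=G_{\Od}(w^\delta,z^\delta)$ is the Green's function of the walk killed on $\partial\Od$ (the convention of~\eqref{eq:defZm} of not counting the edges at boundary vertices merely inserts a factor $\tfrac14$ at a boundary endpoint). The cases $w^\delta=v^\delta$ or $z^\delta=v^\delta$ are immediate, so assume $w^\delta,z^\delta\ne v^\delta$. First I would record the elementary bound
\[
\frac{\Z_{\Od}(w^\delta,v^\delta)\,\Z_{\Od}(v^\delta,z^\delta)}{\Z_{\Od}(w^\delta,z^\delta)}\ \le\ \Z_{\Od}(v^\delta,v^\delta),
\]
obtained as in the proof of Lemma~\ref{lem:Zm(w,z)=Z-ZmZ}: decomposing a path from $w^\delta$ to $z^\delta$ at its last visit to $v^\delta$ gives $\Z_{\Od}(w^\delta,z^\delta)\ge\Z_{\Od}(w^\delta,v^\delta)\cdot\widehat{\Z}$, where $\widehat{\Z}$ is the sum over paths from $v^\delta$ to $z^\delta$ that do not return to $v^\delta$, while $\Z_{\Od}(v^\delta,z^\delta)=\Z_{\Od}(v^\delta,v^\delta)\cdot\widehat{\Z}$. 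Since $\Z_{\Od}(v^\delta,v^\delta)$ is bounded by a universal constant whenever $\dist(v^\delta,\partial\Od)$ is comparable to $\delta$ (the walk from $v^\delta$ then returns to $v^\delta$ with probability bounded away from $1$), this disposes of that regime, and we may assume $d_v:=\dist(v^\delta,\partial\Od)\gg\delta$ from now on.

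For $d_v\gg\delta$ I would run a short case analysis, assuming by symmetry $|z^\delta-v^\delta|\le|w^\delta-v^\delta|$, and using two standard facts about planar random walk (both available from~\cite{chelkak-toolbox}): \emph{(a)} $\Z_{\Od}(a^\delta,b^\delta)\asymp 1$ whenever $|a^\delta-b^\delta|$ is much larger than $\delta$ and comparable to $\dist(b^\delta,\partial\Od)$ (a consequence of the Koebe distortion theorem applied to $\Od$ and of the comparison between discrete and continuous Green's functions); and \emph{(b)} the Harnack inequality along round circles centred at $v^\delta$ of radius comparable to $d_v$ that keep a definite distance from $v^\delta$ and from $\partial\Od$. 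The main mechanism is visible in the case $|z^\delta-v^\delta|,|w^\delta-v^\delta|\ge\tfrac13 d_v$: taking $B:=B_{\mathbb C}(v^\delta,\tfrac16 d_v)$ and decomposing walks at their first visit to the circle $\partial B$ gives the \emph{exact} identity $\Z_{\Od}(w^\delta,v^\delta)=\sum_{u^\delta\in\partial B}\mathrm{hm}_{w^\delta}(u^\delta)\,\Z_{\Od}(u^\delta,v^\delta)$ (all visits to the centre $v^\delta$ come after $\partial B$ is reached) together with the inequality $\Z_{\Od}(w^\delta,z^\delta)\ge\sum_{u^\delta\in\partial B}\mathrm{hm}_{w^\delta}(u^\delta)\,\Z_{\Od}(u^\delta,z^\delta)$ (the walk must cross $\partial B$ to visit $v^\delta$), where $\mathrm{hm}_{w^\delta}(u^\delta)$ is the probability that the walk from $w^\delta$ reaches $\partial B$ for the first time at $u^\delta$ without touching $\partial\Od$; since $\Z_{\Od}(u^\delta,v^\delta)\asymp1$ by (a) and $\Z_{\Od}(u^\delta,z^\delta)\asymp\Z_{\Od}(v^\delta,z^\delta)$ by (b), the common factors $\sum_{u^\delta}\mathrm{hm}_{w^\delta}(u^\delta)$ and $\Z_{\Od}(v^\delta,z^\delta)$ cancel in~\eqref{eq:ZZ/Z<cG}, leaving it $\asymp1$.

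The two remaining configurations are handled similarly. If $|z^\delta-v^\delta|<\tfrac13 d_v$ but $|w^\delta-v^\delta|\ge\tfrac23 d_v$, one repeats the above with $B$ enlarged to $B_{\mathbb C}(v^\delta,\tfrac12 d_v)$ (which now contains $z^\delta$ well inside it), getting the exact identities $\Z_{\Od}(w^\delta,z^\delta)=\sum_{u^\delta}\mathrm{hm}_{w^\delta}(u^\delta)\,\Z_{\Od}(u^\delta,z^\delta)$ and $\Z_{\Od}(w^\delta,v^\delta)=\sum_{u^\delta}\mathrm{hm}_{w^\delta}(u^\delta)\,\Z_{\Od}(u^\delta,v^\delta)$, whence by (a) and (b) both reduce to $(\sum_{u^\delta}\mathrm{hm}_{w^\delta}(u^\delta))$ times, respectively, $\Z_{\Od}(u_0^\delta,z^\delta)\asymp1$ (since $|u_0^\delta-z^\delta|\asymp d_v\asymp\dist(z^\delta,\partial\Od)$) and $1$; hence~\eqref{eq:ZZ/Z<cG} is $\asymp\Z_{\Od}(v^\delta,z^\delta)\le 1+\Z_{\Od}(w^\delta,v^\delta)+\Z_{\Od}(v^\delta,z^\delta)$. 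If $|w^\delta-v^\delta|<\tfrac23 d_v$, then both $w^\delta$ and $z^\delta$ lie at distance $\asymp d_v$ from $\partial\Od$ and at mutual distances $<d_v$; here the Koebe bound gives $\Z_{\Od}(a^\delta,b^\delta)\lesssim 1+\log^+(d_v/|a^\delta-b^\delta|)$ for all three pairs, together with the elementary matching lower bound for $\Z_{\Od}(w^\delta,z^\delta)$ obtained by restricting to walks inside $B_{\mathbb C}(v^\delta,d_v)\subseteq\Od$, and~\eqref{eq:ZZ/Z<cG} follows from the triangle inequality $|w^\delta-z^\delta|\le|w^\delta-v^\delta|+|v^\delta-z^\delta|$, just as in the massless computations of Section~\ref{sect:conv_m=0}.

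I expect the genuinely delicate point to be the ``far'' case (the one made explicit above) when $v^\delta$ happens to lie very close to $\partial\Od$: there the crude bound $\Z_{\Od}(v^\delta,v^\delta)\asymp\log(d_v/\delta)$ is useless, so one must see directly that $\Z_{\Od}(w^\delta,v^\delta)$ and $\Z_{\Od}(v^\delta,z^\delta)$ are themselves small, of harmonic-measure type, \emph{and} that the potentially large $\Z_{\Od}(v^\delta,v^\delta)$ never survives the ratio~\eqref{eq:ZZ/Z<cG} because $\Z_{\Od}(w^\delta,v^\delta)$ and $\Z_{\Od}(w^\delta,z^\delta)$ carry the same $O(1)$ ``boundary factor'' $\Z_{\Od}(\cdot,v^\delta)$ along $\partial B$. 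The only real bookkeeping is to keep the (three or four) comparable radii organized so that all the Harnack comparisons used above are legitimate in each sub-configuration.
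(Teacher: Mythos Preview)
Your approach is essentially correct but takes a different (and longer) route than the paper. The paper's key observation in the ``both far'' regime $|w^\delta-v^\delta|,|z^\delta-v^\delta|\ge\tfrac13 d_{\Od}(v^\delta)$ is that the left-hand side of~\eqref{eq:ZZ/Z<cG}, viewed as a function of~$w^\delta$ (resp.~$z^\delta$), is a constant multiple of a ratio of positive discrete harmonic functions and therefore satisfies the maximum principle. Its maximum over this region is thus attained either on~$\partial\Od$ or on the sphere $\{|w^\delta-v^\delta|=\tfrac13 d_{\Od}(v^\delta)\}$; iterating in the two variables reduces at once to the boundary--boundary case (where Lemma~\ref{lem:ZZ/Z(a,b)<c} already gives the bound~$O(1)$) or to the ``one point close to~$v^\delta$'' case handled first. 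No first-passage decompositions or explicit Green-function asymptotics are needed.

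Your argument instead carries out the estimation by hand via harmonic-measure decompositions at a circle~$\partial B$ and Harnack comparisons; it is self-contained (in particular it never invokes Lemma~\ref{lem:ZZ/Z(a,b)<c}) but noticeably longer, and the ``all close'' sub-case $|w^\delta-v^\delta|<\tfrac23 d_v$ needs a bit more care than you indicate: when $\log(d_v/|w^\delta-v^\delta|)$ is merely of order~$1$ the lower bound $\Z_{\Od}(w^\delta,z^\delta)\gtrsim\log(d_v/|w^\delta-z^\delta|)-C$ can become vacuous, and one has to fall back on the fact that in that regime all three Green's functions are already uniformly~$\asymp 1$. What your approach buys is independence from the boundary input Lemma~\ref{lem:ZZ/Z(a,b)<c} (which in turn rests on cross-ratio and extremal-length machinery from~\cite{chelkak-toolbox}); what the paper's approach buys is a two-line reduction that avoids all the intermediate case analysis.
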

\begin{proof} Denote~$d_{\Od}(v^\delta):=\dist(v^\delta,\partial\Od)$. Standard estimates imply that
\[
\Z_{\Od}(w^\delta,v^\delta)\ \le\ \const\cdot\Z_{\Od}(w^\delta,z^\delta)\ \ \text{if}\ \ |z^\delta-v^\delta|\le \tfrac{1}{3}d_{\Od}(v^\delta)~\text{and}~|z^\delta-v^\delta|\le|w^\delta-v^\delta|.
\]
(Indeed, if~$|w^\delta\!-\!v^\delta|\ge\tfrac{2}{3}d_{\Od}(v^\delta)$, then both sides are comparable due to the Harnack principle, otherwise one has~$\Z_{\Od}(w^\delta,v^\delta)\le\const\cdot \Z_{\Od}(z^\delta,v^\delta)\le\const\cdot \Z_{\Od}(z^\delta,w^\delta)$). In particular, this proves the desired estimate in the situation when~$z^\delta$ (or, similarly,~$w^\delta$) is within~$\frac{1}{3}d_{\Od}(v^\delta)$ distance from~$v^\delta$.

To handle the case when both~$w^\delta$ and~$z^\delta$ are at least~$\frac{1}{3}d_{\Od}(v^\delta)$ apart from~$v^\delta$, note that {the ratio of two positive discrete harmonic functions satisfies the maximum principle: if the inequality $H_1\le CH_2$ holds at all neighbors of a given vertex, then it also holds at this vertex since the function $H_1-CH_2$ is discrete harmonic.

Therefore,} the left-hand side of~\eqref{eq:ZZ/Z<cG} satisfies the maximum principle in both variables~$w^\delta$ and~$z^\delta$; is uniformly bounded due to Lemma~\ref{lem:ZZ/Z(a,b)<c} if both~$w^\delta,z^\delta\in\partial\Od$; and is also uniformly bounded if at least one of these two vertices is at distance~$\frac{1}{3}d_{\Od}(v^\delta)$ from~$v^\delta$ due to the argument given above.
\end{proof}

\begin{corollary}\label{cor:Zm/Z>c} There exists a universal constant~$c_0>0$ such that, for each discrete domain~$\Od\subset B(0,R)$, two vertices~$w^\delta,z^\delta\in\overline\Omega{}^\delta$ and~$m\le\frac{1}{2}\delta^{-1}$, one has
\[
{\Zm_{\Od}(w^\delta,z^\delta)}/{\Z_{\Od}(w^\delta,z^\delta)}\ \ge\ \exp(-c_0 m^2R^2).
\]
\end{corollary}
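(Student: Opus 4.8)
The plan is to repeat the proof of Proposition~\ref{prop:Zm/Z(a,b)>c} almost verbatim, with Lemma~\ref{lem:ZZ/Z<cG} in place of Lemma~\ref{lem:ZZ/Z(a,b)<c}. Let $\mu$ be the probability measure on the set $S_{\Od}(w^\delta;z^\delta)$ of lattice paths defined by $\mu(\pi^\delta)=(\tfrac14)^{\#\pi^\delta}/\Z_{\Od}(w^\delta,z^\delta)$; for boundary endpoints this is the simple random walk measure used in Proposition~\ref{prop:Zm/Z(a,b)>c}. The definition~\eqref{eq:defZm} together with Jensen's inequality applied to the convex function $x\mapsto(1-m^2\delta^2)^x$ gives
\[
\Zm_{\Od}(w^\delta,z^\delta)/\Z_{\Od}(w^\delta,z^\delta)\ =\ \mathbb{E}_\mu\bigl[(1-m^2\delta^2)^{\#\pi^\delta}\bigr]\ \ge\ (1-m^2\delta^2)^{\mathbb{E}_\mu[\#\pi^\delta]},
\]
so it suffices to bound $\mathbb{E}_\mu[\#\pi^\delta]$ from above by $\const\cdot\delta^{-2}R^2$.

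To this end, the path-splitting computation in the proof of Lemma~\ref{lem:Zm(w,z)=Z-ZmZ}, specialized to $m=0$, reads $\sum_{v^\delta\in\Int\Od}\Z_{\Od}(w^\delta,v^\delta)\Z_{\Od}(v^\delta,z^\delta)=\sum_{\pi^\delta}(\#\pi^\delta+1)(\tfrac14)^{\#\pi^\delta}$, and therefore
\[
\mathbb{E}_\mu[\#\pi^\delta]+1\ =\ \frac{\sum_{v^\delta\in\Int\Od}\Z_{\Od}(w^\delta,v^\delta)\Z_{\Od}(v^\delta,z^\delta)}{\Z_{\Od}(w^\delta,z^\delta)}\ \le\ \const\sum_{v^\delta\in\Int\Od}\bigl(1+\Z_{\Od}(w^\delta,v^\delta)+\Z_{\Od}(v^\delta,z^\delta)\bigr),
\]
the last step being Lemma~\ref{lem:ZZ/Z<cG} applied term by term. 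Since $\Od\subset B(0,R)$, the number of vertices in $\Int\Od$ is at most $\const\cdot\delta^{-2}R^2$; and $\sum_{v^\delta\in\Int\Od}\Z_{\Od}(w^\delta,v^\delta)$ equals, up to the endpoint conventions of~\eqref{eq:defZm}, the expected number of steps a simple random walk from $w^\delta$ makes before leaving $\Od$, which is $\le\const\cdot\delta^{-2}R^2$ by optional stopping applied to the martingale $n\mapsto|X_n|^2-n\delta^2$ (and the same bound holds for $\sum_{v^\delta\in\Int\Od}\Z_{\Od}(v^\delta,z^\delta)$). Combining the three estimates yields $\mathbb{E}_\mu[\#\pi^\delta]\le\const\cdot\delta^{-2}R^2$.

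Feeding this back into the displayed Jensen bound and using that $\log(1-m^2\delta^2)\ge-\const\cdot m^2\delta^2$ whenever $m^2\delta^2\le\tfrac14$, i.e.\ whenever $m\le\tfrac12\delta^{-1}$, we obtain $\Zm_{\Od}(w^\delta,z^\delta)/\Z_{\Od}(w^\delta,z^\delta)\ge(1-m^2\delta^2)^{\const\cdot\delta^{-2}R^2}\ge\exp(-c_0m^2R^2)$ with a universal constant $c_0$, which is the assertion. I do not anticipate a genuine difficulty here: the only ingredient absent from the boundary-to-boundary case of Proposition~\ref{prop:Zm/Z(a,b)>c} is the two extra additive terms $\Z_{\Od}(w^\delta,v^\delta)$ and $\Z_{\Od}(v^\delta,z^\delta)$ produced by Lemma~\ref{lem:ZZ/Z<cG}, and these are disposed of by the elementary expected-exit-time estimate above.
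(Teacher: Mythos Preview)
Your proof is correct and follows essentially the same route as the paper's own proof: Jensen's inequality reduces the question to bounding $\mathbb{E}_\mu[\#\pi^\delta]$, Lemma~\ref{lem:ZZ/Z<cG} converts the sum $\sum_{v^\delta}\Z(w^\delta,v^\delta)\Z(v^\delta,z^\delta)/\Z(w^\delta,z^\delta)$ into $\const\cdot\sum_{v^\delta}(1+\Z(w^\delta,v^\delta)+\Z(v^\delta,z^\delta))$, and the latter is $O(\delta^{-2}R^2)$. The paper simply cites ``standard estimates of the discrete Green functions'' for this last step, whereas you spell out the expected-exit-time argument via optional stopping of $|X_n|^2-n\delta^2$; otherwise the arguments coincide.
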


\begin{proof} The proof mimics the proof of Proposition~\ref{prop:Zm/Z(a,b)>c}. Indeed, one has
\[
\mathbb E_{\mathrm{SRW}(\Od;z^\delta,w^\delta)}[\,\#\pi^\delta\,]\le\const\ \cdot\!\!\!\!\sum_{v^\delta\in\Int\Od}\!\!(1+\Z_{\Od}(w^\delta,v^\delta)+\Z_{\Od}(v^\delta,z^\delta))\le \const\cdot \delta^{-2}R^2
\]
due to Lemma~\ref{lem:ZZ/Z<cG} and standard estimates of the discrete Green functions.
\end{proof}

\subsection{Absolute continuity of mSLE(2) with respect to SLE(2)}\label{sect:abs-cont}
As discussed in Section~\ref{sect:topologies}, the classical LERW probability measures~$\mathbb{P}_{(\Od;a^\delta,b^\delta)}$ on curves in discrete approximations~$(\Od;a^\delta,{b^\delta})$ are tight. Moreover (see Section~\ref{sect:conv_m=0}), the only possible weak limit of $\mathbb P_{(\Od;a^\delta,b^\delta)}$, as~$\delta\to 0$, is given by the SLE(2) measure on curves in~$(\Omega;a,b)$, which we denote by~$\mathbb P_{(\Omega;a,b)}$. Due to Corollary~\ref{cor:Dm}(i), the densities
\[
{D}^{(m)}_{(\Od;a^\delta,b^\delta)}(\gamma^\delta)\ =\ \mathbb P^{(m)}_{(\Od;a^\delta,b^\delta)}(\gamma^\delta)/\mathbb P_{(\Od;a^\delta,b^\delta)}(\gamma^\delta)
\]
of the massive LERW measures on curves in~$(\Od;a^\delta,b^\delta)$ with respect to the classical ones are uniformly bounded from above by~$\exp(c_0m^2R^2)$. Therefore, the measures~$\mathbb P^{(m)}_{(\Od;a^\delta,b^\delta)}$ are also tight in the topologies discussed in Section~\ref{sect:topologies}.

\begin{lemma}\label{lem:density}
(i) Each subsequential weak limit~$\mathbb P^{(m)}_{(\Omega;a,b)}$ of the massive LERW measures $\mathbb P^{(m)}_{(\Od;a^\delta,b^\delta)}$ is absolutely continuous with respect to the SLE(2) measure~$\mathbb P_{(\Omega;a,b)}$.
The Radon--Nikodym derivative ${D}^{(m)}_{(\Omega;a,b)}:=d\mathbb P^{(m)}_{(\Omega;a,b)}/d\mathbb P_{(\Omega;a,b)}$ is (almost surely) bounded from above by~$\exp(c_0m^2R^2)$, with the same constant~$c_0$ as in Corollary~\ref{cor:Dm}.

\smallskip

\noindent (ii) Moreover, one has $\mathbb E_{(\Omega;a,b)}[\,\log {D}^{(m)}_{(\Omega;a,b)}\,]\ge -c_0m^2R^2$. In~particular, the measures $\mathbb P^{(m)}_{(\Omega;a,b)}$ and~$\mathbb P_{(\Omega;a,b)}$ are mutually absolutely continuous.
\end{lemma}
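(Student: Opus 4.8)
The plan is to derive both parts of the lemma from the uniform estimates in Corollary~\ref{cor:Dm} together with standard weak-convergence arguments and the Skorokhod representation. For part~(i), fix a subsequence $\delta_k\to 0$ along which $\mathbb{P}^{(m)}_{(\Od;a^\delta,b^\delta)}\to\mathbb{P}^{(m)}_{(\Omega;a,b)}$ weakly in the metric space of curves (up to reparametrization) discussed in Section~\ref{sect:topologies}; by passing to a further subsequence we may assume the classical measures $\mathbb{P}_{(\Od;a^\delta,b^\delta)}$ converge as well, necessarily to $\mathbb{P}_{(\Omega;a,b)}$ by Section~\ref{sect:conv_m=0}. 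By Skorokhod's representation theorem, realize all these measures on a common probability space so that $\gamma^{\delta_k}\to\gamma$ almost surely; then the pushforward of the pair $(\gamma^{\delta_k},D^{(m)}_{(\Od;a^\delta,b^\delta)}(\gamma^{\delta_k}))$ is tight in the product space (the second coordinate lives in the compact interval $[0,\exp(c_0m^2R^2)]$ by Corollary~\ref{cor:Dm}(i)). For any bounded continuous test function $f$ on curves,
\[
\mathbb{E}_{(\Omega;a,b)}\bigl[f(\gamma)\cdot D\bigr]\ =\ \lim_k\mathbb{E}_{(\Od;a^\delta,b^\delta)}\bigl[f(\gamma^{\delta_k})D^{(m)}_{(\Od;a^\delta,b^\delta)}(\gamma^{\delta_k})\bigr]\ =\ \lim_k\mathbb{E}^{(m)}_{(\Od;a^\delta,b^\delta)}\bigl[f(\gamma^{\delta_k})\bigr]\ =\ \mathbb{E}^{(m)}_{(\Omega;a,b)}[f(\gamma)],
\]
where $D$ denotes any subsequential limit (in the above coupling) of $D^{(m)}_{(\Od;a^\delta,b^\delta)}(\gamma^{\delta_k})$; such a limit exists along a further subsequence by tightness, and it may be taken to be a measurable function of $\gamma$ after conditioning on $\gamma$ (equivalently, replace $D$ by $\mathbb{E}[D\mid\gamma]$, which does not change the displayed identity since $f$ depends only on $\gamma$). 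This identity, valid for all bounded continuous $f$, says precisely that $\mathbb{P}^{(m)}_{(\Omega;a,b)}$ is absolutely continuous with respect to $\mathbb{P}_{(\Omega;a,b)}$ with Radon--Nikodym derivative $D=D^{(m)}_{(\Omega;a,b)}$, and the bound $D\le\exp(c_0m^2R^2)$ passes to the limit.

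For part~(ii), I would like to pass to the limit in the inequality of Corollary~\ref{cor:Dm}(ii), namely $\mathbb{E}_{(\Od;a^\delta,b^\delta)}[\log D^{(m)}_{(\Od;a^\delta,b^\delta)}(\gamma^\delta)]\ge -c_0m^2R^2$. The function $x\mapsto\log x$ is continuous and bounded above on $(0,\exp(c_0m^2R^2)]$ but unbounded below near $0$, so one cannot invoke the convergence-of-expectations theorem for bounded continuous functionals directly. The key point is that $x\mapsto-\log x$ is a nonnegative \emph{lower semicontinuous} function on $[0,\exp(c_0m^2R^2)]$ (with value $+\infty$ at $0$), hence by the portmanteau theorem applied to the weakly convergent laws of $D^{(m)}_{(\Od;a^\delta,b^\delta)}(\gamma^{\delta_k})$ on this compact interval,
\[
\mathbb{E}_{(\Omega;a,b)}[-\log D^{(m)}_{(\Omega;a,b)}]\ \le\ \liminf_k\mathbb{E}_{(\Od;a^\delta,b^\delta)}[-\log D^{(m)}_{(\Od;a^\delta,b^\delta)}(\gamma^{\delta_k})]\ \le\ c_0m^2R^2,
\]
which is exactly the claimed inequality. (Here I use that the law of the limit variable $D^{(m)}_{(\Omega;a,b)}(\gamma)$ under $\mathbb{P}_{(\Omega;a,b)}$ is the weak limit of the laws of $D^{(m)}_{(\Od;a^\delta,b^\delta)}(\gamma^{\delta_k})$ under $\mathbb{P}_{(\Od;a^\delta,b^\delta)}$, which follows from the coupling above.) Finally, $\mathbb{E}_{(\Omega;a,b)}[\log D^{(m)}_{(\Omega;a,b)}]>-\infty$ forces $D^{(m)}_{(\Omega;a,b)}>0$ almost surely, so $\mathbb{P}_{(\Omega;a,b)}$ is also absolutely continuous with respect to $\mathbb{P}^{(m)}_{(\Omega;a,b)}$, giving mutual absolute continuity.

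The main obstacle, and the step deserving the most care, is the identification in part~(i) of the subsequential limit $D$ of the discrete densities as a genuine function of $\gamma$ (i.e.\ that the limiting Radon--Nikodym derivative really is measurable with respect to the limiting curve and does not secretly depend on extra randomness from the coupling). This is handled, as indicated above, by replacing $D$ with its conditional expectation given $\gamma$ — which is legitimate precisely because every test functional in the weak-convergence identity depends on $\gamma^{\delta}$ only, not on $D^{(m)}_{(\Od;a^\delta,b^\delta)}$ separately — so that no information is lost. One should also remark that a~priori different subsequences could produce different limit measures $\mathbb{P}^{(m)}_{(\Omega;a,b)}$; the lemma only asserts absolute continuity of \emph{each} such limit, and the uniqueness of the limit (hence genuine convergence of mLERW) is established later, in the proof of Theorem~\ref{theorem}, via the analysis of the driving process.
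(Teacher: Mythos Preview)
Your approach differs from the paper's in both parts. For~(i), the paper gives a direct set-theoretic argument: approximating any Borel set~$A$ by a continuity set~$F^\varepsilon$ (via inner regularity and an open neighborhood of a compact subset), one passes to the limit in the inequality $\mathbb P^{(m)}_{\Omega^\delta}[F^\varepsilon]\le C\,\mathbb P_{\Omega^\delta}[F^\varepsilon]$ to obtain $\mathbb P^{(m)}_\Omega[A]\le C\,\mathbb P_\Omega[A]$ for all~$A$, which is exactly absolute continuity with density bounded by~$C$. Your Skorokhod-coupling route through tightness of the pairs $(\gamma^{\delta_k},D^{(m)}_{\Omega^{\delta_k}}(\gamma^{\delta_k}))$ and conditioning on~$\gamma$ is correct but heavier; the paper never needs to name a limit of the discrete densities.

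For~(ii) there is a gap. You assert that the law of~$D^{(m)}_{(\Omega;a,b)}$ under~$\mathbb P_{(\Omega;a,b)}$ is the weak limit of the laws of the discrete densities, but this does not follow from your coupling: the second marginal of the subsequential limit of pairs is the law of the \emph{un-conditioned} variable~$D$, whereas in~(i) you identified $D^{(m)}_{(\Omega;a,b)}=\mathbb E[D\mid\gamma]$, and taking a conditional expectation generally changes the law. The repair is short: by conditional Jensen for the convex function~$-\log$, one has $\mathbb E[-\log\mathbb E[D\mid\gamma]]\le\mathbb E[-\log D]$, and then your portmanteau/lower-semicontinuity bound applies to~$D$ rather than to~$D^{(m)}_{(\Omega;a,b)}$. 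The paper takes a different path altogether: it uses the variational identity
\[
\mathbb E_\Omega\bigl[\log D^{(m)}_\Omega\bigr]\ =\ \inf_{(A_k)}\ \sum_k\mathbb P_\Omega[A_k]\log\frac{\mathbb P^{(m)}_\Omega[A_k]}{\mathbb P_\Omega[A_k]}
\]
over finite measurable partitions, approximates each~$A_k$ by a disjoint continuity set~$F_k^\varepsilon$, and passes the discrete bound $\mathbb E_{\Omega^\delta}[\log D^{(m)}_{\Omega^\delta}]\ge -c_0m^2R^2$ through the limit term by term. This avoids any coupling and any identification of a limiting density variable.
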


\begin{proof} Denote~$C:=\exp(c_0m^2R^2)$. Both results can be easily deduced from Corollary~\ref{cor:Dm} by passing to the limit~$\delta\to 0$. As  probability measures on metrizable spaces are always regular, each Borel set~$A$ can be approximated by a compact subset~$F\subset A$. In its turn, $F$ can be approximated by its open \mbox{$\varepsilon$-neighborhood}~$F^\varepsilon$ that can be without loss of generality assumed to be a continuity set for both measures under consideration. The first claim easily follows since
\[
\mathbb P^{(m)}_{\Omega}[F^\varepsilon]\ =\ \lim_{\delta\to 0}\mathbb P^{(m)}_{\Od}[F^{\varepsilon}]\ \le\
C\cdot \lim_{\delta\to 0}\mathbb P_{\Od}[F^{\varepsilon}]\ =\ C\cdot \mathbb P_{\Omega}[F^\varepsilon]
\]
for such approximations of~$A$, here and below we write~$\Omega$ instead of~$(\Omega;a,b)$ and~$\Od$ instead of~$(\Od;a^\delta,b^\delta)$ for shortness. Therefore, $\mathbb P^{(m)}_{\Omega}[A]\le C\cdot \mathbb P_{\Omega}[A]$ for each Borel set~$A$. To prove~(ii), note that
\[
\mathbb E_{\Omega}[\,\log D^{(m)}_{\Omega}\,]\ = \inf_{A_k - \text{disjoint}\,:\,\mathbb P_{\Omega}(\cup_{k=1}^n A_k)=1} \,\biggl\{\,\sum_{k=1}^n \mathbb P_{\Omega}[A_k]\log\frac{\mathbb P^{(m)}_{\Omega}[A_k]}{\mathbb P_{\Omega}[A_k]}\,\biggr\}
\]
and approximate each~$A_k$ by~$F_k^\varepsilon$ as explained above. Provided that~$\varepsilon>0$ is small enough (depending on the choice of~$F_k$), the sets~$F_k^\varepsilon$ are still disjoint and hence
\[
\mathbb E_{\Od}[\,\log D^{(m)}_{\Od}\,]\ \le\ \sum_{k=1}^n \mathbb P_{\Od}[F_k^\varepsilon]\log\frac{\mathbb P^{(m)}_{\Od}[F_k^\varepsilon]}{\mathbb P_{\Od}[F_k^\varepsilon]}\ + (1-\mathbb P_{\Od}\bigl[\cup_{k=1}^n F_k^\varepsilon\bigr])\cdot \log C.
\]
The proof is completed by applying the uniform estimate~$\mathbb E_{\Od}[\,\log D^{(m)}_{\Od}\,]\ge-\log C$ provided  by Corollary~\ref{cor:Dm}(ii), passing to the limit~$\delta\to 0$, and then passing to the limit in the choice of approximations~$F_k^\varepsilon$ of a given disjoint collection~$A_k$.
\end{proof}

We now discuss how the law of the driving term~$\xi_t=\sqrt{2}B_t$ of SLE(2) changes when the measure~$\mathbb P_{(\Omega;a,b)}$ is replaced by~$\mathbb P^{(m)}_{(\Omega;a,b)}$. Let
\[
D^{(m)}_t\ :=\ \mathbb{E}[\,{D}^{(m)}_{(\Omega;a,b)}\,|\,\mathcal{F}_t\,],
\]
where~$\mathcal{F}_t$ denotes the (completed) canonical filtration of the Brownian motion~$B_t$. Since~${D}^{(m)}_{(\Omega;a,b)}>0$ almost surely, $D^{(m)}_t$ is a continuous martingale taking (strictly) positive values (e.g., see~\cite[p.~107]{legall-book}). Therefore (see~\cite[{ Proposition~5.8}]{legall-book}), there exists a unique continuous local martingale $L^{(m)}_t$ such that
\begin{equation}
\label{x:Ldef}
D^{(m)}_t\ =\ \exp\bigl(L^{(m)}_t\!-\! \tfrac{1}{2}\langle L^{(m)},L^{(m)} \rangle_t\bigr)
\end{equation}
and the Girsanov theorem (see~\cite[Theorem~5.8]{legall-book}) implies that 
\begin{equation}
\label{eq:Xi=}
 \xi_t=\sqrt{2}\cdot (B_t+\langle B,L^{(m)}\rangle_t)\quad \mathrm{under}\quad \mathbb{P}^{(m)}_{(\Omega;a,b)}\,.
\end{equation}

Let~$\tau_n\to\infty$ be stopping times that localize~$L^{(m)}_t$. Jensen's inequality (which can be applied due to Lemma~\ref{lem:density}(i)) and Lemma~\ref{lem:density}(ii) imply that
\begin{equation}
\label{eq:LL<infty}
\mathbb E[\tfrac{1}{2}\langle L^{(m)},L^{(m)}\rangle_\infty]\ =\lim_{\tau_n\to\infty}\mathbb E[-\log D^{(m)}_{\tau_n}]\ \le\ \mathbb E [-\log D^{(m)}_{(\Omega;a,b)}]\ \le\ c_0m^2R^2.
\end{equation}
In particular,~$\langle L^{(m)},L^{(m)}\rangle_\infty<+\infty$ a.\,s. In fact, \emph{a posteriori} one can deduce from Theorem~\ref{theorem} that \mbox{$\langle L^{(m)},L^{(m)}\rangle_\infty\le \mathrm{const}(m,R)<+\infty$} a.\,s. (see~Remark~\ref{rem:int-lambda2}). Note however that we need some \emph{a priori} information on~$L^{(m)}$ to prove this theorem.

\begin{remark} \label{rem:Nm-not-mart}
By definition, the process~$(D^{(m)}_t)^{-1}$ is a local martingale under~$\mathbb P^{(m)}_{(\Omega;a,b)}$. Assume that, for an adapted process~$\lambda_t$, one has
\begin{equation}
\label{x:lambda}
\mathrm{d}(D^{(m)}_t)^{-1}\ =\ -\sqrt{2}\lambda_t\cdot (D^{(m)}_t)^{-1}\cdot \mathrm{d}B_t \quad \mathrm{under}\quad \mathbb{P}^{(m)}_{(\Omega;a,b)}\,.
\end{equation}
Due to~\eqref{x:Ldef}, this implies that the martingale part of the process~$L_t$ (which is a semi-martingale under~$\mathbb P^{(m)}_{(\Omega;a,b)}$) is~$\sqrt{2}\lambda_t\mathrm{d}B_t$ and hence
\[
\mathrm{d}\xi_t\ =\ \sqrt{2}\mathrm{d}B_t+2\lambda_t\mathrm{d}t \quad \mathrm{under}\quad \mathbb{P}^{(m)}_{(\Omega;a,b)}.
\]
Therefore, in order to find the law of~$\xi_t$ it is enough to identify~$\lambda_t$ in~\eqref{x:lambda}. It is worth noting that in the \emph{massive} setup
\[
(D^{(m)}_t)^{-1}\ \neq\ \lim_{\delta\to 0} \bigl(\,\Z_{\Od_t}(a^\delta_t,b^\delta)/\Zm_{\Od_t}(a^\delta_t,b^\delta)\,\bigr)\ =:\ N^{(m)}_t,
\]
a standard identity, e.g., in the \emph{multiple} SLE context. The reason is that the total mass of massive RW loops attached to the tip~$a_t^\delta$ is strictly smaller than the mass of the critical ones. Because of that, the process~$N^{(m)}_t$ actually has a negative drift (which can be computed explicitly, see~\eqref{eq:dN=}) and one cannot easily deduce Theorem~\ref{theorem} relying only upon the analysis of this process; cf. Remark~\ref{rem:Nm-vs-Nb}.
\end{remark}

\section{Convergence of martingale observables}\label{sect:convergence}

\setcounter{equation}{0}

\subsection{Convergence of discrete harmonic functions} In this section we recall two useful results from~\cite{chelkak-smirnov-11}: convergence of the discrete Green functions $\Z_{\Od}(u^\delta,v^\delta)$ and of the discrete Poisson kernels~$\Z_{\Od}(a^\delta,u^\delta)/Z_{\Od}(a^\delta,v^\delta)$ as~$\contOd\to\Omega$, where~$u,v$ are inner points and~$a$ is a boundary point (more accurately, a prime end) of~$\Omega$. Recall that we denote by~$\contOd$ the polygonal representation of a discrete domain~$\Omega^\delta$. 
\begin{definition}\label{def:r-inside}
Let $\Omega\subset\mathbb C$ be a simply connected bounded domain and~$r>0$. We say that points $u,v \in \Omega$ are \emph{jointly $r$-inside~$\Omega$} if 
they can be connected by a path~$L_{uv}\subset\Omega$ such that $\dist(L_{uv},\partial\Omega) > r$. In other words, $u$ and $v$ belong to the same connected component of the $r$-interior of $\Omega$.
\end{definition}

{In what follows, we assume that all domains under considerations are \emph{uniformly} bounded. This assumption is mostly technical; in particular, it slightly simplify the discussion of subsequential limits of $\widehat\Omega{}^\delta$ in the Carath\'eodory topology, which is useful when speaking about uniform (with respect to $\widehat\Omega{}^\delta$) estimates; cf.~\cite{chelkak-smirnov-11}.}

\begin{proposition}\label{prop:G-close} Let~$0<r<R$ be fixed. There exists a function~$\varepsilon(\delta)=\varepsilon(\delta,r,R)$, defined for small enough~$\delta\le\delta_0(r,R)$, such that~$\varepsilon(\delta)\to 0$ as~$\delta\to 0$ and that the following is fulfilled for all simply connected discrete domains~$\contOd\subset B(0,R)$ and all pairs of points~$u^\delta,v^\delta$ lying jointly r-inside~$\contOd$ and such that~$|u^\delta-v^\delta|\ge r$:
\begin{equation}
\label{eq:G-close}
|\Z_{\Od}(u^\delta,v^\delta)-G_{\widehat{\Omega}^\delta}(u^\delta,v^\delta)|\ \le\ \varepsilon(\delta).
\end{equation}
\end{proposition}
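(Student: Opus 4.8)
\textbf{Proof proposal for Proposition~\ref{prop:G-close}.}

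The plan is to establish \eqref{eq:G-close} by relating both $\Z_{\Od}(u^\delta,v^\delta)$ and $G_{\contOd}(u^\delta,v^\delta)$ to the (continuous) Green function $G_\Omega$ and passing through a compactness argument. First I would recall the standard normalization: $\Z_{\Od}(u^\delta,v^\delta)$ is, up to the lattice-dependent factor, the discrete Green function of the simple random walk on $\contOd$, so that $-\tfrac14\Delta^\delta\Z_{\Od}(u^\delta,\cdot)=\delta^{-2}\mathbf 1_{\{\cdot=u^\delta\}}$ with zero boundary values on $\partial\Od$, and it converges to $G_{\contOd}(u^\delta,\cdot)$ away from the pole with the appropriate additive logarithmic renormalization absorbed into the lattice Green function asymptotics $\mathfrak g^\delta(w)=\tfrac{1}{2\pi}\log|w|+O(\delta)$; cf.~\cite{chelkak-smirnov-11}. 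The key analytic input is the uniform convergence of discrete harmonic functions to their continuous counterparts on the $r$-interior of a domain, together with uniform estimates (Harnack, boundary regularity via the discrete Beurling estimate) that make the error controllable solely in terms of $\delta, r, R$.

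The main steps I would carry out are: (1) write $\Z_{\Od}(u^\delta,v^\delta)=G_{\contOd}^{\mathrm{disc}}(u^\delta,v^\delta)$ as the discrete harmonic (in $v^\delta$) function with a logarithmic singularity at $u^\delta$, and split it as $\mathfrak g^\delta(v^\delta-u^\delta)-H^\delta(v^\delta)$ where $H^\delta$ is discrete harmonic in $\contOd$ with boundary values $\mathfrak g^\delta(\cdot-u^\delta)$ on $\partial\Od$; do the parallel decomposition $G_{\contOd}(u^\delta,v^\delta)=\tfrac{1}{2\pi}\log|v^\delta-u^\delta|-H(v^\delta)$ with $H$ continuous harmonic; (2) since $|u^\delta-v^\delta|\ge r$ and both points are jointly $r$-inside, the singular terms agree up to $O(\delta)$ by the lattice Green function asymptotics, so it remains to bound $|H^\delta(v^\delta)-H(v^\delta)|$; (3) use the convergence theorem for discrete harmonic functions with the \emph{same} (here logarithmic) boundary data — $H^\delta$ and $H$ are harmonic extensions of boundary data that are uniformly close, and $v^\delta$ lies $r$-inside — to conclude $|H^\delta(v^\delta)-H(v^\delta)|\le\varepsilon(\delta,r,R)$; (4) finally, argue that $\varepsilon$ can be chosen uniformly over all $\contOd\subset B(0,R)$ by a compactness/contradiction argument: if not, there is a sequence $\contOd$ violating the bound, which one can pass to a Carath\'eodory-convergent subsequence (the uniform boundedness assumption is used here), and then the convergence of discrete harmonic measures and Green's functions along this subsequence yields a contradiction.

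The step I expect to be the main obstacle is (4), the passage to a \emph{uniform} error $\varepsilon(\delta,r,R)$ independent of the domain, since the boundary of $\contOd$ is completely unconstrained and the harmonic extension $H^\delta$ of the (singular near $u^\delta$ but here smooth, since $u^\delta$ is $r$-inside) boundary data can behave wildly near $\partial\Od$. The resolution is that we only evaluate at $v^\delta$ which is $r$-inside, so the interior Harnack principle plus the discrete Beurling estimate give equicontinuity and uniform boundedness of the family $\{H^\delta\}$ on the $r$-interior; together with precompactness of $\{\contOd\subset B(0,R)\}$ in the Carath\'eodory topology (and the corresponding convergence of harmonic measures $\omega_{\contOd}(u^\delta,\cdot)$ tested against the bounded continuous function $\mathfrak g(\cdot-u)$, which is continuous since the pole stays $r$-inside), this rules out any domain-dependent blow-up of the error. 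All of this machinery is exactly what is packaged in~\cite{chelkak-smirnov-11}, so in the write-up I would simply cite the relevant statement there and indicate how the hypotheses (joint $r$-insideness, $|u^\delta-v^\delta|\ge r$, $\contOd\subset B(0,R)$) match.
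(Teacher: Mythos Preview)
Your proposal is correct and takes essentially the same approach as the paper: the paper's proof simply cites \cite[Corollary~3.11]{chelkak-smirnov-11} together with the standard asymptotics of the discrete full-plane Green function, and your steps (1)--(4) are precisely an unpacking of what that citation contains (decomposition into full-plane kernel plus harmonic correction, convergence of the latter, and the compactness argument for uniformity in the domain). Since you already plan to cite \cite{chelkak-smirnov-11} in the write-up, the final product will match the paper's proof almost verbatim.
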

\begin{proof} This follows from (a more general in several aspects) uniform convergence result provided by~\cite[Corollary~3.11]{chelkak-smirnov-11} 
and the convergence of the discrete \emph{full-plane} Green function on the rescaled grid $\delta\mathbb{Z}$ to~$-\frac{1}{2\pi}\log|u^\delta-v^\delta|$ (up to a constant) for~$r\le |u^\delta-v^\delta|\le 2R$ and~$\delta\to 0$, the latter being a standard fact of the discrete potential theory on the square grid.
\end{proof}

\begin{corollary} \label{cor:G-conv} Let~$\Omega\subset B(0,R)$ be a simply connected planar domain and $u,v\in\Omega$ be two distinct points of~$\Omega$. Assume that discrete domains~$\contOd\subset B(0,R)$ approximate $\Omega$ (in the Cara\-th\'eodory topology with respect to~$u$ or~$v$) as~$\delta\to 0$. Then,
\begin{equation}
\label{eq:G-conv}
\Z_{\Od}(u^\delta,v^\delta)\ \to\ G_\Omega(u,v)\quad \text{as}\ \ \delta\to 0.
\end{equation}
Moreover, for each~$r>0$ this convergence is uniform provided that~$u$ and~$v$ are jointly~$r$-inside~$\Omega$ and~$|u-v|\ge r$.
\end{corollary}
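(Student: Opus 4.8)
The plan is to deduce Corollary~\ref{cor:G-conv} from Proposition~\ref{prop:G-close} by combining the uniform closeness of the discrete Green function $\Z_{\Od}(u^\delta,v^\delta)$ to the continuous Green function $G_{\contOd}(u^\delta,v^\delta)$ of the polygonal domain with the Carath\'eodory convergence $\contOd\to\Omega$, which forces $G_{\contOd}\to G_\Omega$. First I would fix the two distinct points $u,v\in\Omega$ and choose $r>0$ small enough that $u,v$ are jointly $r$-inside $\Omega$ and $|u-v|\ge r$; such an $r$ exists since $\Omega$ is open and connected. By the Carath\'eodory convergence \eqref{x:Cara} (equivalently, the definition via inner/boundary point approximation), for all sufficiently small $\delta$ the points $u^\delta,v^\delta$ (taken to converge to $u,v$) are jointly $r/2$-inside $\contOd$ and satisfy $|u^\delta-v^\delta|\ge r/2$, and of course $\contOd\subset B(0,R)$. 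Hence Proposition~\ref{prop:G-close} applies (with $r/2$ in place of $r$) and gives
\[
|\Z_{\Od}(u^\delta,v^\delta)-G_{\contOd}(u^\delta,v^\delta)|\ \le\ \varepsilon(\delta,\tfrac r2,R)\ \to\ 0.
\]

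It then remains to show $G_{\contOd}(u^\delta,v^\delta)\to G_\Omega(u,v)$. This is a purely continuous statement about stability of the Green function under Carath\'eodory convergence: writing $G_D(u,v)=-\frac1{2\pi}\log|u-v|+\mathrm{harm}_D(u,v)$, where the harmonic correction is the solution of a Dirichlet problem, one uses that $\phi_{\contOd}\to\phi_\Omega$ and $\phi_{\contOd}^{-1}\to\phi_\Omega^{-1}$ locally uniformly (by conformal invariance of the Green function, $G_D(u,v)=-\frac1{2\pi}\log\big|\frac{\phi_D(u)-\phi_D(v)}{\phi_D(u)-\overline{\phi_D(v)}}\big|$ in the simply connected case, so the convergence is immediate once $\phi_{\contOd}(u^\delta)\to\phi_\Omega(u)$ and $\phi_{\contOd}(v^\delta)\to\phi_\Omega(v)$, which follows from \eqref{x:Cara} together with $u^\delta\to u$, $v^\delta\to v$). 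Combining the two displays via the triangle inequality yields \eqref{eq:G-conv}.

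For the uniformity claim, I would argue by contradiction using compactness. Suppose \eqref{eq:G-conv} fails uniformly over the family of $(\contOd,u^\delta,v^\delta)$ with $u,v$ jointly $r$-inside $\Omega$ and $|u-v|\ge r$: then there is a sequence along which $|\Z_{\Od}(u^\delta,v^\delta)-G_\Omega(u,v)|\ge\eta>0$ while (passing to a subsequence, using uniform boundedness $\contOd\subset B(0,R)$ and precompactness in the Carath\'eodory topology with respect to a fixed inner point, plus $u^\delta\to u$, $v^\delta\to v$ after a further subsequence) $\contOd$ converges in Carath\'eodory sense to some $\Omega$ containing $u,v$ jointly $r$-inside. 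Proposition~\ref{prop:G-close} kills the discrete-to-continuous gap uniformly (the function $\varepsilon(\delta,r,R)$ depends only on $r,R$), and the continuous stability argument above kills $|G_{\contOd}(u^\delta,v^\delta)-G_\Omega(u,v)|$, contradicting $\eta>0$.

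The main obstacle I anticipate is not any single estimate but the bookkeeping needed to pass from ``$u,v$ jointly $r$-inside $\Omega$'' to ``$u^\delta,v^\delta$ jointly $r'$-inside $\contOd$ for some fixed $r'>0$'' uniformly along an arbitrary Carath\'eodory-convergent sequence, i.e.\ checking that the $r$-interior of $\Omega$ is captured by the $r'$-interiors of the $\contOd$ for small $\delta$; this is where one genuinely uses that $\contOd\to\Omega$ in the Carath\'eodory (not merely Hausdorff) sense, so that a path $L_{uv}$ with $\dist(L_{uv},\partial\Omega)>r$ lies inside $\contOd$ with a comparable distance to $\partial\contOd$ once $\delta$ is small. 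Everything else is a routine combination of Proposition~\ref{prop:G-close} with the conformal-invariance formula \eqref{eq:PsimG} for $G_\Omega$ and the locally uniform convergence of the uniformizing maps.
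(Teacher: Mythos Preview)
Your proof is correct and follows essentially the same route as the paper: apply Proposition~\ref{prop:G-close} to control $|\Z_{\Od}(u^\delta,v^\delta)-G_{\contOd}(u^\delta,v^\delta)|$, then use conformal invariance of the continuous Green function together with the Carath\'eodory convergence $\contOd\to\Omega$ to get $G_{\contOd}(u^\delta,v^\delta)\to G_\Omega(u,v)$. The paper's proof is more terse but the ingredients and the key step (transferring ``jointly $r$-inside $\Omega$'' to ``jointly $r'$-inside $\contOd$'' via Carath\'eodory convergence of a fixed path $L_{uv}$) are exactly the ones you identify.
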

\begin{proof} Let~$L_{uv}\subset\Omega$ be a path connecting~$u$ and~$v$ inside~$\Omega$ and~$r:=\frac{1}{2}\dist(L_{uv},\partial\Omega)$. It follows from the Carath\'eodory convergence of~$\contOd$ to~$\Omega$ that~$u^\delta$ and~$v^\delta$ are jointly \mbox{$r$-inside} of~$\Omega^\delta$ provided that~$\delta$ is small enough. Since (the continuous) Green function is conformally invariant,~$G_{\contOd}(u^\delta,v^\delta)\to G_\Omega(u,v)$ as~$\delta\to 0$ uniformly for such~$u$ and~$v$ and thus the claim trivially follows from~\eqref{eq:G-close}.
\end{proof}
\begin{remark} In Section~\ref{sect:mGreen} we prove an analogue of~\eqref{eq:G-conv} in the massive setup along the lines of~\cite{chelkak-smirnov-11} though do not discuss an analogue of~\eqref{eq:G-close}. Note that in~\cite{chelkak-smirnov-11} the uniform estimate~\eqref{eq:G-close} is actually \emph{deduced from~\eqref{eq:G-conv}} by compactness arguments; cf. the proofs of Proposition~\ref{prop:P-close} and Corollary~\ref{cor:P-conv} discussed below.
\end{remark}

\begin{proposition}\label{prop:P-close}
Let~$0<r<R$ be fixed. There exists a function~$\varepsilon(\delta)=\varepsilon(\delta,r,R)$, defined for small enough~$\delta\le\delta_0(r,R)$, such that~$\varepsilon(\delta)\to 0$ as~$\delta\to 0$ and that the following is fulfilled for all simply connected discrete domains~$\contOd\subset B(0,R)$, all boundary points~$a^\delta$, and all inner points~$u^\delta,v^\delta\in\Omega^\delta$ lying jointly r-inside~$\contOd$:
\begin{equation}
\label{eq:P-close}
\biggl|\,\frac{\Z_{\Od}(a^\delta,u^\delta)}{\Z_{\Od}(a^\delta,v^\delta)} \,-\, \frac{P_{\widehat{\Omega}^\delta}(a^\delta,u^\delta)}{P_{\widehat{\Omega}^\delta}(a^\delta,v^\delta)}\,\biggr|\ \le\ \varepsilon(\delta),
\end{equation}
where~$P_{\contOd}(a^\delta,\cdot)$ denotes the Poisson kernel in the polygonal representation~$\contOd$ with mass at the point~$a^\delta\in\partial\contOd$, note that its normalization is irrelevant for~\eqref{eq:P-close}.
\end{proposition}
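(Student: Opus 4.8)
The plan is to deduce Proposition~\ref{prop:P-close} from the pointwise convergence statement (an analogue of Corollary~\ref{cor:G-conv} for Poisson kernels, which itself follows along the lines of~\cite{chelkak-smirnov-11}) by a compactness argument, exactly in the spirit indicated in the Remark above: the uniform estimate is extracted from the pointwise convergence by contradiction. First I would fix $0<r<R$ and suppose~\eqref{eq:P-close} fails; then there exist $\varepsilon_0>0$, a sequence $\delta_k\to 0$, discrete domains $\widehat\Omega^{\delta_k}\subset B(0,R)$, boundary points $a^{\delta_k}$, and pairs $u^{\delta_k},v^{\delta_k}$ jointly $r$-inside $\widehat\Omega^{\delta_k}$, violating the bound by at least $\varepsilon_0$. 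The key point is that the set of all such configurations is precompact: by a diagonal/compactness argument one may pass to a subsequence along which $\widehat\Omega^{\delta_k}$ converges in the Carath\'eodory topology (with respect to the $r$-interior, which is non-empty and stays at definite distance from the boundary) to some simply connected $\Omega\subset B(0,R)$, the points $u^{\delta_k}\to u$, $v^{\delta_k}\to v$ converge to points jointly $r$-inside $\Omega$, and the boundary prime ends $a^{\delta_k}$ converge to a prime end $a$ of $\Omega$ (after possibly passing to a further subsequence, using that $\partial B(0,R)$ is compact and the Carath\'eodory kernel theory controls the boundary correspondence).

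Next I would invoke the pointwise convergence result, which gives $\Z_{\widehat\Omega^{\delta_k}}(a^{\delta_k},u^{\delta_k})/\Z_{\widehat\Omega^{\delta_k}}(a^{\delta_k},v^{\delta_k})\to P_\Omega(a,u)/P_\Omega(a,v)$ and, separately (since the continuous Poisson kernel is conformally natural and $\widehat\Omega^{\delta_k}\to\Omega$ in Carath\'eodory), $P_{\widehat\Omega^{\delta_k}}(a^{\delta_k},u^{\delta_k})/P_{\widehat\Omega^{\delta_k}}(a^{\delta_k},v^{\delta_k})\to P_\Omega(a,u)/P_\Omega(a,v)$ as well. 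Hence the difference on the left-hand side of~\eqref{eq:P-close} tends to $0$ along the subsequence, contradicting that it was bounded below by $\varepsilon_0$. This proves the existence of the desired $\varepsilon(\delta,r,R)\to 0$.

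Two technical points need care. First, one must make sure the limiting domain $\Omega$ and its prime end $a$ are good enough that the pointwise convergence theorem applies and that $P_\Omega(a,\cdot)$ is well-defined (finite, positive) at the interior points $u,v$; here the hypothesis that $u,v$ are jointly $r$-inside with $r$ fixed is exactly what prevents degeneration, since it forces $u,v$ to stay in a compact subset of $\Omega$ away from $\partial\Omega$, where the Poisson kernel is comparable to a fixed constant. Second, one should handle the boundary-point convergence carefully: $a^{\delta_k}$ might a priori run off to a ``bad'' part of the boundary, but since only the ratio $P(a^{\delta_k},u^{\delta_k})/P(a^{\delta_k},v^{\delta_k})$ enters, and this ratio is a ratio of two positive discrete harmonic functions of $a^{\delta_k}$ satisfying a Harnack-type control on the $r$-interior, one can extract a convergent subsequence of these ratios even without pinning down $a$ precisely, then identify the limit via the pointwise theorem applied with $a$ replaced by any interior point close to it (cf.\ Corollary~\ref{cor:near-bdry=1}, which lets one replace the prime end by a nearby inner point). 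I expect the main obstacle to be precisely this last step — ensuring that the compactness extraction is compatible with the behaviour of the Poisson kernel at the (possibly degenerate, possibly irregular) boundary prime end $a$, i.e.\ that no mass escapes along the boundary — and the cleanest route is to reduce everything to ratios of discrete harmonic functions on the $r$-interior and apply the Harnack principle there, quoting the pointwise convergence of Poisson kernels from~\cite{chelkak-smirnov-11} as a black box.
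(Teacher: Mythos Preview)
Your proposal is correct and takes essentially the same approach as the paper: argue by contradiction, extract a Carath\'eodory-convergent subsequence $(\contOd;a^\delta,u^\delta,v^\delta)\to(\Omega;a,u,v)$ by compactness, and reduce to the pointwise convergence $\Z_{\Od}(a^\delta,u^\delta)/\Z_{\Od}(a^\delta,v^\delta)\to P_\Omega(a,u)/P_\Omega(a,v)$ together with the conformal invariance of the continuous ratio. The only substantive difference is that the paper does \emph{not} quote the pointwise convergence as a black box but sketches it in full, the key ingredient being the uniform upper bound $\max_{u^\delta\in\Omega^\delta_{3d}}\Z_{\Od}(a^\delta,u^\delta)/\Z_{\Od}(a^\delta,v^\delta)\le C(3d;\Omega,a)$ (see~\eqref{eq:upper-bound-O3r}) followed by a weak-Beurling estimate near~$\partial\Omega$; this is done because the original proof of~\cite[Theorem~3.13]{chelkak-smirnov-11} had a gap (cf.\ the acknowledgements), so treating it as a black box is slightly delicate. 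Your suggestion to handle the prime-end issue via Corollary~\ref{cor:near-bdry=1} is not how the paper proceeds and would be logically out of order here.
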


\begin{proof} This result is provided (again, in a stronger form) by~\cite[Theorem~3.13]{chelkak-smirnov-11}. For completeness of the exposition we sketch the key ingredients of this proof, which goes by contradiction. If the uniform estimate~\eqref{eq:P-close} was wrong, it would fail (for a fixed~$\varepsilon_0>0$) along a sequence of configurations~$(\Omega^\delta;a^\delta,u^\delta,v^\delta)$ with~$\delta\to0$. As the set of all simply connected domains~$\Lambda$ satisfying~$B(u,r)\subset \Lambda\subset B(0,R)$ is \emph{compact} in the Carath\'eodory topology, we could pass to a subsequence and assume that~$(\contOd;a^\delta,u^\delta,v^\delta)\to (\Omega;a,u,v)$ as~$\delta\to 0$ {in the Carath\'eodory sense}, with~$u$ and~$v$ being jointly~$r$-inside~$\Omega$. The {ratio of} Poisson kernels~$P_\Lambda(a,u)/P_\Lambda(a,v)$ is conformally invariant and so is stable under this convergence. Thus, it is enough to prove that
\begin{equation}
\label{x:P-conv}
\frac{\Z_{\Od}(a^\delta,u^\delta)}{\Z_{\Od}(a^\delta,v^\delta)}\ \to\ \frac{P_\Omega(a,u)}{P_\Omega(a,v)}\quad\text{as}\ \ (\contOd;a^\delta,u^\delta,v^\delta)\ \mathop{\longrightarrow}\limits^{\mathrm{Cara}}\ (\Omega;a,u,v)
\end{equation}
in order to obtain a contradiction, where~$u,v\in\Omega$ and~$a$ is a \emph{prime end} of~$\Omega$.

\smallskip

Let~$d>0$ be small enough and let a point~$a_d$ be chosen so that the circle $\partial B(a_d,\frac{1}{2}d)$ separates the prime end~$a$ from $u$ and~$v$ in~$\Omega$. Since~$(\Omega^\delta;a^\delta)$ converges to~$(\Omega;a)$, the circle~$\partial B(a_d,d)$ then separates~$a^\delta$ from $u^\delta$ and~$v^\delta$ in~$\contOd$, for all sufficiently small~$\delta$. Let~$L^\delta_d\subset \partial B(a_d,d)$ denote the arc separating~$u^\delta$ and~$v^\delta$ from~$a^\delta$ and all the other arcs forming the set~$\partial B(a_d,d)\cap\widehat{\Omega}^\delta$, in other words this is the first arc of~$\partial B(a_d,d)\cap\widehat{\Omega}^\delta$ to cross for a path running from, say,~$u^\delta$ to~$a^\delta$; see~\cite[Fig.~4]{chelkak-smirnov-11}.

\smallskip

Denote by~$\Omega^\delta_{3d}$ the connected component of~$\Omega^\delta\smallsetminus B(a_d,3d)$ that contains~$v^\delta$. The key argument of the proof is the following uniform (for small enough~$\delta$) estimate:
\begin{equation}
\label{eq:upper-bound-O3r}
\max_{u^\delta\in\Omega^\delta_{3d}}\frac{\Z_{\Od}(a^\delta,u^\delta)}{\Z_{\Od}(a^\delta,v^\delta)}\ \le\ C(3d;\Omega,a).
\end{equation}
We refer the reader to~\cite[pp. 26--27]{chelkak-smirnov-11} for the proof of this statement which is based on the fact that the discrete harmonic measure~$\omega^\delta(v^\delta;K^\delta_{3d};\Omega^\delta_d)$ of each path~$K_{3d}^\delta$ started in~$\Omega^\delta_{3d}$ and running to~$L^\delta_d$ is uniformly bounded from below due to~\cite[Theorem~3.12]{chelkak-smirnov-11} and~\cite[Lemma~3.14]{chelkak-smirnov-11}; note that~$u^\delta$ is \emph{not} assumed to be located in the~$r$-interior of~$\Omega^\delta$ in~\eqref{eq:upper-bound-O3r}.

\smallskip

The proof can be now completed in a standard way. The (uniform in~$\delta$) weak-Beurling estimate (see Lemma~\ref{lem:weak Beurling}) allows one to improve the uniform bound~\eqref{eq:upper-bound-O3r} near the boundary of~$\Omega^\delta$:
\[
\frac{\Z_{\Od}(a^\delta,u^\delta)}{\Z_{\Od}(a^\delta,v^\delta)}\ \le\ \const\cdot (\dist(u^\delta,\partial\Omega^\delta)/d)^\beta\cdot C(3d;\Omega,a)\quad \text{for}\ \ u^\delta\in\Od_{4d}.
\]
Since uniformly bounded discrete harmonic functions are also equicontinuous (cf. Lemma~\ref{lem:regularity}), one can pass to a subsequence once again to get the (uniform on compact subsets) convergence
\[
\frac{\Z_{\Od}(a^\delta,u^\delta)}{\Z_{\Od}(a^\delta,v^\delta)}\ \to\ \ h(u),\quad u\in\bigcup\nolimits_{d>0}\Omega_{4d}=\Omega.
\]
Each subsequential limit~$h$ is a positive harmonic function in~$\Omega$ normalized so that~$h(v)=1$ and satisfies, for each~$d>0$, the same estimate
\[
h(u)\ \le\ \const\cdot (\dist(u,\partial\Omega)/d)^\beta\cdot C(3d;\Omega,a)\quad \text{for}\ \ u\in\Omega_{4d}.
\]
Thus,~$h$ has Dirichlet boundary conditions, except at the prime end~$a$. These properties characterize the Poisson kernel~$h(u)=P_\Omega(a,u)/P_\Omega(a,v)$ uniquely.
\end{proof}

\begin{corollary} \label{cor:P-conv}
Let~$\Omega\subset B(0,R)$ be a simply connected planar domain,~$a\in\partial\Omega$ be its prime end, and $u,v\in\Omega$ be two, not necessarily distinct, inner points. Assume that discrete domains~$\contOd\subset B(0,R)$ with marked boundary points~$a^\delta\in\partial\Omega^\delta$ approximate $(\Omega;a)$ in the Cara\-th\'eodory topology with respect to~$u$ or~$v$. Then,
\begin{equation}
\label{eq:P-conv}
\frac{\Z_{\Od}(a^\delta,u^\delta)}{\Z_{\Od}(a^\delta,v^\delta)}\ \to\ \frac{P_\Omega(a,u)}{P_\Omega(a,v)}\quad \text{as}\ \ \delta\to 0.
\end{equation}
Moreover, for each~$r>0$ this convergence is uniform if~$u,v$ are jointly~$r$-inside~$\Omega$.
\end{corollary}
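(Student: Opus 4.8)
The pointwise assertion~\eqref{eq:P-conv} is essentially the convergence~\eqref{x:P-conv} that was already established in the course of proving Proposition~\ref{prop:P-close}; the only genuinely new content of the corollary is therefore the \emph{uniformity} over all configurations in which $u$ and $v$ are jointly $r$-inside~$\Omega$. The plan for this is to combine the uniform estimate~\eqref{eq:P-close} with a uniform version of the convergence $P_{\contOd}(a^\delta,u^\delta)/P_{\contOd}(a^\delta,v^\delta)\to P_\Omega(a,u)/P_\Omega(a,v)$ of the continuous Poisson kernel ratios, so that the two pieces can simply be added up.

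To set up the application of Proposition~\ref{prop:P-close}, I would first dispose of the trivial case $u=v$ and then, given a path $L_{uv}\subset\Omega$ realizing the fact that $u,v$ are jointly $r$-inside~$\Omega$, use the Carath\'eodory convergence of $\contOd$ to $\Omega$ (together with the boundary approximation) to conclude that a compact neighborhood of $L_{uv}$ is contained in $\contOd$ for all small $\delta$; hence $u^\delta,v^\delta$ are jointly $\tfrac12 r$-inside~$\contOd$, and~\eqref{eq:P-close}, applied with $\tfrac12 r$ in place of $r$, gives that the difference between $\Z_{\Od}(a^\delta,u^\delta)/\Z_{\Od}(a^\delta,v^\delta)$ and $P_{\contOd}(a^\delta,u^\delta)/P_{\contOd}(a^\delta,v^\delta)$ tends to $0$. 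It then remains to control the continuous ratio. Here I would invoke, exactly as in the proof of Proposition~\ref{prop:P-close}, the conformal invariance of $P_\Lambda(a,u)/P_\Lambda(a,v)$, compute it in~$\mathbb H$ after a uniformization sending the prime end $a$ to~$0$ (the normalization being irrelevant), and then use~\eqref{x:Cara} to pass to the limit: the maps $\phi_{\contOd}$ normalized by $\phi_{\contOd}(a^\delta)=0$ converge to $\phi_\Omega$ locally uniformly near $u,v$ together with their derivatives by Cauchy's integral formula, while $\phi_\Omega(u),\phi_\Omega(v)$ stay bounded away from $0$, from $\infty$, and from each other as long as $u,v$ are jointly $r$-inside~$\Omega$.

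Both of these ingredients are pointwise in nature, so the \emph{uniform} versions are then obtained, as in the proof of Proposition~\ref{prop:P-close}, by a contradiction argument based on the precompactness of the family of quadruples $(\Lambda;a,u,v)$ with $\Lambda\subset B(0,R)$ and $u,v$ jointly $r$-inside~$\Lambda$ in the Carath\'eodory topology. I do not expect a genuine obstacle: Proposition~\ref{prop:P-close} and the conformal invariance of the Poisson kernel ratio are already available, and the only mild point of care is this bookkeeping of uniformity, which is handled by the very same compactness argument that underlies Proposition~\ref{prop:P-close} itself.
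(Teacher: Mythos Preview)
Your proposal is correct and follows essentially the same approach as the paper: pointwise convergence is~\eqref{x:P-conv}, and uniformity is obtained by combining the uniform estimate~\eqref{eq:P-close} with the uniform convergence of the continuous Poisson kernel ratios, the latter coming from conformal invariance and the Carath\'eodory convergence of~$(\contOd;a^\delta)$ to~$(\Omega;a)$. The only minor slip is the remark that $\phi_\Omega(u),\phi_\Omega(v)$ stay bounded away ``from each other'': this is neither true (the corollary explicitly allows $u=v$) nor needed, since the Poisson kernel ratio is perfectly regular on the diagonal.
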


\begin{proof} For a fixed pair $u,v$ of points of~$\Omega$, this result is given by~\eqref{x:P-conv} and is a key step of the proof of Proposition~\ref{prop:P-close}. The fact that the convergence is uniform provided that~$u$ and~$v$ are jointly $r$-inside~$\Omega$ can be, for instance, deduced from~\eqref{eq:P-close} and the conformal invariance of the Poisson kernel. Indeed, the Carath\'eodory convergence of~$(\contOd;a^\delta)$ to~$(\Omega;a)$ implies that ${P_{\contOd}(a,u)}/{P_{\contOd}(a,v)} \to {P_\Omega(a,u)}/{P_\Omega(a,v)}$ as~$\delta\to 0$, uniformly for such~$u$ and~$v$.
\end{proof}

\subsection{Boundary behavior of discrete harmonic functions} \label{sect:boundarybehavior} Since we work in the chordal setup, in order to prove the convergence of the martingale observables~\eqref{eq:defMm} we need convergence results for (both classical and massive) Poisson kernels normalized at the boundary. To make the exposition self-contained and accessible to readers who are not familiar with the classical potential theory in 2D, we start this section with a remark on the boundary behavior of continuous harmonic functions defined in a vicinity~$B_\Omega(b,r)\subset\Omega$ of its degenerate prime end~$b$ and satisfying the \emph{zero} Dirichlet boundary conditions on~$\partial B_\Omega(b,r)\cap \partial\Omega$.

Given two such (positive) functions~$h_1,h_2:B_\Omega(b,r)\to\mathbb R_+$, we claim that their \emph{ratio}~$h_1/h_2$ is always continuous at~$b$ and we slightly abuse the notation by writing
\begin{equation}
\label{eq:h1(b)/h2(b)def}
\frac{h_1(b)}{h_2(b)}\ :=\ \lim\nolimits_{\rho_\Omega(b,z)\to 0}\frac{h_1(z)}{h_2(z)},
\end{equation}
Indeed, let~$\phi:B_\Omega(b,r)\to\mathbb H$ be a conformal uniformization of~$B_\Omega(b,r)$ onto the upper half-plane~$\mathbb H$ such that~$\phi(b)=0$. Both functions~$h_{1,2}\circ\phi^{-1}$ are harmonic in {the upper half-plane~$\mathbb H$ and have Dirichlet boundary values near $0$. By the Schwarz reflection principle, these functions} must behave like~$c_{1,2}\Im z+O(|z|^2)$ as~$z\to 0$, which implies the existence of the limit~$c_1/c_2$ in~\eqref{eq:h1(b)/h2(b)def}. Below we prove a similar statement in discrete, \emph{uniformly over all possible shapes} of discrete domains~$\Omega^\delta$ near~$b$. To do this, we need additional notation.

Let~$\Od$ be a simply connected discrete domain, $o\in\contOd$, $b\in\partial{\Od}$, and~\mbox{$r>2\delta$} be such that~$o\not\in B_{\widehat{\Omega}^\delta}(b,r)$. {Consider a collection of arcs $\partial B(b,r)\cap \contOd$ and denote by $S_o(b,r)$ one of these arcs that separates $o$ from $b$ in $\contOd$; if there are several such arcs, then we take the closest to $b$ among them as $S_o(b,r)$. (More precisely, we require that $S_o(b,r)$ separates $b$ from all the other arcs from this sub-collection.)}
Let~$\Omega^\delta_o(b,r)$ be the connected component of~$\Omega^\delta\smallsetminus B(b,r)$ that contains the point~$o$. Further, let~$S_o^\delta(b,r^+),S_o^\delta(b,r^-)\subset\Od$ be the sets of vertices that are adjacent to the arc~$S_o(b,r)$ from outside and from inside, respectively; see Fig.~\ref{fig:Cr}.

\begin{figure}
	\includegraphics[width=0.8\textwidth]{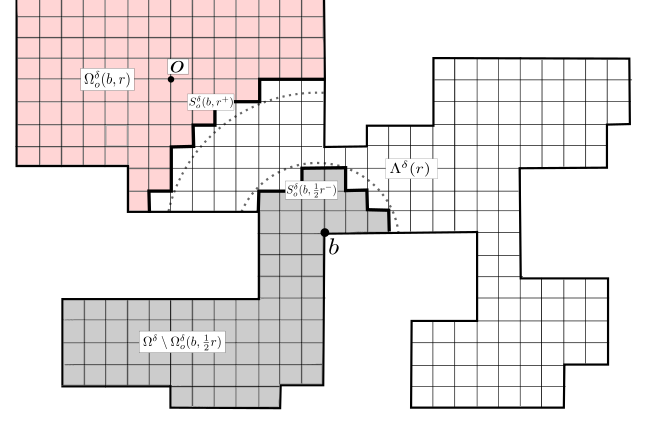}
\caption{Notation used in Lemma~\ref{lemma:-+} and Corollary~\ref{cor:near-bdry=1}.\label{fig:Cr}}
\end{figure}

\begin{lemma}\label{lemma:-+} There exists a universal constant~$k<1$ such that the following is fulfilled. In the setup described above, for each pair of positive discrete harmonic functions~$H_1,H_2:\Od\to\mathbb R_+$ satisfying the Dirichlet boundary conditions on \mbox{$\partial\Od\smallsetminus\partial \Od_o(b,r)$}, one has
\begin{align*}
\max\nolimits_{u,v\in \Od\smallsetminus \Od_o(b,\frac{1}{2}r)}\ &\biggl|\frac{H_1(u)H_2(v)-H_1(v)H_2(u)}{H_1(u)H_2(v)+H_1(v)H_2(u)}\biggr| \\ & \le\ k\cdot \max\nolimits_{x,y\in S^\delta_o(b,r^+)}\ \biggl|\frac{H_1(x)H_2(y)-H_1(y)H_2(x)}{H_1(x)H_2(y)+H_1(y)H_2(x)}\biggr|\,.
\end{align*}
\end{lemma}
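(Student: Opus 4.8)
The plan is to prove a Harnack-type contraction for the ``cross-ratio'' quantity
\[
\Phi[H_1,H_2](x,y)\ :=\ \frac{H_1(x)H_2(y)-H_1(y)H_2(x)}{H_1(x)H_2(y)+H_1(y)H_2(x)}\ =\ \frac{(H_1/H_2)(x)-(H_1/H_2)(y)}{(H_1/H_2)(x)+(H_1/H_2)(y)}
\]
as one passes from the outer circle $S^\delta_o(b,r^+)$ to the inner region $\Od\smallsetminus\Od_o(b,\tfrac12 r)$. The first observation I would record is that $\Phi$ is a monotone function of the ratio $t := (H_1/H_2)(x)/(H_1/H_2)(y)$, namely $\Phi=(t-1)/(t+1)$, so controlling $|\Phi|$ is equivalent to controlling the oscillation of the positive function $h:=H_1/H_2$ in the multiplicative sense; and $|\Phi|\le k$ at a pair $(x,y)$ exactly means $h(x)/h(y)\in[\tfrac{1-k}{1+k},\tfrac{1+k}{1-k}]$. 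Thus the statement is a uniform \emph{boundary Harnack principle}: the multiplicative oscillation of $h$ on $\Od\smallsetminus\Od_o(b,\tfrac12 r)$ is a definite fraction of its oscillation on the outer layer $S^\delta_o(b,r^+)$.

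The key structural input is that $H_1,H_2$ vanish on $\partial\Od\smallsetminus\partial\Od_o(b,r)$, so inside the ``annular'' region near $b$ the only ``source'' of mass comes through the arc $S_o(b,r)$. Concretely, for any vertex $v\in\Od\smallsetminus\Od_o(b,r)$ one can write $H_i(v)$ as a sum over crossing points of $S_o(b,r)$ of the exit measure, i.e. $H_i(v)=\sum_{x\in S^\delta_o(b,r^-)} \omega^\delta(v;x;\Od\smallsetminus\Od_o(b,r))\,H_i(x)$ (using that trajectories from $v$ staying in $\Od\smallsetminus\Od_o(b,r)$ either die on the Dirichlet part, contributing $0$, or cross $S_o(b,r)$). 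Then $h(v)=H_1(v)/H_2(v)$ is a weighted average (with $v$-dependent but $i$-independent weights $\propto \omega^\delta(v;x)H_2(x)$) of the values $h(x)$, $x\in S^\delta_o(b,r^-)$; hence $h(v)$ lies between $\min_x h(x)$ and $\max_x h(x)$ on the inner layer, and comparing the inner layers $S^\delta_o(b,r^-)$ and $S^\delta_o(b,r^+)$ via a one-step Harnack estimate transfers this to $S^\delta_o(b,r^+)$. This already gives $|\Phi|\le 1$ on the whole inner region; the point is to gain a definite factor $k<1$.

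To gain the contraction one needs a crossing/separation estimate: for two vertices $u,v\in\Od\smallsetminus\Od_o(b,\tfrac12 r)$ there is a uniform lower bound $\ge c_0>0$ on the $\omega^\delta(u;\,\cdot\,)$-mass and the $\omega^\delta(v;\,\cdot\,)$-mass of a \emph{common} portion $A$ of $S^\delta_o(b,r^-)$ (roughly ``the far half'' of the separating arc), by a weak-Beurling / conformal-crossing estimate of Kemppainen--Smirnov type as in the tools of~\cite{chelkak-toolbox} used elsewhere in this paper; crucially this holds uniformly over all shapes of $\Od$ near $b$ because the separating arc $S_o(b,r)$ and the fact that we stopped at radius $\tfrac12 r$ force a definite ``room'' between $u,v$ and $b$. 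Writing $h(u)$ as an average that puts weight $\ge c_0\cdot(\text{something bounded below})$ on the block $A$, and similarly for $h(v)$, one gets $h(u)\ge c_0 h_A^{\min}+(1-c_0)h^{\min}$ and $h(u)\le c_0 h_A^{\max}+(1-c_0)h^{\max}$ (and the same for $v$) with the \emph{same} block $A$; standard ``two averages sharing a common mass'' reasoning then yields $|h(u)-h(v)|\le (1-c_0)\cdot(\text{oscillation on }S^\delta_o(b,r^-))$ in the appropriate (multiplicative) normalization, which after converting back through the monotone map $t\mapsto (t-1)/(t+1)$ and a final one-step Harnack comparison between $S^\delta_o(b,r^-)$ and $S^\delta_o(b,r^+)$ gives the claimed bound with a universal $k<1$ depending only on $c_0$ and the Harnack constant of $\delta\mathbb Z^2$. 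The main obstacle is precisely establishing that uniform lower bound $c_0$ on the common exit mass with no regularity assumption on $\partial\Od$ near $b$; this is where one must invoke the uniform crossing estimates (weak Beurling, boundary Harnack) from~\cite{chelkak-toolbox}/\cite{kemppainen-smirnov} rather than any ad hoc computation, and one has to be careful that the relevant constants do not degenerate as the topology of $\Od\cap B(b,r)$ becomes complicated (many arcs in $\partial B(b,\rho)\cap\Od$).
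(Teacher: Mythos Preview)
Your identification of the statement as a boundary Harnack principle and the reduction to controlling the multiplicative oscillation of $h=H_1/H_2$ are correct, as is the harmonic-measure representation $H_i(u)=\sum_x \Z_{\Theta^\delta(r)}(u,x)H_i(x)$ with $\Theta^\delta(r):=\Od\smallsetminus\Od_o(b,r)$. However, the contraction step has a genuine gap. When you write $h(u)$ as a weighted average of boundary values $h(x)$, the weights are $p_x(u)\propto \Z_{\Theta^\delta(r)}(u,x)\,H_2(x)$, not $\omega^\delta(u;x)$ alone. A uniform lower bound on the \emph{unweighted} harmonic-measure mass of a common portion $A\subset S^\delta_o(b,r^-)$ does \emph{not} imply a lower bound on the $H_2$-weighted mass of $A$: if $H_2$ happens to be small on $A$ (for instance, if $H_2$ is essentially a Poisson kernel concentrated elsewhere on the arc), the $p$-weight of $A$ can be arbitrarily small. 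Your parenthetical ``$c_0\cdot(\text{something bounded below})$'' is exactly where this breaks; what you would actually need is a pointwise comparison $\Z_{\Theta^\delta(r)}(u,x)\asymp \Z_{\Theta^\delta(r)}(v,x)$ uniformly in $x$, which is itself a boundary Harnack statement of the same strength as the one you are trying to prove.

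The paper bypasses this weighting issue by a different, algebraic route. One first passes through the intermediate layer $S^\delta_o(b,\tfrac12 r^-)$, writing $H(u)=\sum_{u',x}\Z_{\Theta^\delta(r)}(u,u')\Z_{\Lambda^\delta(r)}(u',x)H(x)$ with $\Lambda^\delta(r):=\Theta^\delta(r)\smallsetminus\Theta^\delta(\tfrac12 r)$, and then expands $H_1(u)H_2(v)\mp H_1(v)H_2(u)$ symmetrically. This produces an identity expressing the target cross-ratio as a genuine (positive-weight) average of the boundary cross-ratios $\frac{H_1(x)H_2(y)\mp H_1(y)H_2(x)}{\ldots}$, with the $H_2$-dependence absorbed into the symmetric weights. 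The only non-trivial input is then a \emph{uniform} bound on the purely geometric quantity
\[
\biggl|\frac{\Z_{\Lambda^\delta(r)}(u',x)\Z_{\Lambda^\delta(r)}(v',y)-\Z_{\Lambda^\delta(r)}(u',y)\Z_{\Lambda^\delta(r)}(v',x)}{\Z_{\Lambda^\delta(r)}(u',x)\Z_{\Lambda^\delta(r)}(v',y)+\Z_{\Lambda^\delta(r)}(u',y)\Z_{\Lambda^\delta(r)}(v',x)}\biggr|\ \le\ k,
\]
i.e.\ a lower bound on the discrete cross-ratio $\mathrm{X}_{\Lambda^\delta(r)}(u',v';y,x)$, which follows from a discrete extremal length estimate in the annulus $B(b,r)\smallsetminus B(b,\tfrac12 r)$ via the toolbox of~\cite{chelkak-toolbox}. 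This is where the ``robust crossing input'' you anticipate actually enters, but it is applied to the four-point partition-function cross-ratio in the annular domain $\Lambda^\delta(r)$ rather than to a common-mass bound for harmonic measure.
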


\begin{proof} For shortness, denote~$\Theta^\delta(r):=\Od\setminus \Od_o(b,r)$ and $\Lambda^\delta(r):=\Theta^\delta(r)\smallsetminus\Theta^\delta(\frac{1}{2}r)$; see Fig.~\ref{fig:Cr}.
Given a discrete harmonic function \mbox{$H:\Od\to \mathbb{R}_+$} satisfying the Dirichlet boundary conditions on \mbox{$\partial\Od\smallsetminus\partial \Od_o(b,r)$} and a point~$u\in \Od\smallsetminus \Od_o(b,\frac{1}{2}r)$, one can write
\begin{align*}
H(u)\ =\sum_{x\in S^\delta_o(b,r^+)} \Z_{\Theta^\delta(r)}(u,x)H(x)\ =\!\!\!\sum_{\substack{x\in S^\delta_o(b,r^+)\\u'\in S^\delta_o(b,\frac{1}{2}r^-)}} \Z_{\Theta^\delta(r)}(u,u')\Z_{\Lambda^\delta(r)}(u',x)H(x),
\end{align*}
where~$u'$ stands for the last point in~$\Theta^\delta(\frac{1}{2}r)$ visited by a random walk trajectory running from~$u$ to~$x$.
Applying this identity four times (for both functions~$H_1,H_2$ as well as for both points~$u,v$) and rearranging terms one sees that
\begin{align*}
H_1(u)H_2(v)\mp H_1(v)H_2(u) & \\
=\ \frac{1}{2}\sum_{\substack{x,y\in S^\delta_o(b,r^+)\\u',v'\in S^\delta_o(b,\frac{1}{2}r^-)}} & \Z_{\Theta^\delta(r)}(u,u')\Z_{\Theta^\delta(r)}(v,v') \\[-12pt]  & \ \times (\Z_{\Lambda^\delta(r)}(u',x)\Z_{\Lambda^\delta(r)}(v',y)\mp\Z_{\Lambda^\delta(r)}(u',y)\Z_{\Lambda^\delta(r)}(v',x))\\  & \ \times (H_1(x)H_2(y)\mp H_2(x)H_1(y)).
\end{align*}
{Let $M:=\max_{x,y\in S_o^\delta(b,r^+)}|H_1(x)H_2(y)-H_2(x)H_1(y)|/(H_1(x)H_2(y)+H_2(x)H_1(y))$. Therefore, in order to derive the desired estimate
\[
|H_1(u)H_2(v)- H_1(v)H_2(u)|\ \le\ kM\cdot (H_1(u)H_2(v)+ H_1(v)H_2(u)),
\]
it is enough to prove that (uniformly in all the parameters involved)}
\begin{equation}
\label{x:ZZ-ZZ/ZZ+ZZ}
\biggl|\frac{\Z_{\Lambda^\delta(r)}(u',x)\Z_{\Lambda^\delta(r)}(v',y)-\Z_{\Lambda^\delta(r)}(u',y)\Z_{\Lambda^\delta(r)}(v',x)}{\Z_{\Lambda^\delta(r)}(u',x)\Z_{\Lambda^\delta(r)}(v',y)+\Z_{\Lambda^\delta(r)}(u',y)\Z_{\Lambda^\delta(r)}(v',x)}\biggr|\ \le k\,.
\end{equation}
By construction,~$\Lambda^\delta(r)$ is a simply connected domain. Without loss of generality, assume that the boundary points~$u',v',y,x$ of~$\Lambda^\delta(r)$ are listed in the counterclockwise order. Then,~\eqref{x:ZZ-ZZ/ZZ+ZZ} is equivalent to the following uniform lower bound for the \emph{discrete cross-ratio} of the quadrilateral $(\Lambda^\delta(r);u',v',y,x)$:
\[
\mathrm{X}_{\Lambda^\delta(r)}(u',v';y,x)\ :=\ \biggl[\frac{Z_{\Lambda^\delta(r)}(u',y)Z_{\Lambda^\delta(r)}(v',x)}{Z_{\Lambda^\delta(r)}(u',x)Z_{\Lambda^\delta(r)}(v',y)}\biggr]^{1/2}\ge\ \biggl[\frac{1-k}{1+k}\biggr]^{1/2}.
\]
Due to \cite[Proposition~4.5]{chelkak-toolbox} and \cite[Theorem~7.1]{chelkak-toolbox}, this estimate (with some universal constant~$k<1$) follows from the following uniform lower bound on the \emph{discrete extremal length} (aka effective resistance) between the arcs~$[u'v']$ and~$[xy]$ in~$\Lambda^\delta(r)$:
\begin{align*}
\mathrm{L}_{\Lambda^\delta(r)}([u'v']_{\Lambda^\delta(r)};[xy]_{\Lambda^\delta(r)})\ &\ge\ \mathrm{L}_{\Lambda^\delta(r)}(S^\delta_o(b,\tfrac{1}{2}r^-),S^\delta_o(b,r^+))\\ &\ge\ \const\cdot \tfrac{1}{2\pi}\log 2\ >\ 0,
\end{align*}
which holds true since the discrete and the continuous extremal lengths are uniformly comparable to each other (e.g., see \cite[Proposition~6.2]{chelkak-toolbox}) and one can replace the quadrilateral~$(\Lambda^\delta(r);u',v',x,y)$ by the annulus $B(b,r)\smallsetminus B(b,\frac{1}{2}r)$ using monotonicity properties of the extremal length.
\end{proof}

\begin{corollary} \label{cor:near-bdry=1}
In the same setup, let \mbox{$q\in\mathbb N$} and \mbox{$r>2^q\delta$} be such that~$o\not\in B_{\contOd}(b,r)$. Let~$H_1,H_2:\Od\to\mathbb R_+$ be positive discrete harmonic functions satisfying the Dirichlet boundary conditions on~$\partial \Od\setminus \partial\Od_o(b,r)$. Then, one has
\[
\max\nolimits_{u,v\in \Od\setminus \Od_o(b,2^{-q} r)}\frac{H_1(u)/H_2(u)}{H_1(v)/H_2(v)}\ \le\ \frac{1+k^q}{1-k^q}\,,
\]
with the same universal constant~$k<1$ as in Lemma~\ref{lemma:-+}.
\end{corollary}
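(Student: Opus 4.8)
The plan is to iterate Lemma~\ref{lemma:-+} exactly $q$ times, halving the radius at each step, and to repackage the resulting geometric decay into the stated bound on the ratio $H_1/H_2$.

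First I would recast the claim in the antisymmetric form used in Lemma~\ref{lemma:-+}. For positive reals $A,B$ and $t\in[0,1)$ one has the elementary implication
\[
\biggl|\frac{A-B}{A+B}\biggr|\le t\quad\Longrightarrow\quad \frac{1-t}{1+t}\le\frac{A}{B}\le\frac{1+t}{1-t}\,.
\]
Taking $A=H_1(u)H_2(v)$, $B=H_1(v)H_2(u)$ and noting that $\frac{H_1(u)/H_2(u)}{H_1(v)/H_2(v)}=A/B$, it suffices to prove that, for all $u,v\in\Od\setminus\Od_o(b,2^{-q}r)$,
\[
\biggl|\frac{H_1(u)H_2(v)-H_1(v)H_2(u)}{H_1(u)H_2(v)+H_1(v)H_2(u)}\biggr|\ \le\ k^q\,.
\]

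Next I would introduce the monotone functional that carries the induction. For $s\in(2\delta,r]$ set
\[
m^\delta(s)\ :=\ \max_{u,v\in\Od\setminus\Od_o(b,s)}\ \biggl|\frac{H_1(u)H_2(v)-H_1(v)H_2(u)}{H_1(u)H_2(v)+H_1(v)H_2(u)}\biggr|\ \in\ [0,1]\,,
\]
the upper bound $1$ being the trivial inequality $|\tfrac{A-B}{A+B}|\le 1$ for $A,B>0$. The key structural observations are: (a) $m^\delta$ is non-decreasing in $s$, since shrinking the Euclidean ball $B(b,s)$ enlarges its $o$-component $\Od_o(b,s)$ and hence shrinks the set $\Od\setminus\Od_o(b,s)$ over which the maximum is taken; and (b) the vertices of $S^\delta_o(b,s^+)$ are neighbours of $\Od\setminus\Od_o(b,s)$ lying within $O(\delta)$ of the circle $\partial B(b,s)$, so that the right-hand side of Lemma~\ref{lemma:-+} written at radius $s$ is bounded by $m^\delta(s)$ (this is the point where the hypothesis $r>2^q\delta$ is used: it keeps every radius that we shall use comfortably above the mesh, which makes both this inclusion and Lemma~\ref{lemma:-+} itself applicable). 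Granting (b), Lemma~\ref{lemma:-+} at radius $s$ reads $m^\delta(\tfrac12 s)\le k\cdot m^\delta(s)$.

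Then I would run the iteration: apply Lemma~\ref{lemma:-+} successively at $s=r,\ \tfrac12 r,\ \dots,\ 2^{-(q-1)}r$, each of which exceeds $2\delta$ precisely because $r>2^q\delta$ and satisfies $o\notin B_{\contOd}(b,s)\subseteq B_{\contOd}(b,r)$. Composing the resulting $q$ inequalities gives
\[
m^\delta(2^{-q}r)\ \le\ k\cdot m^\delta(2^{-(q-1)}r)\ \le\ \dots\ \le\ k^q\cdot m^\delta(r)\ \le\ k^q\,,
\]
which is exactly what the first paragraph requires; passing back through the elementary implication with $t=k^q$ and taking the maximum over $u,v$ completes the proof. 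The only genuinely delicate step is observation (b) — the bookkeeping needed to certify that the maximum on the right-hand side of Lemma~\ref{lemma:-+} at scale $2^{-j}r$ is controlled by $m^\delta$ at the \emph{same} scale, so that the $q$ applications chain without any loss in the exponent; everything else is the Möbius-type estimate of the first paragraph together with the monotonicity of $m^\delta$.
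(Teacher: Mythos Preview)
Your approach is exactly that of the paper: iterate Lemma~\ref{lemma:-+} at the dyadic radii, seed with the trivial bound $1$, and translate the resulting inequality $|H_1(u)H_2(v)-H_1(v)H_2(u)|\le k^q(H_1(u)H_2(v)+H_1(v)H_2(u))$ into the stated ratio bound.

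There is, however, a slip in your observation (b). The vertices of $S^\delta_o(b,s^+)$ lie \emph{outside} the ball $B(b,s)$, on the $o$-side of the separating arc, and hence belong to $\Od_o(b,s)$ rather than to $\Od\setminus\Od_o(b,s)$. So the inclusion you need for ``$\max$ over $S^\delta_o(b,s^+)\le m^\delta(s)$'' is false as written; being \emph{neighbours} of $\Theta^\delta(s)$ does not place them inside it. The easy fix is to observe instead that these vertices lie at distance at most $s+O(\delta)<2s$ from $b$, hence inside $B(b,2s)$, hence in $\Od\setminus\Od_o(b,2s)$. This gives $\max_{S^\delta_o(b,s^+)}\le m^\delta(2s)$, and the chain becomes $m^\delta(2^{-j}r)\le k\cdot m^\delta(2^{-(j-2)}r)$, which still yields the geometric decay (at worst with exponent $q-1$ rather than $q$, an immaterial shift). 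This is precisely the ``bookkeeping'' you flag as delicate; once corrected, your argument matches the paper's.
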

\begin{proof} This estimate follows easily by iterating $q$ times the result of Lemma~\ref{lemma:-+}, (note that the ratio inside the absolute value is always less than $1$), which gives
$|H_1(u)H_2(v)-H_2(u)H_1(v)|\le k^q\cdot (H_1(u)H_2(v)+H_2(u)H_1(v))$.
\end{proof}

\subsection{Convergence of the massive Green function}\label{sect:mGreen} In this section we prove an analogue of the uniform convergence~\eqref{eq:G-conv} for massive Green functions~$\Zm_{\Od}(u^\delta,v^\delta)$.
To prove this result, Proposition~\ref{prop:Gm-conv}, we need several preliminary facts.

\begin{lemma}\label{lem:crossing of annuli}
Let $(X_n)_{n\in\mathbb N}$ be a simple random walk with killing rate $m^2\delta^2$ on $\delta\mathbb{Z}^2$. For an annulus $A=A(v_0,r_1,r_2)$, denote by $E(A)$ the event that $X_n$, started at $v \in A\cap \delta\mathbb{Z}^2$, makes a non-trivial loop around $v_0$ before exiting~$A$, that is, there exists $0 \leq s<k<\tau_{\mathbb{C}\smallsetminus A}$ such that~$X_s=X_k$ and $X|_{[s,k]}$ is not null-homotopic in~$A$. There exists a universal constant such that one has
\[
\mathbb{P}^{(m)}_v[\,E(A(v_0,r,2r))\,]\ \geq\ \const\ >\ 0
\]
for all~$\delta\le r\le m^{-1}$ and all~$v\in\delta\mathbb Z^2$ such that~$\frac{3}{2}r-\delta\le |v-v_0|\le \frac{3}{2}r+\delta$.
\end{lemma}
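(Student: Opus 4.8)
The statement is a standard ``massive version'' of the fact that a critical (or sub-critical) random walk on $\delta\mathbb{Z}^2$ crosses a fixed round annulus and winds around it with probability bounded below, uniformly in $\delta$. The plan is to reduce the massive statement to the massless one via the elementary observation that killing can only decrease path weights, and that on the relevant scale the total killing probability is bounded away from $1$. Concretely, start a massive walk $X$ at $v$ with $\tfrac32 r-\delta\le|v-v_0|\le\tfrac32 r+\delta$ (so $v$ is well inside $A:=A(v_0,r,2r)$, at distance $\ge \tfrac12 r-\delta$ from $\partial A$). By the Markov property, $\mathbb{P}^{(m)}_v[E(A)]$ equals the massless probability of the same event, but with each trajectory reweighted by $(1-m^2\delta^2)^{\#\pi}$; equivalently, $\mathbb{P}^{(m)}_v[E(A)]\ge (1-m^2\delta^2)^{N}\cdot \mathbb{P}^{(0)}_v[E(A)\cap\{\pi\text{ uses}\le N\text{ steps}\}]$ for any cutoff $N$.

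First I would recall (or quote) the massless crossing-and-winding estimate: there is a universal $c_1>0$ with $\mathbb{P}^{(0)}_v[E(A(v_0,r,2r))]\ge c_1$ for all $\delta\le r$ and all $v$ in the middle circle; this is a classical fact (e.g.\ it follows from the invariance principle for $\delta\mathbb{Z}^2$ together with the corresponding positive-probability event for planar Brownian motion, or from standard discrete harmonic-measure/crossing estimates as in~\cite{kemppainen-smirnov} or~\cite{chelkak-toolbox}). Next I would localize this event in time: since on the scaling of $A$ the walk, conditioned to perform the winding loop before exiting, can be taken to do so within $C(r/\delta)^2$ steps with probability at least $\tfrac12 c_1$ (the expected exit time from $B(v_0,2r)$ is $O((r/\delta)^2)$, so by Markov's inequality the walk exits, hence in particular the winding either happens or the walk leaves, within $C(r/\delta)^2$ steps with probability close to $1$; intersecting with $E(A)$ keeps probability $\ge \tfrac12 c_1$ for $C$ large). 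Call this time bound $N=C\lfloor r^2\delta^{-2}\rfloor$.

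Then the massive/massless comparison gives
\[
\mathbb{P}^{(m)}_v[E(A(v_0,r,2r))]\ \ge\ (1-m^2\delta^2)^{N}\cdot \tfrac12 c_1\ \ge\ \exp(-2m^2\delta^2 N)\cdot\tfrac12 c_1,
\]
using $1-x\ge e^{-2x}$ for $x\in[0,\tfrac12]$ (valid since $m^2\delta^2\le 1$, in fact $\le\tfrac14$ in the regime $\delta\le m^{-1}$, or one just assumes $m\delta\le\tfrac12$). Since $m^2\delta^2 N=Cm^2 r^2\cdot(\delta^2/\delta^2)=Cm^2r^2\le C$ by the hypothesis $r\le m^{-1}$, the prefactor $\exp(-2m^2\delta^2N)$ is bounded below by a universal constant $e^{-2C}$. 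Hence $\mathbb{P}^{(m)}_v[E(A(v_0,r,2r))]\ge \tfrac12 c_1 e^{-2C}=:\const>0$, uniformly in $\delta,r,v$ subject to the stated constraints, which is the claim.

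\textbf{Main obstacle.} The only genuinely nontrivial input is the quantitative \emph{time-localized} massless winding estimate, i.e.\ that with uniformly positive probability the walk makes a non-contractible loop in $A(v_0,r,2r)$ \emph{within $O((r/\delta)^2)$ steps}. The winding part (rather than mere crossing) requires a little care: one can obtain it by chaining four ``quarter-annulus crossing'' events (forcing the walk to travel around $v_0$ through four overlapping sub-regions of $A$) each of uniformly positive probability by the discrete Beurling/crossing estimates already invoked in the paper, or by coupling with Brownian motion on the diffusive time scale $T(r/\delta)^2$ and using that Brownian motion winds around $v_0$ before leaving $A$ with positive probability; the time bound then comes from the exit-time estimate for $B(v_0,2r)$. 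Everything else is bookkeeping: the reduction to the massless case is a one-line weight comparison, and the regime $r\le m^{-1}$ is exactly what makes the killing factor $(1-m^2\delta^2)^{O(r^2\delta^{-2})}=O(1)$.
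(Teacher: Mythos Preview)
Your proof is correct and follows essentially the same idea as the paper's, just organized differently. The paper argues directly with the massive walk: it decomposes the winding event into a finite chain of ``exit a rectangle $[u-\tfrac14 r,u+\tfrac14 r]\times[u-\tfrac18 r,u+\tfrac18 r]$ through a prescribed side without dying'' events, each of which has uniformly positive probability for the massive walk since the expected number of steps is $O((r/\delta)^2)$ and $m^2\delta^2\cdot(r/\delta)^2=m^2r^2\le 1$. You instead separate the two ingredients: first obtain the massless winding estimate with the diffusive time localization $N=C(r/\delta)^2$, then apply the killing bound $(1-m^2\delta^2)^N\ge e^{-O(m^2r^2)}\ge\const$ globally. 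Your ``Main obstacle'' paragraph in fact describes exactly the paper's rectangle-chaining construction as one route to the time-localized massless estimate, so the two arguments converge.

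One small slip: ``$\delta\le m^{-1}$'' only gives $m^2\delta^2\le 1$, not $\le\tfrac14$; but as you note, the standing assumption elsewhere in the paper is $m\delta\le\tfrac12$, under which $1-x\ge e^{-2x}$ applies cleanly. This does not affect the argument.
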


\begin{proof} The desired event can be easily constructed from a few events of a type that a random walk started at the center~$u$ of a rectangle~$[u-\frac{1}{4}r,u+\frac{1}{4}r]\times[u-\frac{1}{8}r,u+\frac{1}{8}r]$ exists it through a prescribed side \emph{not dying along the way}. As we require that the killing rate~$m^2\delta^2$ is scaled accordingly to the mesh size and that~$r\le m^{-1}$, standard estimates imply that the probability of each of these events is uniformly bounded from below by a universal constant, independent of~$\delta$ and~$r$.
\end{proof}

Given~$m>0$, we say that a function~$H$ is \emph{massive discrete harmonic} at a vertex~$v\in\delta\mathbb Z^2$ if
\begin{equation}
\label{eq:mharm-def}
H(v)\ =\ \frac{1-m^2\delta^2}{4}\sum\nolimits_{v_1\in\delta\mathbb Z^2:v_1\sim v} H(v_1).
\end{equation}
Trivially, if $H$ is positive, then it satisfies the maximum principle:~$H(v)$ cannot be bigger than all four values~$H(v_1)$ at~$v_1\sim v$. Using Lemma~\ref{lem:crossing of annuli} one can easily prove {a priori regularity of} massive discrete harmonic functions on~$\delta\mathbb Z^2$.

\begin{lemma} \label{lem:regularity} There exists universal constants~$C,\beta>0$ such that the following holds: for each
	positive massive discrete harmonic function~$H$ defined in the disc $B(v_0,2r)\cap \delta\mathbb Z^2$ with~$r\le m^{-1}$ and for each~$v_1,v_2\in B(v_0,r)\cap\delta\mathbb Z^2$ one has
\[
|H(v_2)-H(v_1)|\ \leq\ C\cdot (|v_2-v_1|/r)^\beta\cdot \max\nolimits_{v\in B(v_0,2r)\cap \delta\mathbb Z^2} H(v).
\]
 \end{lemma}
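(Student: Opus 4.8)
The plan is to run the classical ``harmonic-functions-are-Hölder'' argument, replacing the usual ``a random walk crosses an annulus and disconnects a ball'' step by its massive analogue supplied by Lemma~\ref{lem:crossing of annuli}. The key mechanism is an oscillation-reduction estimate: there is a universal $\theta<1$ such that for every positive massive discrete harmonic $H$ on $B(v_0,2\rho)\cap\delta\mathbb Z^2$ with $\rho\le m^{-1}$ one has
\[
\mathrm{osc}_{B(v_0,\rho/2)}H\ \le\ \theta\cdot \mathrm{osc}_{B(v_0,2\rho)}H,
\]
where $\mathrm{osc}_A H:=\max_A H-\min_A H$. Iterating this over the dyadic scales $2\rho,\rho,\rho/2,\dots$ between $|v_2-v_1|$ and $r$ gives the stated Hölder bound with $\beta:=\log_2(1/\theta)$ and a suitable universal $C$ (the bottom scale, where dyadic iteration stops, being harmless since $H$ is controlled by its maximum there).

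First I would reduce to the oscillation estimate. Fix a dyadic annulus $A:=A(v_0,\rho,2\rho)$ with $\delta\le\rho\le m^{-1}$, set $m_{2\rho}:=\min_{B(v_0,2\rho)}H\ge 0$ and $M_{2\rho}:=\max_{B(v_0,2\rho)}H$, and apply the argument to the two nonnegative massive discrete harmonic functions $H-m_{2\rho}$ and $M_{2\rho}-H$; it suffices to treat one of them, say $g:=H-m_{2\rho}\ge 0$. Pick $x_0\in\partial B(v_0,\tfrac32\rho)\cap\delta\mathbb Z^2$ (at distance between $\tfrac32\rho-\delta$ and $\tfrac32\rho+\delta$ from $v_0$) where $g(x_0)\ge \tfrac12 M_{2\rho}-\tfrac12 m_{2\rho}$... more precisely, one picks $x_0$ so that $g(x_0)$ is comparable to $\mathrm{osc}_{B(v_0,2\rho)}H$ — either $g$ or $M_{2\rho}-H$ achieves a value $\ge\tfrac12\mathrm{osc}$ somewhere on the middle sphere by a mean-value/intermediate argument, which is why we carry both functions. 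Then I would run the massive random walk $(X_n)$ from an arbitrary $v\in B(v_0,\rho/2)\cap\delta\mathbb Z^2$ and decompose $g(v)$ via the optional-stopping (mass-function) representation
\[
g(v)\ =\ \mathbb E_v\big[(1-m^2\delta^2)^{\tau}\,g(X_\tau)\big],
\]
where $\tau$ is the exit time of $B(v_0,2\rho)$ (valid since $g$ is massive harmonic and positive, so the sum over trajectories converges). On the event $E(A)$ of Lemma~\ref{lem:crossing of annuli} — the walk started near the middle sphere makes a non-contractible loop in $A$ before leaving $A$ — the walk, by the strong Markov property, visits every ``side'' of the annulus and in particular, before exiting $B(v_0,2\rho)$, passes within a universally bounded number of steps of the point $x_0$ where $g$ is large; combining this with the maximum principle (so that $g\ge c\,g(x_0)$ on a whole macroscopic piece) and with Lemma~\ref{lem:crossing of annuli} together with the uniform bound $(1-m^2\delta^2)^{\tau}\ge \mathrm{const}>0$ on the relevant (bounded-in-macroscopic-scale number of) steps — here one crucially uses $\rho\le m^{-1}$, so that crossing $O(1)$ rectangles of size $\sim\rho$ costs only a universally bounded power of $(1-m^2\delta^2)$ — yields
\[
\min_{B(v_0,\rho/2)} g\ \ge\ \mathrm{const}\cdot g(x_0)\ \ge\ \mathrm{const}\cdot \mathrm{osc}_{B(v_0,2\rho)}H.
\]
Applying the same to $M_{2\rho}-H$ gives $\min_{B(v_0,\rho/2)}(M_{2\rho}-H)\ge\mathrm{const}\cdot\mathrm{osc}_{B(v_0,2\rho)}H$, and adding the two inequalities produces the oscillation reduction with $\theta=1-\mathrm{const}<1$.

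The remaining work is bookkeeping: iterate the oscillation estimate over $\lfloor\log_2(r/|v_2-v_1|)\rfloor$ dyadic scales (all admissible because the largest scale is $\le 2r\le$ the domain radius and all scales stay $\le m^{-1}$), absorb the comparison between $\mathrm{osc}$ and $\max$ (trivially $\mathrm{osc}\le\max$ since $H\ge0$), and handle the bottom scale — where $|v_2-v_1|$ and the local dyadic radius are comparable — by the same oscillation bound once, which suffices since $v_1,v_2$ then lie in a common ball of radius $\lesssim|v_2-v_1|$.

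The main obstacle I expect is making the ``the walk comes close to the point $x_0$ where $g$ is large, within a universally bounded number of $\sim\rho$-scale steps, without dying'' step genuinely uniform in $\delta,r,m$. This is where one has to be careful to only ever chain together $O(1)$ crossings of mesoscopic rectangles (each of size $\sim\rho\le m^{-1}$), so that the total killing weight is bounded below by a universal constant via Lemma~\ref{lem:crossing of annuli}; doing this via a growing number of microscopic steps would be fatal. A clean way to organize it is: use Lemma~\ref{lem:crossing of annuli} to get, with universal probability, a non-contractible loop in $A(v_0,\rho,2\rho)$, which in particular forces the walk to enter the ball $B(x_0,\tfrac14\rho)$ before exiting $B(v_0,2\rho)$; then invoke the maximum principle to say $g\ge c_1 g(x_0)$ throughout $B(x_0,\tfrac14\rho)\cap B(v_0,2\rho)$ — this replaces ``hitting $x_0$ exactly'' by ``hitting a macroscopic ball on which $g$ is already large'', eliminating any need for microscopic-scale control. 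Everything else is the standard argument, and the hypothesis $r\le m^{-1}$ is exactly what keeps all the random-walk estimates scale-invariant.
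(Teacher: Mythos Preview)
Your oscillation-reduction scheme has a genuine gap: the functions $g:=H-m_{2\rho}$ and $M_{2\rho}-H$ are \emph{not} massive discrete harmonic. Unlike ordinary harmonicity, the massive condition~\eqref{eq:mharm-def} is destroyed by adding a constant: for $c>0$,
\[
(H-c)(v)\ =\ \tfrac{1-m^2\delta^2}{4}\sum\nolimits_{v'\sim v}(H-c)(v')\ -\ m^2\delta^2\,c,
\]
so $H-c$ is only massive \emph{sub}-harmonic. The optional-stopping identity you write for $g$ therefore fails by an error of order $m_{2\rho}\cdot(1-\mathbb E_v[(1-m^2\delta^2)^\tau])\asymp m_{2\rho}\cdot m^2\rho^2$, which need not be small compared with $\mathrm{osc}_{B(v_0,2\rho)}H$ (take $H$ nearly constant and large). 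The Harnack-type step ``$g\ge c_1 g(x_0)$ on $B(x_0,\tfrac14\rho)$'' --- which you call a maximum principle --- likewise has no reason to hold for a merely sub-harmonic $g$. Since $H-m_{2\rho}$ is sub- while $M_{2\rho}-H$ is super-harmonic, the symmetry the two-sided oscillation argument needs is simply absent in the massive setting.

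The paper sidesteps this by never shifting $H$. It proves the one-sided bound $H(v_1)\ge (1-C(|v_2-v_1|/r)^\beta)H(v_2)$ directly: the maximum principle for the massive-harmonic $H$ itself yields a nearest-neighbour path $\gamma$ from $v_2$ to $\partial B(v_0,2r)$ along which $H\ge H(v_2)$; then Lemma~\ref{lem:crossing of annuli}, applied in each concentric annulus $A(v_1,2^k|v_2-v_1|,2^{k+1}|v_2-v_1|)$, shows that the massive walk from $v_1$ hits $\gamma$ before dying or exiting with probability at least $1-C(|v_2-v_1|/r)^\beta$; optional stopping for $H$ (not $H-c$) gives the inequality. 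Swapping $v_1,v_2$ gives the reverse, and together these yield the Lemma. If you want to keep an iterative structure, iterate this one-sided Harnack inequality rather than an oscillation bound.
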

\begin{proof} Without loss of generality, assume that~$|v_2-v_1|\le \frac{1}{4}r$. The maximum principle yields the existence of a path $\gamma$ connecting $v_2$ to the boundary of~$B(v_0,2r)$ such that the values of~$H$ along~$\gamma$ are larger than $H(v_2)$. Consider a family of concentric annuli
\[
A_k:=A(v_1,2^k|v_2-v_1|,2^{k+1}|v_2-v_1|),\quad k=0,\dots,\lfloor\tfrac{1}{2}\log_2(r/|v_2-v_1|)\rfloor.
\]
Due to Lemma~\ref{lem:crossing of annuli}, for each~$k$ the probability that the random walk with killing rate~$m^2\delta^2$ started from $v_1$ is killed or does not hit~$\gamma$ while crossing~$A_k$ is uniformly bounded away from~$1$. At the same time, standard estimates imply that the probability that this random walk is killed before crossing all~$A_k$ is uniformly bounded from above by $\const\cdot m^2r|v_2-v_1|\le\const\cdot |v_2-v_1|/r$. Hence, the probability that this random walk hits~$\gamma$ before dying or exiting~$B(v_0,2r)$ is at least~$1-C(|v_2-v_1|/r)^\beta$. Therefore,
\[
H(v_1)\ \ge\ [1-C(|v_2-v_1|/r)^\beta]\cdot H(v_2),
\]
with universal constants~$C,\beta>0$.
\end{proof}

We also need the so-called weak-Beurling estimate which applies to both discrete massive harmonic and usual ($m=0$) discrete harmonic functions.

\begin{lemma} \label{lem:weak Beurling} Let~$\Od\subset\delta\mathbb Z^2$ be a simply connected discrete domain,~$c^\delta\in\partial\Od$ be a boundary point, and~$r\le m^{-1}$. Let~$H$ be discrete massive harmonic function defined in the~$r$-vicinity~$B_{\Od}(c,r)$ of~$c$ in~$\Od$ and let~$H$ satisfy the Dirichlet boundary conditions on~$\partial B_{\Od}(c,r)\cap\partial\Od$. There exist universal constants~$C,\beta>0$ such that one has
\[
|H(v)|\ \leq\ C\cdot (\rho_{\Od}(c,v)/r)^\beta\cdot \max\nolimits_{u\in B_{\Od}(c,r)} |H(u)|
\]
for all~$v\in B_{\Od}(c,r)$, where~$\rho_{\Od}(c,v)$ and~$B_{\Od}(c,r)$ are defined by~\eqref{eq:rhoB-def}.
\end{lemma}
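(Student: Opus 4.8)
\textbf{Proof plan for the weak-Beurling estimate (Lemma~\ref{lem:weak Beurling}).}
The plan is to mimic the argument already used in the proof of Lemma~\ref{lem:regularity}, replacing the disc $B(v_0,2r)$ by the inner-metric vicinity $B_{\Od}(c,r)$ and crossing a dyadic family of annuli centered at the boundary point $c$. First I would fix $v\in B_{\Od}(c,r)$, set $\rho:=\rho_{\Od}(c,v)$, and (assuming without loss of generality that $\rho\le\frac14 r$, the complementary case being trivial by the maximum principle) consider the concentric annuli
\begin{equation*}
A_k\ :=\ A(c,\,2^k\rho,\,2^{k+1}\rho),\qquad k=0,1,\dots,\bigl\lfloor\log_2(r/\rho)\bigr\rfloor-1,
\end{equation*}
all of which are contained in the region where $H$ is defined and have outer radius $\le r\le m^{-1}$, so Lemma~\ref{lem:crossing of annuli} applies to each of them. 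Since $H$ vanishes on $\partial B_{\Od}(c,r)\cap\partial\Od$, a massive random walk started at $v$ and run until it exits $B_{\Od}(c,r)$ or dies contributes to $H(v)$ only through the portion of $\partial B_{\Od}(c,r)$ lying ``at scale $r$'' (i.e.\ on the arc $S_v(c,r)$ separating $v$ from $c$); I would phrase this via the representation $|H(v)|\le \mathbb P^{(m)}_v[\,\text{exit }B_{\Od}(c,r)\text{ through the far part of the boundary before dying}\,]\cdot\max_{B_{\Od}(c,r)}|H|$.

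The key point is that in order to reach that far part of the boundary, the walk started near $c$ (at inner distance $\rho$) must cross \emph{every} annulus $A_k$ without dying. By Lemma~\ref{lem:crossing of annuli}, each time the walk is in $A_k$ and tries to cross it, with probability bounded below by a universal constant it instead makes a non-contractible loop around $c$ inside $A_k$; combined with the (standard, $m=0$) fact that a discrete harmonic measure argument forces it to eventually either die or close up such a loop before escaping, one gets that each annulus is crossed (without the loop event occurring and without dying) with probability at most some universal $q<1$. Since the annuli are disjoint and there are $\asymp\log_2(r/\rho)$ of them, the probability of crossing all of them is at most $q^{\,\log_2(r/\rho)}=(\rho/r)^{\beta}$ with $\beta:=-\log_2 q>0$. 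Multiplying by $\max_{B_{\Od}(c,r)}|H|$ gives the claimed bound, with the killing only helping (one can equally run the argument with $m=0$ and then note that killing only decreases the relevant probability, which is why the statement covers both cases).

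The main obstacle, as in Lemma~\ref{lem:regularity}, is the geometric/topological bookkeeping rather than any analytic difficulty: one must be careful that the annuli $A_k$ are taken in the \emph{inner} metric of $\Od$ near the possibly very rough prime end $c$ — more precisely, that ``crossing $A_k$ towards the far boundary'' is correctly identified with separating the relevant arc $S_v(c,2^{k+1}\rho)$ from $c$ inside $\Od\cap A_k$, so that Lemma~\ref{lem:crossing of annuli} (stated for genuine Euclidean annuli on $\delta\mathbb Z^2$, with the walk not exiting $A$) can be invoked on the induced subgraph. The clean way around this is to observe that the event ``make a non-contractible loop around $c$ inside $\Od\cap A_k$ before leaving $\Od\cap A_k$'' contains the corresponding event for the full annulus $A_k$ intersected with the connected component under consideration, and that monotonicity of the crossing probability under restriction to a subdomain (which is immediate from the random-walk description) lets one transfer the uniform lower bound of Lemma~\ref{lem:crossing of annuli}; once this is granted, the iteration over $k$ and the resulting power-law are exactly as above. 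The remaining estimates — that a walk starting at inner distance $\rho$ cannot escape to distance $r$ without passing through each $A_k$, and summing the independent-across-scales failure probabilities — are routine and parallel to the proof of Lemma~\ref{lem:regularity}.
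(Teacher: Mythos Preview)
Your approach is correct and essentially identical to the paper's: the paper simply says ``the proof is similar to the proof of Lemma~\ref{lem:regularity}'' and notes that the massive walk started at $v$ hits $\partial\Od$ or dies before reaching $\partial B_{\Od}(c,r)\smallsetminus\partial\Od$ with probability at least $1-C(\rho_{\Od}(c,v)/r)^\beta$, which is exactly the annulus-crossing iteration you describe. One small point of exposition: the reason the loop event in each $A_k$ forces hitting $\partial\Od$ is purely topological---since $\Od$ is simply connected and $c\notin\Int\Od$, the connected complement $\delta\mathbb Z^2\smallsetminus\Int\Od$ crosses every annulus $A_k$, so any non-contractible loop around $c$ in $A_k$ must intersect it---rather than via the ``harmonic measure argument'' you mention, and your event-containment in the final paragraph is stated backwards (a loop around $c$ \emph{inside} $\Od\cap A_k$ is in fact impossible).
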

\begin{proof} The proof is similar to the proof of Lemma~\ref{lem:regularity}: the simple random walk with killing rate~$m^2\delta^2$ started at~$v$ hits~$\partial\Od$ or dies before reaching $\partial B_{\Od}(c,r)\smallsetminus \partial\Od$ with probability at least $1-C\cdot (\rho_{\Od}(c,v)/r)^\beta$.
\end{proof}
We are now ready to prove an analogue of Proposition~\ref{prop:G-close} for massive Green functions. Given a simply connected domain~$\Lambda\subset\mathbb C$ we denote by~$G^{(m)}_\Lambda(u,v)$ the integral kernel of the operator~$(-\Delta_\Lambda+m^2)^{-1}$, where~$\Delta_\Lambda$ stands for the Laplacian in~$\Lambda$ with Dirichlet boundary conditions. In other words, the \emph{massive Green function}~$G^{(m)}_\Lambda(u,\cdot)$ is the unique solution to the equation~$(-\Delta+m^2)G^{(m)}_\Lambda(u,\cdot)=\delta_u(\cdot)$, understood in the sense of distributions, with Dirichlet boundary conditions at~$\partial\Lambda$.

\begin{proposition}\label{prop:Gm-conv} Let~$\Omega\subset B(0,R)$ be a simply connected planar domain and $u,v\in\Omega$ be two distinct points of~$\Omega$. Assume that discrete domains~$\Omega^\delta\subset B(0,R)$ approximate $\Omega$ (in the Cara\-th\'eodory topology with respect to~$u$ or~$v$). Then,
\begin{equation}
\label{eq:Gm-conv}
\Zm_{\Od}(u^\delta,v^\delta)\ \to\ G^{(m)}_\Omega(u,v)\quad \text{as}\ \ \delta\to 0.
\end{equation}
Moreover, for each~$r>0$ this convergence is uniform provided that~$u$ and~$v$ are jointly~$r$-inside~$\Omega$ and~$|u-v|\ge r$.
\end{proposition}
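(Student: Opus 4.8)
The plan is to mimic the proof strategy for the massless case (Proposition~\ref{prop:G-close} / Corollary~\ref{cor:G-conv} as presented in~\cite{chelkak-smirnov-11}), but since we do not prove a stand-alone uniform estimate in the spirit of~\eqref{eq:G-close} here, we run the compactness argument directly. First I would reduce to the case of a uniformly fixed separation: assume $u,v$ are jointly $r$-inside $\Omega$ with $|u-v|\ge r$, and suppose for contradiction that~\eqref{eq:Gm-conv} fails along some sequence $\delta\to 0$, i.e. $|\Zm_{\Od}(u^\delta,v^\delta)-G^{(m)}_\Omega(u,v)|\ge\varepsilon_0$. Since all domains sit in $B(0,R)$, by Carath\'eodory precompactness we may pass to a subsequence along which $\contOd\to\Omega$ (this is already assumed, but the point is we also get control of the approximating sequence); fixing the point $v^\delta\to v$ jointly $r$-inside, the functions $v^\delta\mapsto \Zm_{\Od}(u^\delta,v^\delta)$ are positive massive discrete harmonic in $v^\delta$ away from $u^\delta$.

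Second, I would establish a uniform a~priori upper bound on $\Zm_{\Od}(u^\delta,v^\delta)$ away from the diagonal. The natural route is the trivial domain monotonicity $\Zm_{\Od}\le \Z_{\Od}$ together with the uniform bound on the massless Green function from Proposition~\ref{prop:G-close}/Corollary~\ref{cor:G-conv} (uniform for $u,v$ jointly $r'$-inside with $|u-v|\ge r'$, applied on slightly smaller interior balls); near the diagonal one uses that the full-plane discrete massless Green function differs from $-\frac{1}{2\pi}\log|u^\delta-v^\delta|$ by a bounded amount. Together with the a~priori regularity provided by Lemma~\ref{lem:regularity} (applicable since one can cover compact subsets of $\Omega$ by discs $B(v_0,2\rho)$ with $\rho\le m^{-1}$, splitting a path into finitely many such steps if necessary — here the hypothesis $0<\delta<m^{-1}$ and the uniform boundedness of $\Omega$ matter) and the weak-Beurling estimate Lemma~\ref{lem:weak Beurling} near $\partial\Od$, the family $\{v\mapsto \Zm_{\Od}(u^\delta,v)\}$ is precompact in the topology of uniform convergence on compact subsets of $\Omega\smallsetminus\{u\}$ with a boundary decay estimate that is uniform in $\delta$. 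Passing to a further subsequence, $\Zm_{\Od}(u^\delta,\cdot)\to h(\cdot)$ for some limiting function $h$ on $\Omega\smallsetminus\{u\}$.

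Third, I would identify the limit $h$ with $G^{(m)}_\Omega(u,\cdot)$. The defining massive harmonicity~\eqref{eq:mharm-def} passes to the limit: rewriting it as $\Delta^\delta H = m^2 H$ where $\Delta^\delta$ is the (appropriately normalized) discrete Laplacian, and using that discrete harmonic (or near-harmonic) functions with uniform bounds converge to solutions of the corresponding continuous PDE, one gets $(-\Delta+m^2)h=0$ in $\Omega\smallsetminus\{u\}$. The boundary behaviour from the uniform weak-Beurling bound shows $h$ has zero Dirichlet boundary values on $\partial\Omega$ (more precisely $h\to 0$ at every boundary point, hence $h$ extends by $0$ and is in the right function space). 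The singularity at $u$ is handled by comparison with the massless case: $\Zm_{\Od}(u^\delta,v^\delta)=\Z_{\Od}(u^\delta,v^\delta)-m^2\delta^2\sum_{w^\delta}\Z_{\Od}(u^\delta,w^\delta)\Zm_{\Od}(w^\delta,v^\delta)$ by Lemma~\ref{lem:Zm(w,z)=Z-ZmZ} (here I would use the $m=0$ identity for the source normalization); as $\delta\to0$ the sum is a Riemann sum converging to $\int_\Omega G_\Omega(u,w)G^{(m)}_\Omega(w,v)\,dw$, so $h$ satisfies $h = G_\Omega(u,\cdot) - m^2\int_\Omega G_\Omega(u,w)h(w)\,dw$, which forces the correct logarithmic singularity $h(v)\sim -\frac{1}{2\pi}\log|v-u|$ at $u$ and pins down $h=G^{(m)}_\Omega(u,\cdot)$ by uniqueness of the resolvent kernel. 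This contradicts $|h(v)-G^{(m)}_\Omega(u,v)|\ge\varepsilon_0$, proving~\eqref{eq:Gm-conv}; the uniformity for jointly $r$-inside $u,v$ follows because all the estimates above were uniform in the approximating sequence, so the same contradiction argument applies to an arbitrary sequence of such configurations.

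The main obstacle I expect is the control near the source point $u$ and the justification that the discrete convolution identity from Lemma~\ref{lem:Zm(w,z)=Z-ZmZ} passes to the limit as a genuine integral equation: one must show the Riemann sums $m^2\delta^2\sum_{w^\delta\in\Int\Od}\Z_{\Od}(u^\delta,w^\delta)\Zm_{\Od}(w^\delta,v^\delta)$ converge to $m^2\int_\Omega G_\Omega(u,w)G^{(m)}_\Omega(w,v)\,dw$, which requires a uniform integrable upper bound on the integrand near the two singularities $w\approx u$ and $w\approx v$ (logarithmic, hence integrable in 2D) and near $\partial\Omega$ (where the weak-Beurling power decay does the job), plus the a.e.\ pointwise convergence already obtained away from $u,v,\partial\Omega$ — then dominated convergence finishes it. Everything else is a fairly routine adaptation of the massless machinery of~\cite{chelkak-smirnov-11}, with Lemmas~\ref{lem:crossing of annuli}, \ref{lem:regularity} and~\ref{lem:weak Beurling} supplying exactly the discrete-analytic input (equicontinuity, maximum principle, boundary decay) that the massive setting needs.
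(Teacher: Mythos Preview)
Your overall structure---uniform a~priori bound via $\Zm\le\Z$, equicontinuity from Lemma~\ref{lem:regularity}, boundary decay from Lemma~\ref{lem:weak Beurling}, extraction of a subsequential limit~$h$---matches the paper exactly. The divergence is in how you identify~$h$ with~$G^{(m)}_\Omega(u,\cdot)$. You propose to pass the discrete resolvent identity (Lemma~\ref{lem:Zm(w,z)=Z-ZmZ}) to the limit and obtain the integral equation $h=G_\Omega(u,\cdot)-m^2\int_\Omega G_\Omega(u,w)h(w)\,dA(w)$, then invoke uniqueness. The paper instead tests against a smooth compactly supported~$\phi$: discrete integration by parts gives $\phi(u^\delta)=\delta^2\sum H^\delta(v^\delta)(m^2\phi-(1{-}m^2\delta^2)\Delta^\delta\phi)(v^\delta)$, and sending~$\delta\to 0$ yields $\phi(u)=\int_\Omega h(m^2\phi-\Delta\phi)\,dA$ directly, i.e.\ $(-\Delta+m^2)h=\delta_u$ in the sense of distributions.

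Your route is sound, but the paper's is noticeably cheaper precisely at the point you flag as the main obstacle. Because~$\phi$ has compact support in~$\Omega$ (hence in~$\contOd$ for small~$\delta$), the integration-by-parts argument never sees~$\partial\Od$ at all and needs only the crude logarithmic upper bound near~$u$ to control the Riemann sum. By contrast, passing the full resolvent identity to the limit forces you to control $m^2\delta^2\sum_{w^\delta}\Z_{\Od}(u^\delta,w^\delta)\Zm_{\Od}(w^\delta,v^\delta)$ over \emph{all} of~$\Int\Od$, and under mere Carath\'eodory convergence~$\Od$ may contain large ``fjords'' invisible in~$\Omega$; handling this requires a region-splitting argument of the type the paper later carries out in Proposition~\ref{prop:Pm/Pconv}. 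So your approach works but imports extra technicalities that the weak-formulation trick sidesteps.
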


\begin{proof} The functions~${H^\delta(\cdot):=}\Zm_{\Od}(u^\delta,\cdot)$ are uniformly (in~$\delta$) bounded on compact subsets of~$\Omega\smallsetminus\{u\}$ as
\begin{equation}
\label{x:Zm-upper-bound}
0\ \le\ \Zm_{\Od}(u^\delta,v^\delta)\ \le\ \Z_{\Od}(u^\delta,v^\delta)\ 
\le\ \tfrac{1}{2\pi}(\log R - \log |u^\delta\!-\!v^\delta|)+O(1).
\end{equation}
Moreover, Lemma~\ref{lem:regularity} implies that these functions are also equicontinuous and hence one can find a subsequential limit~${ h}:\Omega\smallsetminus\{u\}\to\mathbb R_+$ such that
\[
{H^\delta(\cdot)\ \to\ h(\cdot)}\quad\text{as}\ \ \delta=\delta_k\to 0,
\]
uniformly on compact subsets of~$\Omega\smallsetminus\{u\}$. Furthermore, it follows from Lemma~\ref{lem:weak Beurling} that {the function~$h(\cdot)$} has Dirichlet boundary conditions everywhere at~$\partial\Omega$.

It remains to check that~$[(-\Delta+m^2)h](\cdot)=\delta_u(\cdot)$ in the sense of distributions. Let~$\phi\in C_0^\infty(\Omega)$ be a smooth function such that~$\mathrm{supp}\phi\subset\Omega$ and hence~$\mathrm{supp}\phi\subset\contOd$ provided that~$\delta$ is small enough. For~$v^\delta\in\Int\Od$, denote
\[
[\Delta^\delta \phi](v^\delta)\ :=\ \frac{1}{4\delta^2}\sum\nolimits_{v^\delta_1\in\Od:v^\delta_1\sim v}(\phi(v^\delta_1)-\phi(v^\delta)).
\]
{Recall that the function~$H^\delta(v^\delta)=\Zm_{\Od}(u^\delta,v^\delta)$ satisfies the massive harmonicity equation~\eqref{eq:mharm-def} everywhere in~$\Od$ except at the vertex~$u^\delta$ and that the mismatch in~\eqref{eq:mharm-def} equals to~$1$ if~$v^\delta=u^\delta$. The discrete integration by parts gives the identity}
\begin{align*}
\phi(u^\delta)\ &=\ {\delta^2\sum\nolimits_{v^\delta\in\Int \Od}\phi(v^\delta)\cdot \bigl(m^2H^\delta(v^\delta)-(1\!-\!m^2\delta^2)[\Delta^\delta H^\delta](v^\delta)\bigr)}\\
&=\ \delta^2 \sum\nolimits_{v^\delta\in\Int\Od}
{H^\delta(v^\delta)}\cdot\bigl(m^2\phi(v^\delta)-(1\!-\!m^2\delta^2)[\Delta^\delta\phi](v^\delta)\bigr).
\end{align*}
{We now pass to the limit $\delta\to 0$ in this identity; note that the prefactor $\delta^2$ in the right-hand side is nothing but the area of a unit cell on the grid $\delta\mathbb{Z}^2$.} Clearly,~$[\Delta^\delta\phi](v^\delta)=[\Delta\phi](v^\delta)+O(\delta\cdot\max_{v\in \Omega}|D^3\phi(v)|)$. The upper bound~\eqref{x:Zm-upper-bound} implies that the sums over~$\rho$-vicinities of~$u$ are uniformly (in~$\delta$) small as~$\rho\to 0$. Hence, the convergence of~{$H^\delta$ to~$h$} away from~$u$ implies that
\[
\phi(u)\ =\ \int_\Omega {h(v)}\bigl(m^2\phi(v)-[\Delta\phi](v)\bigr)dA(v).
\]

Therefore, each subsequential limit {of the functions $H^\delta$ coincides with~$G_\Omega^{(m)}(u,\cdot)$,} which proves \eqref{eq:Gm-conv} for fixed~$u$ and~$v$. The fact that the convergence is uniform follows from the equicontinuity of functions~$\Zm_{\Od}(u^\delta,v^\delta)$ discussed above and the compactness of the set of pairs~$(u,v)$ under consideration.
\end{proof}

\begin{remark} \label{rem:Gm/G>c}
It follows from the convergence~\eqref{eq:Gm-conv} that, for~$u,v\in\Omega\subset B(0,R)$, one has
\[
\exp(-c_0m^2R^2)\cdot G_\Omega(u,v)\ \le\ G^{(m)}_\Omega(u,v)\ \le\ G_\Omega(u,v) 
\]
due to the similar uniform estimate in discrete provided by Corollary~\ref{cor:Zm/Z>c}.
\end{remark}

\subsection{Convergence of martingale observables}\label{sect:m-convergence}
Recall that $(\Omega^{\delta};a^{\delta},b)$ are discrete approximations on scale $\delta$ of $(\Omega;a,b)$ in the Carath\'eodory sense. It follows from the absolute continuity of massive LERW with respect to the massless one (see~Section~\ref{sect:abs-cont}) that the family of mLERW probability measures in $(\Omega^{\delta};a^{\delta},b)$ is tight, when parameterized by the half-plane capacities of their conformal images (under the mappings~$\phi_{\contOd}$) in $(\mathbb{H};0,\infty)$. Using the Skorokhod representation theorem as in Section~\ref{sect:conv_m=0}, we can always assume that, almost surely,
\[
(\contOd_{t,r};a^\delta_{t,r})\ \mathop{\longrightarrow}\limits^{\mathrm{Cara}}\ (\Omega_{t,r};a_{t,r})\quad \text{as}\ \ \delta\to 0,
\]
where~$\Od_{t,r}=\Od\smallsetminus\gamma^\delta[0,n^\delta_{t,r}]$ and~$a^\delta_{t,r}=\gamma^\delta(n^\delta_{t,r})$. The goal of this section is to show that in this situation the martingale observables~\eqref{eq:defMm}, evaluated in the~$\frac{1}{2}r$-vicinity of~$b$, also converge almost surely to their continuous analogues. In other words, Proposition~\ref{prop:M=0conv} (for~$m=0$) and Proposition~\ref{prop:Mconv} (for~$m\ne 0$) are \emph{deterministic} statements, which we later apply for all possible limiting curves. For shortness, below we drop the second subscript $r$ and simply say that $t\le\tau_r$ instead.

We start by proving the convergence result for the classical (i.e., massless) LERW observable normalized at the boundary point~$b$.
\begin{proposition}\label{prop:M=0conv} In the setup described above, let~$t\le \tau_r$ and~$v\in B_\Omega(b,\frac{1}{2}r)$. Then
\[
M_{\Od_t}(v^\delta)\ =\ \frac{\Z_{\Od_t}(a_t^\delta,v^\delta)}{\Z_{\Od_t}(a_t^\delta,b)}\cdot \Z_{\Od}(o^\delta,b)
\ \to\ P_{\Omega_t}(a_t,v)\quad \text{as}\quad \delta\to 0,
\]
where the Poisson kernel~$P_{\Omega_t}(a_t,\cdot)$ in the domain~$\Omega_t$ is normalized so that one has~$P_{\Omega_t}(a_t,z)\sim P_{\Omega}(a,z)\sim G_\Omega(0,z)$ as~$z\to b$, see~\eqref{eq:PsimG} and Section~\ref{sect:estimates}.
\end{proposition}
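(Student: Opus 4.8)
The plan is to compare $M_{\Od_t}(v^\delta)$ with $P_{\Omega_t}(a_t,v)$ by inserting an auxiliary inner point $b_{\varepsilon r}$ close to the degenerate prime end $b$: away from $b$ the convergence is supplied by the results of Section~\ref{sect:convergence}, while the behaviour near $b$ is controlled by the \emph{uniform} (in $\delta$) ratio estimate of Corollary~\ref{cor:near-bdry=1}; one then sends $\delta\to 0$ first and $\varepsilon\to 0$ afterwards. For the setup, recall that since $t\le\tau_r$ the slit $\gamma^\delta[0,n^\delta_{t,r}]$ stays at positive $\rho$-distance from $b$, so for $\delta$ small the graphs $\Od_t$ and $\Od$ agree inside a fixed vicinity $B_{\Od}(b,\tfrac34 r)$ of $b$, and likewise $\Omega_t$ and $\Omega$ agree near $b$; in particular $a^\delta_t,o^\delta\notin B_{\Od}(b,\tfrac34 r)$ for $\delta$ small. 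Choose $b_{\varepsilon r}\in\Omega$ with $\rho_\Omega(b,b_{\varepsilon r})<\varepsilon r$ and approximations $b^\delta_{\varepsilon r}\in\Int\Od$ with $b^\delta_{\varepsilon r}\to b_{\varepsilon r}$; by~\eqref{x:rho<r} one has $b^\delta_{\varepsilon r}\in B_{\Od}(b,\tfrac34 r)$ for $\delta$ small. Writing $b^\delta_{\mathrm{int}}$ for the interior vertex of $\Od$ adjacent to $b$, the definition~\eqref{eq:defZm} gives $\Z_{\Od_t}(a^\delta_t,b)=\Z_{\Od_t}(a^\delta_t,b^\delta_{\mathrm{int}})$ and $\Z_{\Od}(o^\delta,b)=\Z_{\Od}(o^\delta,b^\delta_{\mathrm{int}})$, so that
\[
M_{\Od_t}(v^\delta)\ =\ \frac{\Z_{\Od_t}(a^\delta_t,v^\delta)}{\Z_{\Od_t}(a^\delta_t,b^\delta_{\varepsilon r})}\cdot\left(\frac{\Z_{\Od_t}(a^\delta_t,b^\delta_{\varepsilon r})}{\Z_{\Od_t}(a^\delta_t,b)}\cdot\Z_{\Od}(o^\delta,b)\right)\ =:\ (\mathrm{I})_\delta\cdot(\mathrm{II})_\delta.
\]

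For $(\mathrm{I})_\delta$ the points $v$ and $b_{\varepsilon r}$ are fixed inner points of $\Omega_t$ and $a_t$ is a prime end of $\Omega_t$, so Corollary~\ref{cor:P-conv} gives $(\mathrm{I})_\delta\to P_{\Omega_t}(a_t,v)/P_{\Omega_t}(a_t,b_{\varepsilon r})$ as $\delta\to0$, with $P_{\Omega_t}(a_t,\cdot)$ an arbitrarily normalised Poisson kernel (the ratio does not see the normalisation). For $(\mathrm{II})_\delta$, both $z\mapsto\Z_{\Od_t}(a^\delta_t,z)$ and $z\mapsto\Z_{\Od}(o^\delta,z)$ are positive discrete harmonic functions inside the common vicinity $B_{\Od}(b,\tfrac34 r)$ with zero Dirichlet data on the part of its boundary lying on $\partial\Od$ (here one uses $a^\delta_t,o^\delta\notin B_{\Od}(b,\tfrac34 r)$). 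Applying Corollary~\ref{cor:near-bdry=1} to these two functions with $q$ chosen so that $b^\delta_{\varepsilon r}$ and $b^\delta_{\mathrm{int}}$ both lie in the $2^{-q}\cdot\tfrac34 r$-vicinity of $b$ (hence $q\sim\log_2(1/\varepsilon)\to\infty$, and the hypothesis $2^q\delta<\tfrac34 r$ holds for $\delta$ small depending on $\varepsilon$), and using $\Z_{\Od_t}(a^\delta_t,b)=\Z_{\Od_t}(a^\delta_t,b^\delta_{\mathrm{int}})$, one obtains, uniformly in $\delta$,
\[
\frac{\Z_{\Od_t}(a^\delta_t,b^\delta_{\varepsilon r})}{\Z_{\Od_t}(a^\delta_t,b)}\ =\ \bigl(1+O(\varepsilon^\alpha)\bigr)\cdot\frac{\Z_{\Od}(o^\delta,b^\delta_{\varepsilon r})}{\Z_{\Od}(o^\delta,b)},\qquad \alpha:=\log_2(1/k)>0,
\]
where $k<1$ is the universal constant of Lemma~\ref{lemma:-+}. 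Hence $(\mathrm{II})_\delta=(1+O(\varepsilon^\alpha))\cdot\Z_{\Od}(o^\delta,b^\delta_{\varepsilon r})$, and $\Z_{\Od}(o^\delta,b^\delta_{\varepsilon r})\to G_\Omega(0,b_{\varepsilon r})$ by Corollary~\ref{cor:G-conv}.

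Combining the two factors, for each fixed small $\varepsilon$ both $\liminf_{\delta\to0}M_{\Od_t}(v^\delta)$ and $\limsup_{\delta\to0}M_{\Od_t}(v^\delta)$ are squeezed between $(1\mp O(\varepsilon^\alpha))\cdot P_{\Omega_t}(a_t,v)\cdot G_\Omega(0,b_{\varepsilon r})/P_{\Omega_t}(a_t,b_{\varepsilon r})$. It remains to send $\varepsilon\to0$: since $\rho_\Omega(b,b_{\varepsilon r})\to0$ and both $G_\Omega(0,\cdot)$ and $P_{\Omega_t}(a_t,\cdot)$ are positive harmonic near $b$ with zero boundary values, the ratio $G_\Omega(0,b_{\varepsilon r})/P_{\Omega_t}(a_t,b_{\varepsilon r})$ converges to a constant $c$ by the argument around~\eqref{eq:h1(b)/h2(b)def}, and $c\cdot P_{\Omega_t}(a_t,\cdot)$ is precisely the Poisson kernel with the normalisation required in the statement, namely $cP_{\Omega_t}(a_t,z)\sim G_\Omega(0,z)$ as $z\to b$ (cf.~\eqref{eq:PsimG}). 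Therefore $M_{\Od_t}(v^\delta)\to P_{\Omega_t}(a_t,v)$.

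The main obstacle is the bookkeeping around the degenerate prime end $b$: checking that $\Od_t,\Od$ (resp.\ $\Omega_t,\Omega$) genuinely agree in a fixed vicinity of $b$ when $t\le\tau_r$ (including the borderline case $t=\tau_r$, which may instead be reached by continuity in $t$ from the cases $t<\tau_r$), that $\Z_{\Od}(o^\delta,b)$ and $\Z_{\Od_t}(a^\delta_t,b)$ are literally boundary-normalised discrete harmonic values so that Corollary~\ref{cor:near-bdry=1} applies to the ratio $\Z_{\Od_t}(a^\delta_t,\cdot)/\Z_{\Od}(o^\delta,\cdot)$ of two positive discrete harmonic functions on the common vicinity, and, crucially, that the error term coming from that corollary is uniform in $\delta$, which is what legitimises interchanging the limits $\delta\to0$ and $\varepsilon\to0$. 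Once these points are settled the proof is a routine combination of Corollaries~\ref{cor:P-conv}, \ref{cor:G-conv} and~\ref{cor:near-bdry=1}.
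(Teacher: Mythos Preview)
Your proof is correct and follows essentially the same route as the paper: insert an auxiliary inner point $b_{\varepsilon r}$ near $b$, use Corollary~\ref{cor:near-bdry=1} to replace $b$ by $b_{\varepsilon r}$ at the cost of a uniform $(1+O(\varepsilon^\beta))$ factor, then apply Corollaries~\ref{cor:P-conv} and~\ref{cor:G-conv} for the convergence of the Poisson-kernel ratio and the Green function, and finally send $\varepsilon\to 0$ using the normalisation~\eqref{eq:h1(b)/h2(b)def} at $b$. The paper's version is terser but the decomposition and the ingredients are the same; your extra remarks on the bookkeeping near $b$ (agreement of $\Od_t$ and $\Od$ in $B_{\Od}(b,\tfrac34 r)$, identification of $\Z_{\Od}(\cdot,b)$ with the value at $b^\delta_{\mathrm{int}}$) are implicit in the paper's argument.
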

\begin{proof} Given a small~$\varepsilon>0$, pick a point~$b_{\varepsilon r}\in B_\Omega(b,\varepsilon r)$. Corollary~\ref{cor:near-bdry=1} implies that
\[
\frac{\Z_{\Od_t}(a_t^\delta,v^\delta)\Z_{\Od}(o^\delta,b)}{\Z_{\Od_t}(a_t^\delta,b)}\ =\ \frac{\Z_{\Od_t}(a_t^\delta,v^\delta)\Z_{\Od}(o^\delta,b^\delta_{\varepsilon r})}{\Z_{\Od_t}(a_t^\delta,b^\delta_{\varepsilon r})}\cdot (1+O(\varepsilon^\beta)),
\]
with a universal exponent~$\beta>0$ and a universal (in particular, uniform in~$\delta$) \mbox{$O$-bound}. For each~$\varepsilon>0$, it follows from Corollary~\ref{cor:P-conv} and Corollary~\ref{cor:G-conv} that
\[
\frac{\Z_{\Od_t}(a_t^\delta,v^\delta)}{\Z_{\Od_t}(a_t^\delta,b^\delta_{\varepsilon r})}\ \mathop{\to}\limits_{\delta\to 0}\ \frac{P_{\Omega_t}(a_t,v)}{P_{\Omega_t}(a_t,b_{\varepsilon r})} \quad\text{and}\quad \Z_{\Od}(o^\delta,b^\delta_{\varepsilon r})\ \mathop{\to}\limits_{\delta\to 0}\ G_{\Omega}(0,b_{\varepsilon r}).
\]
Since we also know that
\[
\frac{P_{\Omega_t}(a_t,v)G_{\Omega}(0,b_{\varepsilon r})}{P_{\Omega_t}(a_t,b_{\varepsilon r})}\ \mathop{\to}\limits_{\varepsilon\to 0} \ P_{\Omega_t}(a_t,v)\frac{G_{\Omega}(0,b)}{P_{\Omega_t}(a_t,b)}\ =\ P_{\Omega_t}(a_t,v),
\]
the claim follows by first sending~$\delta\to 0$ and then~$\varepsilon\to 0$.
\end{proof}

We now move on to the convergence of the martingale observable in the massive setup. In order to formulate an analogue of Proposition~\ref{prop:M=0conv} in this situation, we need to introduce the massive Poisson kernel
\begin{equation}
\label{eq:defPm}
P^{(m)}_{\Omega_t}(a_t,z)\ :=\ P_{\Omega_t}(a_t,z)-m^2\int_{\Omega_t} P_{\Omega_t}(a_t,w)G^{(m)}_{\Omega_t}(w,z)dA(w).
\end{equation}
We refer the reader to Section~\ref{sect:estimates} (more precisely, to Remark~\ref{rem:PmQm-welldef}(i)), where the convergence of this integral is discussed; note that no regularity assumptions on~$\Omega_t$ are required for this fact.

\begin{proposition} \label{prop:Pm/Pconv} In the setup described above, let~$z\in \Omega_t$ (note that we do not need to assume that this point is close to~$b$). Then, as~$\delta\to 0$,  one has
\[
\frac{\Zm_{\Od_t}(a^\delta_t,z^\delta)}{\Z_{\Od_t}(a^\delta_t,z^\delta)}\ \to\ \frac{P_{\Omega_t}^{(m)}(a_t,z)}{P_{\Omega_t}(a_t,z)}\ =\ 1-m^2\int_{\Omega_t}\frac{P_{\Omega_t}(a_t,w)}{P_{\Omega_t}(a_t,z)}G_{\Omega_t}^{(m)}(w,z)dA(w).
\]
\end{proposition}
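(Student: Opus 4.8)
The plan is to derive everything from the discrete identity of Lemma~\ref{lem:Zm(w,z)=Z-ZmZ}, applied in the domain $\Od_t$ with $w^\delta=a^\delta_t$. Dividing that identity by $\Z_{\Od_t}(a^\delta_t,z^\delta)>0$ yields
\[
(1-m^2\delta^2)\,\frac{\Zm_{\Od_t}(a^\delta_t,z^\delta)}{\Z_{\Od_t}(a^\delta_t,z^\delta)}\ =\ 1\ -\ m^2\delta^2\!\!\sum_{v^\delta\in\Int\Od_t}\!\frac{\Z_{\Od_t}(a^\delta_t,v^\delta)}{\Z_{\Od_t}(a^\delta_t,z^\delta)}\cdot\Zm_{\Od_t}(v^\delta,z^\delta).
\]
Since $m^2\delta^2\to 0$, and since the claimed limit is precisely the definition~\eqref{eq:defPm} of $P^{(m)}_{\Omega_t}(a_t,\cdot)$ divided by $P_{\Omega_t}(a_t,z)$ (so the second displayed equality in the statement is tautological), it will suffice to show that the Riemann-type sum on the right converges, as $\delta\to 0$, to $m^2\int_{\Omega_t}P_{\Omega_t}(a_t,w)\,P_{\Omega_t}(a_t,z)^{-1}\,G^{(m)}_{\Omega_t}(w,z)\,dA(w)$, the latter integral being absolutely convergent by Remark~\ref{rem:PmQm-welldef}(i).

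To treat this sum I would use a dominated Riemann-sum argument, splitting $\Int\Od_t$ into a ``bulk'' and a ``boundary layer''. For small $\rho>0$ set $K_\rho:=\{w\in\Omega_t:\dist(w,\partial\Omega_t)\ge\rho,\ |w-z|\ge\rho\}$; for $\rho$ small enough $K_\rho$ lies, together with the fixed point $z$, in one connected component of the $\rho$-interior of $\Omega_t$, and $|w-z|\ge\rho$ on $K_\rho$. On (a discrete approximation of) $K_\rho$ both factors of the summand converge \emph{uniformly}: $\Z_{\Od_t}(a^\delta_t,v^\delta)/\Z_{\Od_t}(a^\delta_t,z^\delta)\to P_{\Omega_t}(a_t,w)/P_{\Omega_t}(a_t,z)$ by Corollary~\ref{cor:P-conv}, and $\Zm_{\Od_t}(v^\delta,z^\delta)\to G^{(m)}_{\Omega_t}(w,z)$ by Proposition~\ref{prop:Gm-conv} --- both applicable thanks to the Carath\'eodory convergence $(\Od_t;a^\delta_t)\to(\Omega_t;a_t)$ from the Skorokhod coupling. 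Hence $\delta^2$ times the partial sum over $K_\rho$ converges to $\int_{K_\rho}$ of the limiting integrand, and this tends to $\int_{\Omega_t}$ of it as $\rho\to 0$ by integrability.

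The delicate step --- which I expect to be the main obstacle --- is the uniform (in $\delta$) smallness, as $\rho\to 0$, of the contribution of the boundary layer $\Int\Od_t\setminus K_\rho$, which in particular absorbs the vicinity of the \emph{degenerate} prime end $a_t$ (where the weight $P_{\Omega_t}(a_t,\cdot)$ may blow up arbitrarily fast, depending on the unknown geometry near $a_t$) and the logarithmic singularity of $G^{(m)}_{\Omega_t}(\cdot,z)$ at $z$. Here I would combine three inputs: (i) $\Zm_{\Od_t}\le\Z_{\Od_t}$ together with Lemma~\ref{lem:ZZ/Z<cG}, which bounds the summand by $\const\,(1+\Z_{\Od_t}(a^\delta_t,v^\delta)+\Z_{\Od_t}(v^\delta,z^\delta))$; (ii) the elementary uniform bound $\Z_{\Od_t}(a^\delta_t,v^\delta)\le\const$, valid because $a^\delta_t$ is a boundary vertex, so $\Z_{\Od_t}(a^\delta_t,\cdot)$ equals (up to a constant) the simple-random-walk Green function of $\Int\Od_t$ based at the interior neighbour $a^\delta_{t,\mathrm{int}}$ of $a^\delta_t$, which is $\le\const$ by the maximum-on-the-diagonal property and the fact that the walk from $a^\delta_{t,\mathrm{int}}$ exits on its first step with probability $\ge\tfrac14$; and (iii) the a priori estimate $\Z_{\Od_t}(v^\delta,z^\delta)\le\tfrac{1}{2\pi}\log(R/|v^\delta-z^\delta|)+O(1)$ from~\eqref{x:Zm-upper-bound}. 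Together these give a uniform domination of the summand by $\const\,(1+\log(R/|v^\delta-z^\delta|))$, whence $\delta^2\sum_{v^\delta\in\Int\Od_t\setminus K_\rho}$ of it is controlled by $\const\,(\mathrm{Area}(\Int\Od_t\setminus K_\rho)+\int_{|w-z|<\rho}\log(R/|w-z|)\,dA(w))$; the second term vanishes as $\rho\to 0$, and for the first I would invoke the standard (though slightly technical) fact that the Carath\'eodory convergence $\Od_t\to\Omega_t$ forces, in the limit $\delta\to 0$, the area of a $\rho$-neighbourhood of $\partial\Od_t$ inside $\Od_t$ to be at most that of the $\rho$-neighbourhood of $\partial\Omega_t$ in $\Omega_t$, which shrinks to $0$ with $\rho$. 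Assembling the bulk and boundary-layer estimates (first $\delta\to 0$, then $\rho\to 0$) yields the convergence of the sum, and hence the Proposition.
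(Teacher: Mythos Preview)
Your overall strategy --- rewriting the ratio via Lemma~\ref{lem:Zm(w,z)=Z-ZmZ} and treating the resulting sum as a Riemann sum, handling the bulk through the uniform convergence of Corollary~\ref{cor:P-conv} and Proposition~\ref{prop:Gm-conv} --- is the same as the paper's. Your domination of the summand by $\mathrm{const}\cdot(1+\log(R/|v^\delta-z^\delta|))$ via Lemma~\ref{lem:ZZ/Z<cG} and the observation $\Z_{\Od_t}(a^\delta_t,v^\delta)\le\mathrm{const}$ (since $a^\delta_t$ is a boundary vertex) is correct, and in fact a little cleaner than the paper's route through the estimate~\eqref{eq:upper-bound-O3r}; with your bound one would not even need the separate parameter~$\rho_a$.

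However, your boundary-layer argument has a genuine gap. You want $\mathrm{Area}(\Int\Od_t\setminus K_\rho)\to 0$ as $\rho\to 0$ uniformly in small~$\delta$, and you justify this by comparing with the $\rho$-neighbourhood of~$\partial\Omega_t$. This does not follow from Carath\'eodory convergence: the discrete domains $\Od_t$ may contain \emph{fjords} --- regions of fixed positive area attached to the bulk only through channels narrower than~$2\rho$ --- which disappear in the Carath\'eodory limit~$\Omega_t$ (and hence lie outside your $K_\rho\subset\Omega_t$) yet sit well inside~$\Od_t$, far from~$\partial\Od_t$. The paper explicitly flags this: ``these parts of~$\Omega^\delta_t$ can be in principle rather big as we require only the Carath\'eodory convergence of~$\Omega^\delta$ to~$\Omega$ (and so~$\Omega^\delta$ might contain big fjords that disappear in the limit).'' On such a fjord your summand bound is merely $O(1)$, so its contribution does not vanish as~$\rho\to 0$, and your area claim is simply false.

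The paper's remedy is to make the \emph{summand}, not the area, small on the fjord region. A point $w^\delta$ that is not jointly $\rho$-inside with~$z$ is separated from~$z$ by a ball of radius~$\rho$ touching~$\partial\Od_t$; the weak-Beurling estimate (Lemma~\ref{lem:weak Beurling}) then gives $\Zm_{\Od_t}(w^\delta,z^\delta)=O(\rho^\beta)$. Combined with a uniform bound on the Poisson-kernel ratio (either via~\eqref{eq:upper-bound-O3r} as in the paper, or via your $\Z_{\Od_t}(a^\delta_t,\cdot)\le\mathrm{const}$), the summand itself is $O(\rho^\beta)$, and the fjord contribution is at most $\mathrm{Area}(\Od_t)\cdot O(\rho^\beta)\le \pi R^2\cdot O(\rho^\beta)\to 0$, regardless of how large the fjord is. This Beurling step is the missing ingredient in your argument.
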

\begin{proof} {It follows from Lemma~\ref{lem:Zm(w,z)=Z-ZmZ} that}
\begin{equation}
\label{x:sum}
1-\frac{(1\!-\!m^2\delta^2)\Zm_{\Od_t}(a^\delta_t,z^\delta)}{\Z_{\Od_t}(a^\delta_t,z^\delta)}\ =\ m^2\delta^2\sum_{w^\delta\in\Int\Od_t} \frac{\Z_{\Od_t}(a^\delta_t,w^\delta)}{\Z_{\Od_t}(a^\delta_t,z^\delta)}\Zm_{\Od_t}(w^\delta,z^\delta).
\end{equation}
We now want to pass to the limit as~$\delta\to 0$ in this expression. For this purpose, we fix small parameters~$\rho,\rho_a>0$ and split the sum into the following four parts~$\mathrm{I}^\delta$--$\mathrm{IV}^\delta$; see Fig.~\ref{fig:sum} for an illustration:

\begin{figure}
\includegraphics[width=0.8\textwidth]{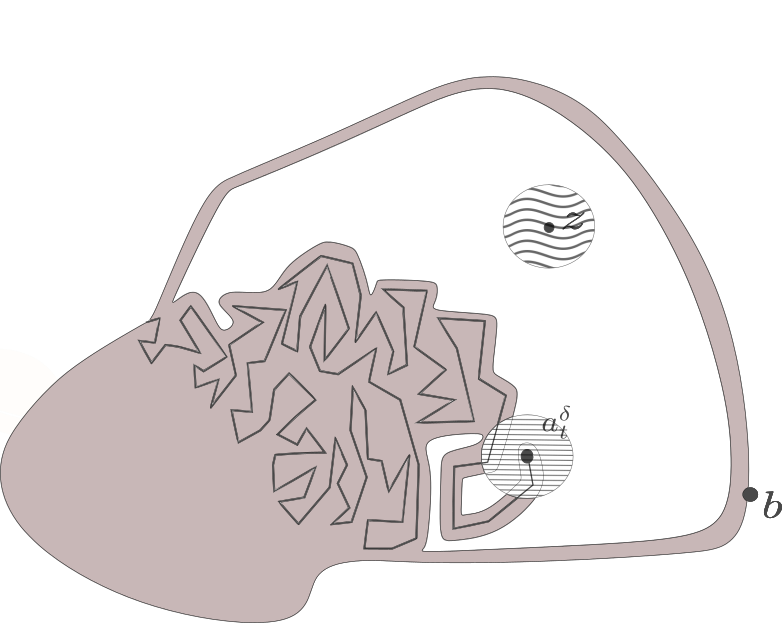}
	\caption{Four parts in the summation~\eqref{x:sum} over $w^{\delta}\in\Int\Omega_t^{\delta}$: the~white region inside the domain is~$\mathrm{I}^{\delta}$; the shaded vicinities of~$z$ and~$a^\delta_t$ are~$\mathrm{II}^{\delta}$ and~$\mathrm{III}^{\delta}$, respectively; the gray region is $\mathrm{IV}^{\delta}$.\label{fig:sum}}
\end{figure}

\smallskip

$\mathrm{I}^\delta$: \emph{sum over~$w^\delta$ lying jointly $\rho$-inside~$\Omega^\delta_t$ with~$z$ but not in~$B(z,\rho)\cup B(a_t^{\delta},\rho_a)$.} First, note that for~$w^\delta\not\in B(z,\rho)$ the summands are uniformly bounded from above due to Lemma~\ref{lem:ZZ/Z<cG} {since $Z^{(m)}_{\Omega^\delta_t}(w^\delta,z^\delta)\le Z_{\Omega^\delta_t}(w^\delta,z^\delta)$ and the right-hand side of~\eqref{eq:ZZ/Z<cG} is $O(1)$ provided that $v^\delta=a^\delta_t$ and $|w^\delta-z^\delta|\ge\rho$.} Thus, on these parts of~$\Od$ one can use Corollary~\ref{cor:P-conv} and~Proposition~\ref{prop:Gm-conv} to deduce the convergence
\begin{equation}
\label{x:sum1}
\mathrm{I}^\delta\ \to\ m^2\int_{\Omega_t^{(\rho)}\smallsetminus (B(z,\rho)\cup B(a_t,\rho_a))}\frac{P_{\Omega_t}(a_t,w)}{P_{\Omega_t}(a_t,z)}G^{(m)}_{\Omega_t}(w,z)dA(w)\quad\text{as}\ \ \delta\to 0,
\end{equation}
where~$\Omega_t^{(\rho)}$ denotes the connected component of the~$\rho$-interior of~$\Omega_t$ that contains~$z$.

\smallskip

$\mathrm{II}^\delta$: \emph{sum over {$w^\delta$ in} the $\rho$-vicinity of~$z$.} Due to the Harnack principle for discrete harmonic functions, the ratios~$\Z_{\Od_t}(a^\delta_t,w^\delta)/\Z_{\Od_t}(a^\delta_t,z^\delta)$ are uniformly bounded {if $w^\delta$ is close to}~$z^\delta$. Therefore, the summands of this part of~\eqref{x:sum} are majorated by the Green function~$\Z_{\Od_t}(w^\delta,z^\delta)$ since $\Z^{(m)}_{\Od_t}(w^\delta,z^\delta)\le\Z_{\Od_t}(w^\delta,z^\delta)$. Standard estimates
give
\begin{equation}
\label{x:sum2}
\mathrm{II}^\delta\ =\ O(\rho^2\,{\log \rho})\quad \text{uniformly~in}~\delta.
\end{equation}

$\mathrm{III}^\delta$: \emph{sum over the~$\rho_a$-vicinity of~$a_t$.} As already mentioned above,  Lemma~\ref{lem:ZZ/Z<cG} implies that on these parts of~$\Od$ the summands are uniformly bounded. Therefore,
\begin{equation}
\label{x:sum3}
\mathrm{III}^\delta\ =\ O(\rho_a^2)\quad \text{uniformly~in}~\delta.
\end{equation}

\smallskip

$\mathrm{IV}^\delta$: \emph{sum over~$w^\delta$ that are neither jointly $\rho$-inside~$\Omega^\delta_t$ with~$z$, nor in the $\rho_a$-vicinity~$a_t$, {nor in the} $\rho$-vicinity of $z$.} It is worth noting that these parts of~$\Omega^\delta_t$ can be in principle rather big as we require only the Carath\'eodory convergence of~$\Omega^\delta$ to~$\Omega$ (and so~$\Omega^\delta$ might contain big fjords that disappear in the limit). Nevertheless, one can easily see that the summands in this part of~\eqref{x:sum} are uniformly (in~$\delta$) small as~$\rho\to 0$. Indeed, due to~\eqref{eq:upper-bound-O3r} we have a uniform (provided that~$\delta$ is small enough) upper bound
\[
\frac{\Z_{\Od_t}(a^\delta_t,w^\delta)}{\Z_{\Od_t}(a^\delta_t,z^\delta)}\ \le\ C(\rho_a;\Omega_t,a_t)\quad \text{for}\ \ w^\delta\notin B_{\Od_t}(a_t^\delta,\rho_a).
\]
At the same time, since~$w^\delta$ is not $\rho$-jointly inside~$\Od_t$ with~$z$, there exists a ball of radius~$\rho$ which intersects the boundary of~$\Od_t$ and separates these two points in~$\Od_t$. Therefore, the weak-Beurling estimate (see Lemma~\ref{lem:weak Beurling}) gives that
\[
Z^{(m)}_{\Od_t}(w^\delta,z^\delta)=O(\rho^\beta),
\]
which allows us to conclude that
\begin{equation}
\label{x:sum4}
\mathrm{IV}^\delta\ \le\ \mathrm{Area(\Od_t)}\cdot C(\rho_a;\Omega_t,a_t)\cdot O(\rho^\beta)\quad\text{uniformly~in}~\delta.
\end{equation}
Combining~\eqref{x:sum1}--\eqref{x:sum4} together and sending first~$\rho\to 0$ and then~$\rho_a\to 0$ we get
\[
\mathrm{I}^\delta+\mathrm{II}^\delta+\mathrm{III}^\delta+\mathrm{IV}^\delta\ \to\ m^2\int_{\Omega_t}\frac{P_{\Omega_t}(a_t,w)}{P_{\Omega_t}(a_t,z)}G^{(m)}_{\Omega_t}(w,z)dA(w)\quad\text{as}\ \ \delta\to 0
\]
since the domains~$\Omega_t^{(\rho)}\smallsetminus (B(z,\rho)\cup B(a_t,\rho_a))$ exhaust~$\Omega_t$. The proof is completed.
\end{proof}

We now introduce the quantity
\begin{equation}
\label{eq:defNm}
N^{(m)}_{\Omega_t} = N^{(m)}_{\Omega_t}(a_t,b)\ :=\ \biggl[\,\frac{P^{(m)}_{\Omega_t}(a_t,b)}{P_{\Omega_t}(a_t,b)}\,\biggr]^{-1} =\ \biggl[\,\lim_{z\to b}\frac{P^{(m)}_{\Omega_t}(a_t,z)}{P_{\Omega_t}(a_t,z)}\,\biggr]^{-1},
\end{equation}
which keeps track of the normalization of the massive observable at the point~$b$. The existence of this limit {(as $z\to b$)} is discussed in Section~\ref{sect:estimates}; see~\eqref{eq:Pm/P(b)def}. The next proposition is the main result of this section. It is worth noting that { the convergence~\eqref{eq:Zm/Z(b)toPm/P(b)} discussed below and Corollary~\ref{cor:Zm/Z>c}, in particular, imply the uniform estimates~$1\le N^{(m)}_{\Omega_t}(a_t,b)\le\exp(c_0m^2R^2)$.}

\begin{proposition} \label{prop:Mconv}
In the setup of Proposition~\ref{prop:M=0conv} (i.e.,~$t\le\tau_r$ and~$v\in B_\Omega(b,\frac{1}{2}r)$), the following convergence holds true as~$\delta\to 0$:
\[
M^{(m)}_{\Od_t}(v^\delta)\ =\ \frac{\Zm_{\Od_t}(a_t^\delta,v^\delta)}{\Zm_{\Od_t}(a_t^\delta,b)}\cdot \Z_{\Od}(o^\delta,b)
\ \to\ P^{(m)}_{\Omega_t}(a_t,v)\cdot N^{(m)}_{\Omega_t}(a_t,b) =: M^{(m)}_{\Omega_t}(v),
\]
where the quantities in the right-hand side are defined by~\eqref{eq:defPm} and~\eqref{eq:defNm}.
\end{proposition}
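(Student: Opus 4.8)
The plan is to combine the convergence result for the massless observable (Proposition~\ref{prop:M=0conv}), the convergence of the ratio $\Zm/\Z$ (Proposition~\ref{prop:Pm/Pconv}), and the boundary-normalization machinery (Corollary~\ref{cor:near-bdry=1}) in order to reduce the statement to the analysis of these two already-established limits. First I would write the massive observable as a product
\[
M^{(m)}_{\Od_t}(v^\delta)\ =\ \frac{\Zm_{\Od_t}(a_t^\delta,v^\delta)}{\Z_{\Od_t}(a_t^\delta,v^\delta)}\cdot\frac{\Z_{\Od_t}(a_t^\delta,v^\delta)}{\Z_{\Od_t}(a_t^\delta,b^\delta_{\varepsilon r})}\cdot\Z_{\Od}(o^\delta,b^\delta_{\varepsilon r})\cdot\frac{\Z_{\Od_t}(a_t^\delta,b^\delta_{\varepsilon r})}{\Zm_{\Od_t}(a_t^\delta,b^\delta_{\varepsilon r})},
\]
where $b^\delta_{\varepsilon r}$ is an inner point of $\Omega$ lying in $B_\Omega(b,\varepsilon r)$, as in the proof of Proposition~\ref{prop:M=0conv}. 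The point of the first regrouping is that every ratio now has a limit that we can identify: the first factor converges to $P^{(m)}_{\Omega_t}(a_t,v)/P_{\Omega_t}(a_t,v)$ by Proposition~\ref{prop:Pm/Pconv}; the second times the third converges, after first sending $\delta\to0$ and then $\varepsilon\to0$, to $P_{\Omega_t}(a_t,v)$ by the argument of Proposition~\ref{prop:M=0conv}; and the fourth factor converges to $[P^{(m)}_{\Omega_t}(a_t,b_{\varepsilon r})/P_{\Omega_t}(a_t,b_{\varepsilon r})]^{-1}$ by Proposition~\ref{prop:Pm/Pconv} again, applied at the fixed inner point $z=b_{\varepsilon r}$.

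The subtlety is that the denominators in the definition of $M^{(m)}$ are normalized at the \emph{prime end} $b$, not at the inner point $b_{\varepsilon r}$, so before the above regrouping is legitimate I need to replace $b$ by $b_{\varepsilon r}$ at a controlled cost. This is exactly what Corollary~\ref{cor:near-bdry=1} provides, applied to the pair of positive discrete harmonic functions $H_1(\cdot)=\Zm_{\Od_t}(a_t^\delta,\cdot)$ and $H_2(\cdot)=\Z_{\Od_t}(a_t^\delta,\cdot)$ (both satisfy Dirichlet conditions near $b$ away from $a_t^\delta$, and $H_1$ is massive-harmonic which is also covered by the boundary estimates since Lemma~\ref{lem:weak Beurling} and Lemma~\ref{lemma:-+} are stated for discrete massive harmonic functions too --- here $H_2$ is the massless case $m=0$; one applies the ratio argument to $H_1/H_2$ which is the massive--massless ratio, using that it satisfies the hypotheses of the relevant maximum-type principles as in the proof of Proposition~\ref{prop:M=0conv}). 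Concretely, Corollary~\ref{cor:near-bdry=1} gives
\[
\frac{\Zm_{\Od_t}(a_t^\delta,b^\delta)}{\Z_{\Od_t}(a_t^\delta,b^\delta)}\ =\ \frac{\Zm_{\Od_t}(a_t^\delta,b^\delta_{\varepsilon r})}{\Z_{\Od_t}(a_t^\delta,b^\delta_{\varepsilon r})}\cdot(1+O(\varepsilon^\beta))
\]
with a uniform (in $\delta$) $O$-bound once $b^\delta_{\varepsilon r}\in B_{\Od_t}(b,\varepsilon r)$ and $\varepsilon$ is small, so that the normalization at $b$ and at $b_{\varepsilon r}$ differ by a factor $1+O(\varepsilon^\beta)$. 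Combined with the analogous replacement $\Z_{\Od_t}(a_t^\delta,b^\delta)\rightsquigarrow\Z_{\Od_t}(a_t^\delta,b^\delta_{\varepsilon r})$ already used in Proposition~\ref{prop:M=0conv}, this justifies passing from $b$ to $b_{\varepsilon r}$ in both the massive and massless denominators at cost $1+O(\varepsilon^\beta)$.

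Putting the pieces together: for each fixed $\varepsilon>0$, sending $\delta\to0$ yields
\[
M^{(m)}_{\Od_t}(v^\delta)\ \to\ \frac{P^{(m)}_{\Omega_t}(a_t,v)}{P_{\Omega_t}(a_t,v)}\cdot\frac{P_{\Omega_t}(a_t,v)\,G_\Omega(0,b_{\varepsilon r})}{P_{\Omega_t}(a_t,b_{\varepsilon r})}\cdot\frac{P_{\Omega_t}(a_t,b_{\varepsilon r})}{P^{(m)}_{\Omega_t}(a_t,b_{\varepsilon r})}\cdot(1+O(\varepsilon^\beta)).
\]
Then sending $\varepsilon\to0$, the factor $G_\Omega(0,b_{\varepsilon r})/P_{\Omega_t}(a_t,b_{\varepsilon r})\to G_\Omega(0,b)/P_{\Omega_t}(a_t,b)=1$ by the normalization of $P_{\Omega_t}(a_t,\cdot)$ at $b$ (as in Proposition~\ref{prop:M=0conv}), and $P_{\Omega_t}(a_t,b_{\varepsilon r})/P^{(m)}_{\Omega_t}(a_t,b_{\varepsilon r})\to[P^{(m)}_{\Omega_t}(a_t,b)/P_{\Omega_t}(a_t,b)]^{-1}=N^{(m)}_{\Omega_t}(a_t,b)$ by the existence of the boundary limit in~\eqref{eq:defNm} (established in Section~\ref{sect:estimates}; the continuity of the ratio $P^{(m)}/P$ at the prime end $b$ is exactly the kind of statement guaranteed by the reflection argument around~\eqref{eq:h1(b)/h2(b)def} and its discrete counterpart). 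The product telescopes to $P^{(m)}_{\Omega_t}(a_t,v)\cdot N^{(m)}_{\Omega_t}(a_t,b)$, which is the claim.

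\textbf{Main obstacle.} The routine parts are the applications of Propositions~\ref{prop:Pm/Pconv} and~\ref{prop:M=0conv}. The one point requiring genuine care is the uniformity --- in $\delta$ --- of the error terms when one swaps the prime-end normalization at $b$ for the inner-point normalization at $b_{\varepsilon r}$: one must check that Corollary~\ref{cor:near-bdry=1} genuinely applies to the pair $(\Zm_{\Od_t}(a_t^\delta,\cdot),\Z_{\Od_t}(a_t^\delta,\cdot))$, i.e. that the ratio of a massive-harmonic and a massless-harmonic function still obeys the iterated two-sided bound of Lemma~\ref{lemma:-+} (whose proof via discrete cross-ratios and extremal length was written for ordinary discrete harmonic functions; the massive case only enters through $\Zm$ being dominated by $\Z$ and through the weak-Beurling estimate of Lemma~\ref{lem:weak Beurling}, which is explicitly stated for massive harmonic functions). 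Once this uniformity is in hand, the double limit $\delta\to0$ then $\varepsilon\to0$ goes through exactly as in Proposition~\ref{prop:M=0conv}.
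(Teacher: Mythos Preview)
Your overall structure is the same as the paper's: factor $M^{(m)}$ as (massive/massless at $v$) $\times$ (massless observable) $\times$ (massless/massive at $b$), and send $\delta\to0$ in each piece. The paper does exactly this in its last display. The only nontrivial step is the convergence
\[
\frac{\Zm_{\Od_t}(a^\delta_t,b)}{\Z_{\Od_t}(a^\delta_t,b)}\ \to\ \frac{P_{\Omega_t}^{(m)}(a_t,b)}{P_{\Omega_t}(a_t,b)},
\]
and here your argument has a gap that you have already put your finger on, but not resolved.

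You want to apply Corollary~\ref{cor:near-bdry=1} to the pair $H_1=\Zm_{\Od_t}(a_t^\delta,\cdot)$ and $H_2=\Z_{\Od_t}(a_t^\delta,\cdot)$. But Lemma~\ref{lemma:-+} and Corollary~\ref{cor:near-bdry=1} are stated and proved only for \emph{discrete harmonic} (i.e.\ massless) functions; the proof uses the representation $H(u)=\sum_x \Z_{\Theta^\delta(r)}(u,x)H(x)$, and the subsequent cross-ratio estimate is for the massless kernels $\Z_{\Lambda^\delta(r)}$. If $H_1$ is massive-harmonic and $H_2$ is massless-harmonic, the two functions are represented via different kernels ($\Zm$ versus $\Z$), and the identity behind Lemma~\ref{lemma:-+} no longer produces a single cross-ratio to bound. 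Your remark that ``the massive case only enters through $\Zm\le\Z$ and the weak-Beurling estimate'' does not address this; the heart of Lemma~\ref{lemma:-+} is the algebraic cancellation, not a one-sided bound.

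The paper avoids this problem entirely. Instead of swapping $b\rightsquigarrow b_{\varepsilon r}$ directly in $\Zm/\Z$, it first uses the combinatorial identity of Lemma~\ref{lem:Zm(w,z)=Z-ZmZ} to write
\[
1-\frac{(1-m^2\delta^2)\Zm_{\Od_t}(a^\delta_t,b)}{\Z_{\Od_t}(a^\delta_t,b)}\ =\ m^2\delta^2\sum_{w^\delta}\frac{\Zm_{\Od_t}(a^\delta_t,w^\delta)\,\Z_{\Od_t}(w^\delta,b)}{\Z_{\Od_t}(a^\delta_t,b)}\,.
\]
Now the only $b$-dependence sits in the ratio $\Z_{\Od_t}(w^\delta,b)/\Z_{\Od_t}(a^\delta_t,b)$ of two \emph{massless} harmonic functions, so Corollary~\ref{cor:near-bdry=1} applies as stated and gives the uniform $(1+O(\varepsilon^\beta))$ replacement $b\rightsquigarrow b_{\varepsilon r}$. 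One then recognizes the resulting sum as the same expression with $b$ replaced by $b_{\varepsilon r}$, to which Proposition~\ref{prop:Pm/Pconv} applies at the interior point $z=b_{\varepsilon r}$. This is the missing idea in your proof: use Lemma~\ref{lem:Zm(w,z)=Z-ZmZ} to push the boundary normalization onto massless quantities before invoking Corollary~\ref{cor:near-bdry=1}.
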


\begin{proof} We start by generalizing the result of Proposition~\ref{prop:Pm/Pconv} to $z=b$:
\begin{equation}
\label{eq:Zm/Z(b)toPm/P(b)}
\frac{\Zm_{\Od_t}(a^\delta_t,b)}{\Z_{\Od_t}(a^\delta_t,b)}\ \to\ \frac{P_{\Omega_t}^{(m)}(a_t,b)}{P_{\Omega_t}(a_t,b)}\ =\
(N^{(m)}_{\Omega_t}(a_t,b))^{-1}\quad \text{as}\ \ \delta\to 0.
\end{equation}
We use the same argument as in the proof of Proposition~\ref{prop:M=0conv}. Given~$\varepsilon>0$ we pick a point~$b_{\varepsilon r}\in B_\Omega(b,\varepsilon r)$ and note that due to {Lemma \ref{lem:Zm(w,z)=Z-ZmZ}} and Corollary~\ref{cor:near-bdry=1} one has
\begin{align*}
1-\frac{(1\!-\!m^2\delta^2)\Zm_{\Od_t}(a^\delta_t,b)}{\Z_{\Od_t}(a^\delta_t,b)}\ &=\ m^2\delta^2\sum_{w^\delta\in\Int\Od_t} \frac{\Z^{(m)}_{\Od_t}(a^\delta_t,w^\delta)\Z_{\Od_t}(w^\delta,b)}{\Z_{\Od_t}(a^\delta_t,b)} \\
&=\ m^2\delta^2\sum_{w^\delta\in\Int\Od_t} \frac{\Z^{(m)}_{\Od_t}(a^\delta_t,w^\delta)\Z_{\Od_t}(w^\delta,b^\delta_{\varepsilon r})}{\Z_{\Od_t}(a^\delta_t,b^\delta_{\varepsilon r})}\cdot (1+O(\varepsilon^\beta))\\
&=\biggl[ 1-\frac{(1\!-\!m^2\delta^2)\Zm_{\Od_t}(a^\delta_t,b^\delta_{\varepsilon r})}{\Z_{\Od_t}(a^\delta_t,b^\delta_{\varepsilon r})}\biggr]\cdot (1+O(\varepsilon^\beta)),
\end{align*}
with a universal (and, in particular, uniform in~$\delta$) error term~$O(\varepsilon^\beta)$. Since~$\varepsilon>0$ can be chosen arbitrary small, Proposition~\ref{prop:Pm/Pconv} applied to~$z=b_{\varepsilon}$ implies~\eqref{eq:Zm/Z(b)toPm/P(b)}.

It remains to note that
\begin{align*}
M^{(m)}_{\Od_t}(v^\delta)\ & =\ \frac{\Zm_{\Od_t}(a_t^\delta,v^\delta)}{\Z_{\Od_t}(a_t^\delta,v^\delta)}\cdot \biggl[\frac{\Zm_{\Od_t}(a^\delta_t,b)}{\Z_{\Od_t}(a^\delta_t,b)}\biggr]^{-1}\!\cdot \frac{\Z_{\Od_t}(a_t^\delta,v^\delta)\Z_{\Od}(o^\delta,b)}{\Z_{\Od_t}(a^\delta_t,b)} \\
&\to\ \frac{P_{\Omega_t}^{(m)}(a_t,v)}{P_{\Omega_t}(a_t,v)}\cdot N^{(m)}_{\Omega_t}(a_t,b)\cdot P_{\Omega_t}(a_t,v)\quad \text{as}\ \ \delta\to 0
\end{align*}
due to~Proposition~\ref{prop:Pm/Pconv}, convergence \eqref{eq:Zm/Z(b)toPm/P(b)}, and Proposition~\ref{prop:M=0conv}, respectively.
\end{proof}

\section{Estimates and computations in continuum} \label{sect:continuum}

\setcounter{equation}{0}

For shortness, from now onwards we drop a boundary point $a$ from the notation of Poisson kernels since there is only one point~$a_t$ (tip of the slit) that we are interested in when speaking about domains $\Omega_t=\Omega\smallsetminus\gamma[0,t]$.

\subsection{A priori estimates and massive Poisson kernels} \label{sect:estimates} Given a simply connected domain~$\Lambda\subset B(0,R)$ and its uniformization~$\phi_\Lambda:\Lambda\to\mathbb H$, we set
\begin{equation}
\label{eq:PQdef}
P_\Lambda(z):=-\frac{1}{\pi}\Im\frac{1}{\phi_\Lambda(z)}\,,\qquad Q_\Lambda(z):=-\frac{1}{\pi}\Im\frac{1}{(\phi_\Lambda(z))^2}\,.
\end{equation}
It is worth emphasizing that this definition heavily relies upon the choice of~$\phi_\Lambda$ (namely, on the choice of~$a=\phi^{-1}_\Lambda(0)$ and the normalization of~$\phi_\Lambda$ at~$b=\phi_\Lambda^{-1}(\infty)$), which is not mentioned explicitly in the notation. In particular, one has
\begin{equation}
\label{eq:Q(b)/P(b)=0}
\frac{Q_\Lambda(b)}{P_\Lambda(b)}\ =\ \lim_{z\to b}\frac{Q_\Lambda(z)}{P_\Lambda(z)}\ =\ 0
\end{equation}
(see Section~\ref{sect:boundarybehavior} for the discussion of the existence of the limit). Recall that by~$G_\Lambda(w,z)$ we denote the \emph{positive} Green function in~$\Lambda$ and that~$G_\Lambda^{(m)}(w,z)$ stands for the massive Green function discussed in Section~\ref{sect:mGreen}, i.e. the integral kernel of the operator~$(-\Delta_\Lambda+m^2)^{-1}$, where~$\Delta_\Lambda$ denotes the Laplacian in~$\Lambda$ with Dirichlet boundary conditions. As mentioned in Remark~\ref{rem:Gm/G>c}, for all~$w,z\in \Lambda$ one has
\begin{equation}
\label{eq:Gbound}
\exp(-c_0m^2R^2)\cdot G_\Lambda(w,z)\ \le\ G^{(m)}_\Lambda(w,z)\ \le\ G_\Lambda(w,z)\ \le\ \frac{1}{2\pi}\log\frac{2R}{|w-z|}\,.
\end{equation}
Since~$-\Delta G^{(m)}_\Lambda(w,\cdot)=\delta_w(\cdot)-m^2G^{(m)}_\Lambda(w,\cdot)$, one has the identity
\begin{equation}
\label{eq:Gm=G-GGm}
G^{(m)}_\Lambda(w,z)\ =\ G_\Lambda(w,z)-m^2\int_\Omega G_\Lambda(w,w')G^{(m)}_\Lambda(w',z)dA(w').
\end{equation}
Note that the identity~\eqref{eq:Gm=G-GGm} is nothing but a continuous counterpart of the similar identity~\eqref{eq:Zm(w,z)=Z-ZmZ} for the partition functions of random walks discussed in Section~\ref{sect:notation}.

\begin{lemma} There exists an absolute constant~$C>0$ such that, for each simply connected domain~$\Lambda\subset \mathbb C$, its uniformization~$\phi_\Lambda:\Lambda\to\mathbb H$, and~$z,w\in\Lambda$, the following estimates are fulfilled:
\begin{equation}
\label{eq:PQGbound}
\biggl|\frac{P_\Lambda(w)}{P_\Lambda(z)}-1\biggr|\cdot G_\Lambda(w,z)\le C,\qquad \biggl|\frac{Q_\Lambda(w)}{P_\Lambda(w)}-\frac{Q_\Lambda(z)}{P_\Lambda(z)}\biggr|\cdot \frac{G_\Lambda(w,z)}{P_\Lambda(z)}\le C.
\end{equation}
\end{lemma}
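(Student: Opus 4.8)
The plan is to reduce both estimates to the single, conformally invariant fact that for the upper half-plane $\mathbb{H}$ with the standard choice $\phi=\mathrm{id}$, the functions $P_{\mathbb H}(z)=-\tfrac1\pi\Im(1/z)$, $Q_{\mathbb H}(z)=-\tfrac1\pi\Im(1/z^2)$ and $G_{\mathbb H}(w,z)=\tfrac1{2\pi}\log|(w-\bar z)/(w-z)|$ have explicit formulas, and then transport the bounds to a general $\Lambda$ via $w\mapsto\phi_\Lambda(w)$, $z\mapsto\phi_\Lambda(z)$. Indeed $P_\Lambda$, $Q_\Lambda$ are by definition pullbacks under $\phi_\Lambda$ of $P_{\mathbb H}$, $Q_{\mathbb H}$, while the Green function is conformally invariant, $G_\Lambda(w,z)=G_{\mathbb H}(\phi_\Lambda(w),\phi_\Lambda(z))$. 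So it suffices to prove
\[
\biggl|\frac{P_{\mathbb H}(\zeta')}{P_{\mathbb H}(\zeta)}-1\biggr|\cdot G_{\mathbb H}(\zeta',\zeta)\le C,
\qquad
\biggl|\frac{Q_{\mathbb H}(\zeta')}{P_{\mathbb H}(\zeta')}-\frac{Q_{\mathbb H}(\zeta)}{P_{\mathbb H}(\zeta)}\biggr|\cdot\frac{G_{\mathbb H}(\zeta',\zeta)}{P_{\mathbb H}(\zeta)}\le C
\]
for all $\zeta=x+iy$, $\zeta'=x'+iy'$ in $\mathbb{H}$, with an absolute constant $C$. This is now a self-contained two-variable calculus exercise.

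For the concrete computation I would write $P_{\mathbb H}(\zeta)=\tfrac1\pi\,y/|\zeta|^2$ and $Q_{\mathbb H}(\zeta)/P_{\mathbb H}(\zeta)=2x/|\zeta|^2\cdot|\zeta|^2/\dots$ — more precisely $Q_{\mathbb H}(\zeta)=\tfrac1\pi\cdot 2xy/|\zeta|^4$, hence $Q_{\mathbb H}(\zeta)/P_{\mathbb H}(\zeta)=2x/|\zeta|^2$, a bounded harmonic-type function with $|Q_{\mathbb H}/P_{\mathbb H}|\le 1/|\zeta|\le\dots$; in fact $|2x/|\zeta|^2|\le 1/\min(|\zeta|)$ is not bounded, so one must keep the $P_{\mathbb H}(\zeta)$ in the denominator of the second estimate and combine. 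The key elementary inequality underlying both parts is that $G_{\mathbb H}(\zeta',\zeta)$ decays logarithmically when $\zeta'$ is far from $\zeta$ (on the scale of $\Im\zeta$) and, crucially, also decays when $\zeta'$ is close to the real axis, i.e. $G_{\mathbb H}(\zeta',\zeta)\le \min\{\,\tfrac1{2\pi}\log\tfrac{C\,\Im\zeta}{|\zeta'-\zeta|},\ \tfrac{C\,\Im\zeta'}{|\zeta'-\zeta|}\,\}$ (the first from the explicit formula, the second a Beurling-type bound $G_{\mathbb H}(\zeta',\zeta)\le \tfrac1\pi \Im\zeta'\cdot\Im\zeta/|\zeta'-\zeta|^2$ symmetrically). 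Using $|P_{\mathbb H}(\zeta')/P_{\mathbb H}(\zeta)-1|=|\,y'|\zeta|^2-y|\zeta'|^2\,|/(y|\zeta'|^2)$ and bounding this ratio crudely by $C(|\zeta'-\zeta|/\Im\zeta)\cdot(1+|\zeta'-\zeta|/\Im\zeta)$ on the region where $|\zeta'-\zeta|\le\tfrac12\Im\zeta$, one gets the product with $G_{\mathbb H}$ bounded there; on the complementary region $|\zeta'-\zeta|\ge\tfrac12\Im\zeta$ one splits further according to whether $\Im\zeta'$ is comparable to $\Im\zeta$ or much smaller, using the logarithmic vs. the Beurling bound respectively, and in each case the polynomial growth of the prefactor is beaten. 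The second estimate is handled by the same case analysis after writing $Q_{\mathbb H}(\zeta')/P_{\mathbb H}(\zeta')-Q_{\mathbb H}(\zeta)/P_{\mathbb H}(\zeta)=2x'/|\zeta'|^2-2x/|\zeta|^2$ and noting $|2x'/|\zeta'|^2-2x/|\zeta|^2|\le C|\zeta'-\zeta|/(\,|\zeta'|\,|\zeta|\,)$ (a Lipschitz estimate for $2x/|\zeta|^2$), so the left side of the second inequality is at most $C\,(|\zeta'-\zeta|/|\zeta'|)\cdot(|\zeta|/\Im\zeta)\cdot G_{\mathbb H}(\zeta',\zeta)/|\zeta|$, and again the Beurling/logarithmic dichotomy for $G_{\mathbb H}$ closes the estimate.

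The main obstacle I anticipate is purely bookkeeping: organizing the region split so that the polynomial prefactors $(|\zeta'-\zeta|/\Im\zeta)^k$ are uniformly controlled by the decay of $G_{\mathbb H}$ in every regime — in particular handling the ``boundary'' regime where $\Im\zeta'\ll\Im\zeta$ (or vice versa), where $P_{\mathbb H}(\zeta')$ is tiny but the Beurling bound on $G_{\mathbb H}$ makes the product small, and symmetrically the regime where $|\zeta|$ or $|\zeta'|$ is large. No genuine difficulty is expected beyond keeping the constants absolute; conformal invariance does all the real work, and the scale invariance of $\mathbb{H}$ under $\zeta\mapsto\lambda\zeta$ can be used to normalize $\Im\zeta=1$ at the outset, further reducing the number of cases.
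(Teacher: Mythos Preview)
Your reduction to $\mathbb H$ via conformal invariance is exactly the paper's first step. However, you then use only the scaling symmetry $\zeta\mapsto\lambda\zeta$ to normalize $\Im\zeta=1$, leaving yourself a two-variable estimate and a multi-region case analysis. The paper observes something stronger: both expressions in~\eqref{eq:PQGbound} are invariant under the \emph{full} two-parameter group of M\"obius automorphisms of $\mathbb H$ fixing $0$, i.e.\ maps $\zeta\mapsto a\zeta/(c\zeta+d)$ with $a,c,d\in\mathbb R$, $ad>0$. (Check: $P$ scales by $d/a$, $Q$ transforms as $(d/a)^2 Q + \mathrm{const}\cdot P$, and $G$ is conformally invariant, so both quotients transform exactly so as to leave the full expressions unchanged.) This group acts transitively on $\mathbb H$, so one can normalize $\phi_\Lambda(z)=i$. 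The two inequalities then become the boundedness of the single-variable functions
\[
\phi\ \longmapsto\ \bigl|\Im(\phi^{-1})+1\bigr|\cdot G_{\mathbb H}(\phi,i)
\quad\text{and}\quad
\phi\ \longmapsto\ \frac{2|\Re\phi|}{|\phi|^{2}}\cdot G_{\mathbb H}(\phi,i)
\]
on $\overline{\mathbb H}$, which follows immediately from continuity (the logarithmic singularity of $G_{\mathbb H}$ at $\phi=i$ is killed by the vanishing prefactors) and decay at infinity. No case split is needed.

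Your route would work---the regime analysis you outline is sound, modulo a stray factor of $|\zeta|$ in your final displayed bound for the second inequality---but it is considerably more labor than necessary. The missing observation is the extra M\"obius invariance, which collapses the problem to one variable.
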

\begin{proof}
It is easy to see that both expressions are invariant under M\"obius automorphisms of~$\mathbb H$ preserving the point~$0$. Therefore, one can assume~$\phi_\Lambda(z)=i$ without loss of generality. In this situation, the required estimates~\eqref{eq:PQGbound} are nothing but the claim that both functions
\begin{align*}
&|\Im \phi_\Lambda(w)^{-1}+1|\cdot G_{\mathbb H}(\phi_\Lambda(w),i)\qquad \text{and} \\
& \frac{|\Im \phi_\Lambda(w)^{-2}|}{|\Im \phi_\Lambda(w)^{-1}|}\cdot G_{\mathbb H}(\phi_\Lambda(w),i)= \frac{2|\Re \phi_\Lambda(w)|}{|\phi_\Lambda(w)|^3}\cdot G_{\mathbb H}(\phi_\Lambda(w),i),
\end{align*}
are bounded in the upper half-plane, which is clearly true since both of them are continuous in~$\phi=\phi_\Lambda(w)\in\overline{\mathbb H}$ (including at the point~$i$) and decay as~$|\phi|\to\infty$.
\end{proof}
\begin{remark} For later purposes, it is useful to rewrite~\eqref{eq:PQGbound} as
\begin{align}
P_\Lambda(w)G_\Lambda(w,z)&\le P_\Lambda(z)G_\Lambda(w,z)+CP_\Lambda(z), \label{eq:PGbound}\\
|Q_\Lambda(w)|G_\Lambda(w,z)&\le CP_\Lambda(z)P_\Lambda(w)+P_\Lambda(w)G_\Lambda(w,z)(P_\Lambda(z))^{-1}|Q_\Lambda(z)| \notag \\
&\le CP_\Lambda(z)P_\Lambda(w)+|Q_\Lambda(z)|G_\Lambda(w,z)+C|Q_\Lambda(z)|. \label{eq:QGbound}
\end{align}
\end{remark}
We now introduce massive counterparts of the functions~\eqref{eq:PQdef} as follows:
\begin{align}
\label{eq:Pmdef} P_\Lambda^{(m)}(z)&:=P_\Lambda(z)-m^2\int_{\Lambda}P_\Lambda(w)G_\Lambda^{(m)}(w,z)dA(w),\\
\label{eq:Qmdef} Q_\Lambda^{(m)}(z)&:=Q_\Lambda(z)-m^2\int_{\Lambda}Q_\Lambda(w)G_\Lambda^{(m)}(w,z)dA(w).
\end{align}
\begin{remark}
\label{rem:PmQm-welldef} (i) The estimate~\eqref{eq:PGbound} ensures that the massive Poisson kernel~$P_\Lambda^{(m)}(z)$ is well-defined since the only possible pathology in the integral is at~$w=z$, where the integrand is bounded from above by a multiple of the Green function~$G_\Lambda(w,z)$. Moreover, one easily sees that
\begin{equation}\label{eq:Pm/P>c}
\exp(-c_0m^2R^2){P_t(a_t,z)}\le{P_t^{(m)}(a_t,z)}\le{P_t(a_t,z)}
\end{equation}
due to Proposition~\ref{prop:Pm/Pconv} and similar uniform bounds provided by Corollary~\ref{cor:Zm/Z>c}.

\smallskip

\noindent (ii) On the contrary, \eqref{eq:QGbound} {only} guarantees that the function $Q_\Lambda^{(m)}$ is well-defined under the additional assumption~$\int_\Lambda P_\Lambda(w)dA(w)<+\infty$. Though this is not always true in general, it follows from Corollary~\ref{cor:intPconv}(i) given below that this assumption holds for almost all (in~$t$) domains~$\Lambda=\Omega_t$ generated by a Loewner evolution in~$\Omega$.
\end{remark}

\begin{lemma} The following identity is fulfilled for all~$z\in\Lambda$:
\begin{equation}
\label{eq:Pmdef1}
P_\Lambda^{(m)}(z)=P_\Lambda(z)-m^2\int_{\Lambda}P_\Lambda^{(m)}(w)G_\Lambda(w,z)dA(w).
\end{equation}
\end{lemma}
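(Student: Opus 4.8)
The plan is to obtain \eqref{eq:Pmdef1} directly from the definition \eqref{eq:Pmdef} of $P_\Lambda^{(m)}$ together with the resolvent identity \eqref{eq:Gm=G-GGm}, avoiding any discussion of the boundary behaviour of $\Lambda$ near the prime end $a$. Since both $G_\Lambda$ and $G_\Lambda^{(m)}$ are symmetric in their two arguments (the kernels of the self-adjoint operators $(-\Delta_\Lambda)^{-1}$ and $(-\Delta_\Lambda+m^2)^{-1}$), the identity \eqref{eq:Gm=G-GGm} can be rewritten as
\[
G_\Lambda^{(m)}(w,z)=G_\Lambda(w,z)-m^2\int_\Lambda G_\Lambda^{(m)}(w,w')G_\Lambda(w',z)\,dA(w').
\]
Substituting this into \eqref{eq:Pmdef} and expanding the product yields
\[
P_\Lambda^{(m)}(z)=P_\Lambda(z)-m^2\!\int_\Lambda P_\Lambda(w)G_\Lambda(w,z)\,dA(w)+m^4\!\int_\Lambda\!\int_\Lambda P_\Lambda(w)G_\Lambda^{(m)}(w,w')G_\Lambda(w',z)\,dA(w)\,dA(w').
\]
Once Fubini's theorem is justified (see below), the last term equals $m^4\int_\Lambda G_\Lambda(w',z)\bigl(\int_\Lambda P_\Lambda(w)G_\Lambda^{(m)}(w,w')\,dA(w)\bigr)dA(w')$, and the inner integral is precisely $m^{-2}\bigl(P_\Lambda(w')-P_\Lambda^{(m)}(w')\bigr)$ by \eqref{eq:Pmdef} applied with $z$ replaced by $w'$. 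Hence
\[
P_\Lambda^{(m)}(z)=P_\Lambda(z)-m^2\!\int_\Lambda P_\Lambda(w)G_\Lambda(w,z)\,dA(w)+m^2\!\int_\Lambda\bigl(P_\Lambda(w')-P_\Lambda^{(m)}(w')\bigr)G_\Lambda(w',z)\,dA(w'),
\]
the first and third integrals cancel after renaming the dummy variable, and what remains is exactly \eqref{eq:Pmdef1}.

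The only genuinely nontrivial point is the absolute integrability needed to apply Fubini, since $\int_\Lambda P_\Lambda(w)\,dA(w)$ need not be finite in general (cf. Remark~\ref{rem:PmQm-welldef}(ii)). This is settled by the a priori bound \eqref{eq:PGbound}. Using $G_\Lambda^{(m)}\le G_\Lambda$ and then \eqref{eq:PGbound} in the form $P_\Lambda(w)G_\Lambda(w,w')\le P_\Lambda(w')G_\Lambda(w,w')+CP_\Lambda(w')$, one bounds the modulus of the double integrand by $\bigl(P_\Lambda(w')G_\Lambda(w,w')+CP_\Lambda(w')\bigr)G_\Lambda(w',z)$. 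Integrating first in $w$, and using that $\int_\Lambda G_\Lambda(w,w')\,dA(w)$ is bounded uniformly in $w'$ by a constant $c_R$ (because of \eqref{eq:Gbound} and $\Lambda\subset B(0,R)$), the double integral is at most $(c_R+C\,\mathrm{Area}(\Lambda))\int_\Lambda P_\Lambda(w')G_\Lambda(w',z)\,dA(w')$, which is finite by one more application of \eqref{eq:PGbound} with $z$ fixed. The same estimate shows that the two single integrals $\int_\Lambda P_\Lambda(w)G_\Lambda(w,z)\,dA(w)$ and $\int_\Lambda P_\Lambda^{(m)}(w)G_\Lambda(w,z)\,dA(w)$ are finite, so all the manipulations above are legitimate.

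I expect the Fubini/integrability step to be the only real obstacle; the algebraic cancellation is then immediate. As an alternative, slightly less self-contained route, one may observe that both sides of \eqref{eq:Pmdef1} solve $-\Delta u=-m^2P_\Lambda^{(m)}$ in $\Lambda$ and, by the estimates of this section, are of order $O(P_\Lambda(z))$ as $z\to a$ while vanishing on the rest of $\partial\Lambda$, so they coincide by uniqueness; we prefer the direct computation as it sidesteps any discussion of the boundary near the prime end $a$.
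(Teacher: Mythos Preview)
Your proof is correct and follows essentially the same route as the paper: substitute the resolvent identity~\eqref{eq:Gm=G-GGm} into the definition~\eqref{eq:Pmdef}, apply Fubini to the resulting double integral, and justify the absolute integrability via the bound~\eqref{eq:PGbound} (the paper packages this as $P_\Lambda(w)G_\Lambda^{(m)}(w,w')G_\Lambda(w',z)\le P_\Lambda(z)(G_\Lambda(w,w')+C)(G_\Lambda(w',z)+C)$, which is the same two-step use of~\eqref{eq:PGbound} you carry out). The only cosmetic difference is that the paper starts from the integral on the right-hand side of~\eqref{eq:Pmdef1} and expands $P_\Lambda^{(m)}(w)$, whereas you start from~\eqref{eq:Pmdef} and expand $G_\Lambda^{(m)}(w,z)$; after Fubini both computations coincide.
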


\begin{proof} Note that the integral converges due to~\eqref{eq:PGbound} and since~$P_\Lambda^{(m)}(w)\le P_\Lambda(w)$. Moreover, one has
\begin{align*}
& \int_{\Lambda}P_\Lambda^{(m)}(w)G_\Lambda(w,z)dA(w)\\ & =\
\int_{\Lambda}\biggl[P_\Lambda(w)-m^2\int_{\Lambda}
P_\Lambda(w')G_\Lambda^{(m)}(w',w)dA(w')\biggr]G_\Lambda(w,z)dA(w)\\
& =\ \int_\Lambda P_\Lambda(w)\biggl[G_\Lambda(w,z)-\int_\Lambda G_\Lambda^{(m)}(w,w')G_\Lambda(w',z)dA(w')\biggr]dA(w)\\
& =\ \int_\Lambda P_\Lambda(w)G_\Lambda^{(m)}(w,z)dA(w)\ =\ P_\Lambda^{(m)}(z),
\end{align*}
where the application of the Fubini theorem in the second equality is based upon the uniform estimate
\[
P_\Lambda(w)G_\Lambda^{(m)}(w,w')G_\Lambda(w',z)\le P_\Lambda(z)(G_\Lambda(w,w')+C)(G_\Lambda(w',z)+C)
\]
which follows from~\eqref{eq:PGbound}.
\end{proof}
Assume now that~$b:=\phi_\Lambda^{-1}(\infty)$ is a degenerate prime end of~$\Lambda$. The representation~\eqref{eq:Pmdef1} together with the discussion given in Section~\ref{sect:boundarybehavior} allows one to define the following quantity (note that here and below we abuse the notation in a way similar to Section~\ref{sect:boundarybehavior} when writing the ratio of two functions, both satisfying Dirichlet boundary conditions, at a boundary point~$b$):
\begin{equation}
\label{eq:Pm/P(b)def}
\frac{P_\Lambda^{(m)}(b)}{P_\Lambda(b)}\ :=\ \lim_{z\to b} \frac{P_\Lambda^{(m)}(z)}{P_\Lambda(z)} \ =\ 1-m^2\int_\Lambda P_\Lambda^{(m)}(w)\frac{G_\Lambda(w,b)}{P_\Lambda(b)}dA(w).
\end{equation}
Indeed, one can exchange the limit~$z\to b$ and the integration over~$w\in\Lambda$ due to the uniform estimate~\eqref{eq:PGbound}, which provides a majorant
\[
P_\Lambda^{(m)}(w)\frac{G_\Lambda(w,z)}{P_\Lambda(z)}\ \le\ \frac{P_\Lambda(w)G_\Lambda(w,z)}{P_\Lambda(z)}\ \le\  G_\Lambda(w,z)+C,
\]
and the fact that~$\max_{z\in B_\Lambda(b,r)}\int_{B_{\Lambda}(b,2r)}G_\Lambda(w,z)dA(w)\to 0$ as~$r\to 0$, which follows from~\eqref{eq:Gbound} and allows one to neglect the contributions of vicinities of the point~$b$ (where the Green function blows up and thus no uniform in~$z$ majorant is available).

\subsection{Hadamard's formula}\label{sect:Hadamard} We now move to the Loewner equation setup and assume that a decreasing family of subdomains~$\Omega_t\subset\Omega$ is constructed according to~\eqref{eq:Loewner} and that their uniformizations onto the upper half-plane are fixed as
\[
\phi_t:=(g_t-\xi_t)\circ\phi_\Omega:\Omega_t\to\mathbb H
\]
so that, in particular,~$\phi_t(a_t)=0$ and~$\phi_t(b)=\infty$.  For shortness, from now onwards we replace the subscript~$\Omega_t$ by~$t$, thus we write $G_t(w,z)$ instead of $G_{\Omega_t}(w,z)$, $P_t(z)$~instead of $P_{\Omega_t}(z)=P_{\Omega_t}(a_t,z)$, etc. The following lemma is classical.
\begin{lemma}[Hadamard's formula] \label{lem:Hadamard}
For each~$z,w\in\Omega$ the function~$G_t(z,w)$ is differentiable in~$t$ (until the first moment when either~$z\not\in\Omega_t$ or~$w\not\in\Omega_t$) and
\begin{equation}\label{eq:Hadamard}
\partial_tG_t(w,z)=-2\pi P_t(w)P_t(z).
\end{equation}
\end{lemma}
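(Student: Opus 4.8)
The plan is to prove Hadamard's formula~\eqref{eq:Hadamard} by working in the reference half-plane $\mathbb{H}$, where the Loewner flow takes its simplest form, and then transporting the identity back to $\Omega_t$ via the conformal invariance of the Green function. Write $Z:=\phi_\Omega(z)$ and $W:=\phi_\Omega(w)$ for the images of the two fixed points in $\mathbb{H}$, and recall that $G_t(w,z)=G_{\mathbb{H}\smallsetminus\gamma_{\mathbb{H}}[0,t]}(W,Z)=G_{\mathbb{H}}(g_t(W),g_t(Z))$ since $\phi_t=(g_t-\xi_t)\circ\phi_\Omega$ uniformizes $\Omega_t$ onto $\mathbb{H}$ and the Green function is conformally invariant. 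Here $G_{\mathbb{H}}(\zeta,\eta)=-\frac{1}{2\pi}\log\bigl|\frac{\zeta-\eta}{\zeta-\overline\eta}\bigr|$ is the explicit half-plane Green function. Both $g_t(W)$ and $g_t(Z)$ are differentiable in $t$ by the Loewner equation~\eqref{eq:Loewner}, so $G_t(w,z)$ is differentiable in $t$ as a composition, which settles the first assertion of the lemma.

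First I would differentiate $t\mapsto G_{\mathbb{H}}(g_t(W),g_t(Z))$ using the chain rule. Writing $\zeta_t:=g_t(W)$, $\eta_t:=g_t(Z)$, and using $\partial_t\zeta_t=\frac{2}{\zeta_t-\xi_t}$, $\partial_t\eta_t=\frac{2}{\eta_t-\xi_t}$ from~\eqref{eq:Loewner}, one gets
\begin{equation}
\notag
\partial_t G_t(w,z)\ =\ \partial_\zeta G_{\mathbb{H}}(\zeta_t,\eta_t)\cdot\frac{2}{\zeta_t-\xi_t}\ +\ \partial_\eta G_{\mathbb{H}}(\zeta_t,\eta_t)\cdot\frac{2}{\eta_t-\xi_t}\,.
\end{equation}
Now I would compute the partial derivatives of the explicit kernel: from $G_{\mathbb{H}}(\zeta,\eta)=-\frac{1}{4\pi}\bigl(\log(\zeta-\eta)+\log(\overline{\zeta-\eta})-\log(\zeta-\overline\eta)-\log(\overline{\zeta-\overline\eta})\bigr)$ one reads off, treating $\zeta,\overline\zeta$ as independent, that the holomorphic derivative in $\zeta$ equals $-\frac{1}{4\pi}\bigl(\frac{1}{\zeta-\eta}-\frac{1}{\zeta-\overline\eta}\bigr)=-\frac{1}{4\pi}\cdot\frac{\eta-\overline\eta}{(\zeta-\eta)(\zeta-\overline\eta)}=-\frac{1}{2\pi}\cdot\frac{\Im\eta}{(\zeta-\eta)(\zeta-\overline\eta)}\cdot i^{-1}\cdot i$; more cleanly, $\partial_\zeta G_{\mathbb{H}}=-\frac{1}{2\pi}\Im\bigl(\frac{1}{\zeta-\overline\eta}\bigr)\cdot(\dots)$ — the point being that after substituting and taking $2\Re$ of the resulting expression (since $G_t$ is real and the $\zeta,\overline\zeta$ contributions are complex conjugates), the algebra collapses. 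The cleanest route is to note that $\xi_t$ is real and $\xi_t=\phi_t(a_t)$, so $\frac{1}{\zeta_t-\xi_t}=\frac{1}{g_t(\phi_\Omega(w))-\xi_t}$ is exactly $-\pi$ times the value at $w$ of the half-plane Poisson kernel pulled back to $\Omega_t$, consistent with $P_t(w)=-\frac{1}{\pi}\Im\frac{1}{\phi_t(w)}$ from~\eqref{eq:PQdef}; a short partial-fraction manipulation of $\partial_\zeta G_{\mathbb{H}}\cdot\frac{2}{\zeta_t-\xi_t}+\text{c.c.}$ combined with the matching $\eta$-term yields precisely $-2\pi P_t(w)P_t(z)$.

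The routine but slightly delicate point is keeping the bookkeeping of real and imaginary parts straight when differentiating $\log|\cdot|$; the standard trick is to use $G_{\mathbb{H}}(\zeta,\eta)=-\frac{1}{2\pi}\Re\log\frac{\zeta-\eta}{\zeta-\overline\eta}$ and differentiate only the holomorphic logarithm, doubling afterwards. I expect the main (and really the only) obstacle to be this clean derivation of
\begin{equation}
\notag
\frac{2}{\zeta-\xi}\cdot\partial_\zeta\Bigl(-\tfrac{1}{2\pi}\log\tfrac{\zeta-\eta}{\zeta-\overline\eta}\Bigr)\ +\ (\zeta\leftrightarrow\eta)\ +\ \text{c.c.}\ =\ -2\pi\cdot\Bigl(-\tfrac1\pi\Im\tfrac{1}{\zeta-\xi}\Bigr)\Bigl(-\tfrac1\pi\Im\tfrac{1}{\eta-\xi}\Bigr),
\end{equation}
which is a finite identity between rational functions of $\zeta,\overline\zeta,\eta,\overline\eta,\xi$ with $\xi\in\mathbb{R}$; it can be verified by clearing denominators, or one can simply cite the classical references (e.g.~\cite{lawler-book}) since, as stated, the lemma is classical. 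Finally, differentiability in $t$ on the stated time interval is automatic from the smoothness of the Loewner flow away from the tip, so nothing further is needed there.
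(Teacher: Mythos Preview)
Your approach is exactly the paper's: write $G_t(w,z)=-\tfrac{1}{2\pi}\Re\log\frac{g_t(\phi_\Omega(w))-g_t(\phi_\Omega(z))}{g_t(\phi_\Omega(w))-\overline{g_t(\phi_\Omega(z))}}$, differentiate via the Loewner equation, and simplify. Where you leave the algebra as ``a finite identity between rational functions'' to be verified, the paper does it in one clean step using $\Re\bigl[\tfrac{1}{AB}-\tfrac{1}{A\overline B}\bigr]=-2\,\Im\tfrac{1}{A}\,\Im\tfrac{1}{B}$ with $A=g_t(\phi_\Omega(w))-\xi_t$, $B=g_t(\phi_\Omega(z))-\xi_t$.
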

\begin{proof} Let~$w_{\mathbb{H}}:=\phi_\Omega(w)$ and~$z_{\mathbb{H}}:=\phi_\Omega(z)$, note that one has
\[
G_t(w,z)=-\frac{1}{2\pi}\log\biggl|\frac{g_t(w_{\mathbb{H}})-g_t(z_{\mathbb{H}})}{g_t(w_{\mathbb{H}})-\overline{g_t(z_{\mathbb{H}})}}\biggr|.
\]
Since both~$g_t(w_{\mathbb{H}})$ and~$g_t(z_{\mathbb{H}})$ satisfy the Loewner equation~\eqref{eq:Loewner}, one easily obtains
\begin{align*}
\partial_t{G}_t(w,z)
&=-\frac{1}{2\pi}\Re\biggl[\frac{\partial_t{g}_t(w_{\mathbb{H}})-\partial_t{g}_t(z_{\mathbb{H}})}{g_t(w_{\mathbb{H}})-g_t(z_{\mathbb{H}})}-\frac{\partial_t{g}_t(w_{\mathbb{H}})- \overline{\partial_t{g}_t(z_{\mathbb{H}})}}{g_t(w_{\mathbb{H}})-\overline{g_t(z_{\mathbb{H}})}}\biggr]\\
&=\frac{1}{\pi}\Re\biggl[\frac{1}{(g_t(w_{\mathbb{H}})\!-\!\xi_t)(g_t(z_{\mathbb{H}})\!-\!\xi_t)}-\frac{1}{(g_t(w_{\mathbb{H}})\!-\!\xi_t)\overline{(g_t(z_{\mathbb{H}})\!-\!\xi_t)}}\biggr]\\
&=-\frac{2}{\pi}\Im\biggl[\frac{1}{g_t(w_{\mathbb{H}})-\xi_t}\biggr]\Im\biggl[\frac{1}{g_t(z_{\mathbb{H}})-\xi_t}\biggr]\ =\ -2\pi P_t(w)P_t(z).\qedhere
\end{align*}
\end{proof}

As pointed out in~\cite{Makarov-Smirnov}, it immediately follows from the Hadamard formula that the integrals~$\int_{\Omega_t}P_t(w)dA(w)$ converge for almost all~$t$, see the next corollary. In our analysis we also need a stronger estimate which guarantees the convergence of integrals~$\int_{\Omega_t}(P_t(w))^2dA(w)$ for almost all~$t$ provided that~$\gamma$ is an SLE(2) curve.

\begin{corollary}\label{cor:intPconv} (i) In the same setup, one has
\[
\int_0^\infty\biggl[\int_{\Omega_t}P_t(w)dA(w)\biggr]^{2}dt\ \le\ \frac{1}{2\pi}\int_\Omega\int_\Omega G_0(w,z)dA(w)dA(z)\ <\ +\infty.
\]
(ii) Moreover, if~$\gamma$ is {an SLE($\kappa$), $\kappa\le 4$, curve} running from~$a$ to~$b$ in~$\Omega$, then
\[
\int_0^\infty\int_{\Omega_t}(P_t(w))^2dA(w)dt\ <\ +\infty\quad \text{almost~surely}.
\]
\end{corollary}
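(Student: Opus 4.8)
The plan is to reduce both parts to Hadamard's formula~\eqref{eq:Hadamard}. For (i): integrating $\partial_tG_t(w,z)=-2\pi P_t(w)P_t(z)$ in $t$ and using that $t\mapsto G_t(w,z)\ge0$ is non-increasing (with the convention $P_t(w):=0$ once $w\notin\Omega_t$) gives $\int_0^\infty P_t(w)P_t(z)\,dt\le\frac1{2\pi}G_0(w,z)$ for all $w,z\in\Omega$. Since $\bigl[\int_{\Omega_t}P_t(w)\,dA(w)\bigr]^2=\int_\Omega\int_\Omega P_t(w)P_t(z)\,dA(w)\,dA(z)$, Tonelli's theorem yields
\[
\int_0^\infty\Bigl[\int_{\Omega_t}P_t(w)\,dA(w)\Bigr]^2dt\ \le\ \frac1{2\pi}\int_\Omega\int_\Omega G_0(w,z)\,dA(w)\,dA(z)\ <\ +\infty ,
\]
the finiteness of the right-hand side being clear from $G_0(w,z)\le\frac1{2\pi}\log\frac{2R}{|w-z|}$ (see~\eqref{eq:Gbound}) and local integrability of $\log|w-z|$ on $\Omega\times\Omega$.

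For (ii) the starting point is a pathwise ``integrated Hadamard'' identity for $P_t(w)^2$. Fix $w\in\Omega$, write $z:=\phi_\Omega(w)$ and $Z_t:=g_t(z)-\xi_t=X_t+iY_t$, so that $P_t(w)=-\frac1\pi\Im Z_t^{-1}=Y_t/(\pi|Z_t|^2)$. For a fixed interior point the Loewner flow $t\mapsto g_t(z)$ solves an ODE driven by the continuous function $\xi$, hence $Y_t=\Im g_t(z)$ and $|g_t'(z)|$ are $C^1$ in $t$; using~\eqref{eq:Loewner} and $\partial_tg_t'(z)=-2g_t'(z)Z_t^{-2}$ one gets $\frac{d}{dt}\log Y_t=-2|Z_t|^{-2}$ and $\frac{d}{dt}\log|g_t'(z)|=-2\Re Z_t^{-2}$, whence for $\Upsilon_t(z):=\Im g_t(z)/|g_t'(z)|=\frac12\crad(z,\mathbb H\smallsetminus K_t)$ one obtains
\[
\tfrac{d}{dt}\log\Upsilon_t(z)\ =\ -\tfrac{4Y_t^2}{|Z_t|^4}\ =\ -4\pi^2P_t(w)^2 .
\]
Since $\kappa\le4$, the image curve $\gamma_{\mathbb H}=\phi_\Omega(\gamma)$ is almost surely simple, so no interior point is ever swallowed, $\Omega_t$ stays connected, and the flow is defined for all $t\ge0$; integrating the last display (with $\Upsilon_0(z)=\Im z$) and using continuity of the conformal radius under kernel convergence gives, almost surely and for Lebesgue-a.e. $w\in\Omega$,
\[
\int_0^\infty P_t(w)^2\,dt\ =\ \frac1{4\pi^2}\log\frac{\crad(z,\mathbb H)}{\crad(z,\mathbb H\smallsetminus K_\infty)},\qquad K_\infty:=\gamma_{\mathbb H}[0,\infty),
\]
where $\crad$ is taken in the connected component containing $z$; the right-hand side is nonnegative because $\mathbb H\smallsetminus K_\infty\subset\mathbb H$.

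It remains to integrate this identity over $w\in\Omega$. Changing variables by $w=\phi_\Omega^{-1}(z)$ --- so that $dA(w)=|(\phi_\Omega^{-1})'(z)|^2\,dA(z)$, while the factor $|(\phi_\Omega^{-1})'(z)|^2$ cancels in the ratio of conformal radii, since $\crad(w,\Omega)=|(\phi_\Omega^{-1})'(z)|\crad(z,\mathbb H)$ and likewise with $\Omega$ replaced by $\Omega\smallsetminus\gamma[0,\infty)$ --- and applying Tonelli, we obtain
\[
\int_0^\infty\!\int_{\Omega_t}P_t(w)^2\,dA(w)\,dt\ =\ \frac1{4\pi^2}\int_{\mathbb H}\log\frac{2\Im z}{\crad(z,\mathbb H\smallsetminus K_\infty)}\,\bigl|(\phi_\Omega^{-1})'(z)\bigr|^2\,dA(z).
\]
By Koebe's distortion theorem $\crad(z,\mathbb H\smallsetminus K_\infty)\ge\dist(z,\partial(\mathbb H\smallsetminus K_\infty))\ge\min(\Im z,\dist(z,\gamma_{\mathbb H}))$, so the integrand is at most $\bigl(\log2+\log^+\frac{\Im z}{\dist(z,\gamma_{\mathbb H})}\bigr)|(\phi_\Omega^{-1})'(z)|^2$. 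Since $\int_{\mathbb H}|(\phi_\Omega^{-1})'(z)|^2\,dA(z)=\mathrm{Area}(\Omega)<+\infty$, it suffices --- by Tonelli and the fact that a nonnegative random variable with finite expectation is almost surely finite --- to bound $\mathbb E\bigl[\log^+\frac{\Im z}{\dist(z,\gamma_{\mathbb H})}\bigr]$ uniformly in $z\in\mathbb H$. This follows from the classical one-point estimate for chordal SLE (see~\cite{rohde-schramm,beffara-dimSLE}): there exist $C<\infty$ and $\alpha>0$ with $\mathbb P[\dist(z,\gamma_{\mathbb H})<\varepsilon]\le C(\varepsilon/\Im z)^\alpha$ for all $z\in\mathbb H$ and $0<\varepsilon\le\Im z$, so by the layer-cake formula $\mathbb E[\log^+\frac{\Im z}{\dist(z,\gamma_{\mathbb H})}]=\int_0^\infty\mathbb P[\dist(z,\gamma_{\mathbb H})<\Im z\,e^{-\lambda}]\,d\lambda\le C/\alpha$, completing the argument.

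The step I expect to be the main obstacle is precisely this last one: a priori $\gamma$ can be extremely irregular near the degenerate prime ends $a,b$, so one cannot directly invoke a Minkowski-content bound for $\gamma$ inside $\Omega$. Pushing the whole computation to the half-plane, where $\gamma_{\mathbb H}$ is a genuine chordal SLE and the one-point estimate holds \emph{uniformly} in $z$, and observing that the only surviving conformal distortion integrates to the finite quantity $\mathrm{Area}(\Omega)$, is what makes the argument go through with no regularity hypotheses on $\Omega$. One should also verify carefully the pathwise $C^1$-dependence of $t\mapsto\Upsilon_t(z)$ on $t$ and the identification of $\lim_{t\to\infty}\Upsilon_t(z)$ with $\frac12\crad(z,\mathbb H\smallsetminus K_\infty)$ for a.e.\ $z$; both rely on $\kappa\le4$ (simplicity of $\gamma_{\mathbb H}$), and the statement genuinely fails for $\kappa\in(4,8)$, since then a fixed interior point is swallowed at a finite time $T_z$ with $\Upsilon_{T_z}(z)=0$, forcing $\int_0^{T_z}P_t(w)^2\,dt=+\infty$.
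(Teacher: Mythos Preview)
Your proof is correct and follows essentially the same route as the paper: both parts reduce to Hadamard's formula, and for (ii) both arguments identify $\int_0^\infty P_t(w)^2\,dt$ with a multiple of $\log\bigl(\crad_\Omega(w)/\crad_{\Omega\smallsetminus\gamma}(w)\bigr)$, pass to the half-plane by conformal invariance, and bound the expectation via a uniform SLE one-point estimate (the paper simply cites~\cite[Section~5.3.6.2]{kemppainen-book} where you spell out the Koebe plus layer-cake argument). Incidentally, your constant $\tfrac{1}{4\pi^2}$ is the right one; the paper's displayed $\tfrac{1}{2\pi}$ in the identity for $\int_0^\infty\!\int_{\Omega_t}(P_t(w))^2\,dA(w)\,dt$ is a harmless slip.
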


\begin{proof} {(i) Given a point~$z\in\Omega$, let $\tau_z:=\inf\{t>0:z\not\in \Omega_t\}$ be the time when~$z$ is hit or swallowed by the curve~$\gamma$ (as usual, we set $\tau_z:=+\infty$ if this does not happen). By integrating the Hadamard formula~\eqref{eq:Hadamard} in~$t$ one easily sees that}
\[
2\pi\int_0^{{\tau_w\wedge\tau_z}}\!\!P_t(w)P_t(z)dt\ \le\ G_0(w,z).
\]
The claim follows by integrating {this inequality over~$w,z\in\Omega$ since}
\[
\int_{\Omega\times\Omega}\int_0^{\tau_w\wedge\tau_z}\!\!P_t(w)P_t(z)dtdA(w)dA(z)\ =\
\int_0^\infty\int_{\Omega_t\times\Omega_t}\!\!P_t(w)P_t(z)dA(w)dA(z)dt.
\]

\smallskip

\noindent (ii) Given a planar (simply connected) domain~$\Lambda$ and~$w\in\Lambda$, let
\[
G^*_\Lambda(w,w)\ :=\ \lim_{z\to w}\bigl(G_\Lambda(w,z)+\tfrac{1}{2\pi}\log|z\!-\!w|\bigr)\ =\ \tfrac{1}{2\pi}\log\crad_\Lambda(w),
\]
where~$\crad_\Omega(w)$ denotes the \emph{conformal radius} of the point~$w$ in~$\Omega$.
A straightforward generalization of Lemma~\ref{lem:Hadamard} implies that $\partial_tG^*_t(w,w)=-2\pi(P_t(w))^2$ for~$w\in\Omega_t$.

Since SLE($\kappa$) curves {with $\kappa\le 4$} are not self-touching, for all $w\in\Omega$ one almost surely has \mbox{$w\in\Omega_t=\Omega\smallsetminus\gamma[0,t]$} for all~$t\le\infty$. {Applying the Fubini theorem as above, we obtain the identity}
\begin{align*}
\int_0^\infty\int_{\Omega_t}(P_t(w))^2dA(w)dt\ &=\ 
\frac{1}{2\pi}\int_\Omega \log\frac{\crad_{\Omega}(w)}{\crad_{\Omega\smallsetminus\gamma[0,\infty]}(w)}\,dA(w),
\end{align*}
where we slightly abuse the notation in the denominator: $\crad_{\Omega\smallsetminus\gamma[0,\infty]}(w)$ stands for the conformal radius of~$w$ in one of the two components of~$\Omega\smallsetminus\gamma[0,\infty]$ to which this point belongs. Standard estimates (e.g., see~\cite[Section~5.3.6.2]{kemppainen-book}) for the SLE($\kappa$) curves $\phi_\Omega(\gamma)$ \emph{in the upper half-plane $\mathbb H$} imply
\[
\mathbb E\biggl[\,\log\frac{\crad_\Omega(w)}{\crad_{\Omega\smallsetminus\gamma[0,\infty]}(w)}\,\biggr]\ =\ \mathbb E\biggl[\,\log\frac{\crad_{\mathbb H}(\phi_\Omega(w))}{\crad_{\mathbb H\smallsetminus\phi_\Omega(\gamma[0,\infty])}(\phi_\Omega(w))}\,\biggr]\ \le\ \const,
\]
uniformly over $w\in\Omega$. Therefore,
\[
\mathbb E\biggl[\,\int_0^\infty\int_{\Omega_t}(P_t(w))^2dA(w)dt\,\biggr]\ \le\ \const\cdot\mathrm{Area}(\Omega)
\]
and, in particular, this integral is finite almost surely.
\end{proof}

We now derive a counterpart of Lemma~\ref{lem:Hadamard} in the massive setup.
\begin{lemma}[massive Hadamard's formula] \label{lem:mHadamard}
In the same setup, the massive Green function~$G_t^{(m)}(w,z)$ is differentiable in~$t$ (until the first moment when either~$z\not\in\Omega_t$ or~$w\not\in\Omega_t$) and
\begin{equation}\label{eq:mHadamard}
\partial_tG^{(m)}_t(w,z)=-2\pi P^{(m)}_t(w)P^{(m)}_t(z),
\end{equation}
where the massive Poisson kernels~$P^{(m)}_t(w)$,~$P^{(m)}(z)$ in~$\Omega_t$ are given by~\eqref{eq:Pmdef}.
\end{lemma}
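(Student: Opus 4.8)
The plan is to differentiate the resolvent identity~\eqref{eq:Gm=G-GGm} in~$t$ and to combine the result with the classical Hadamard formula~\eqref{eq:Hadamard} and the defining relations~\eqref{eq:Pmdef}, \eqref{eq:Pmdef1} for the massive Poisson kernel. Fix $w,z\in\Omega$ and a time~$t$ with $w,z\in\Omega_t$, and (for the moment granting that $s\mapsto G^{(m)}_s(w,z)$ is differentiable at $s=t$ with derivative obtained by differentiating under the integral in~\eqref{eq:Gm=G-GGm}) set $F_t(w,z):=\partial_tG^{(m)}_t(w,z)$. Differentiating~\eqref{eq:Gm=G-GGm} and substituting~\eqref{eq:Hadamard} for $\partial_tG_t(w,w')$ and $\partial_tG_t(w,z)$ gives
\begin{align*}
F_t(w,z)\ =\ -2\pi P_t(w)P_t(z)&\ +\ 2\pi m^2P_t(w)\int_{\Omega_t}P_t(w')G^{(m)}_t(w',z)\,dA(w')\\
&\ -\ m^2\int_{\Omega_t}G_t(w,w')F_t(w',z)\,dA(w').
\end{align*}
By the definition~\eqref{eq:Pmdef} of $P^{(m)}_t(z)$ the first two terms on the right combine into $-2\pi P_t(w)P^{(m)}_t(z)$, so that $F_t(\cdot,z)$ solves the integral equation $u(w)=-2\pi P_t(w)P^{(m)}_t(z)-m^2\int_{\Omega_t}G_t(w,w')u(w')\,dA(w')$. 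A one-line computation using~\eqref{eq:Pmdef1} shows that $u(w)=-2\pi P^{(m)}_t(w)P^{(m)}_t(z)$ solves the same equation.

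Hence the difference $D_t(\cdot,z)$ of these two solutions satisfies $D_t(w,z)=-m^2\int_{\Omega_t}G_t(w,w')D_t(w',z)\,dA(w')$, i.e. $(-\Delta+m^2)D_t(\cdot,z)=0$ in $\Omega_t$ with zero Dirichlet boundary conditions; since $-\Delta+m^2$ is positive definite, this forces $D_t\equiv0$, which is the assertion of the lemma. The integrability needed to run this uniqueness argument (and to integrate by parts) follows, after a short bootstrap from the integral equation, from the a priori bounds~\eqref{eq:PGbound}, \eqref{eq:Gbound} and~\eqref{eq:Pm/P>c} of Section~\ref{sect:estimates}, which control $F_t$ near the degenerate prime ends $a_t$ and $b$.

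The main obstacle is the point deferred above: establishing that $t\mapsto G^{(m)}_t(w,z)$ is differentiable and that one may differentiate under the integral sign in~\eqref{eq:Gm=G-GGm}. For fixed $w,z$ at distance $\ge r_0$ from $\gamma[0,t]$, the estimate~\eqref{eq:PGbound} supplies a $t$-locally uniform integrable majorant (a multiple of $G_t(w,\cdot)+1$) for the integrand $G_t(w,\cdot)G^{(m)}_t(\cdot,z)$ and for its difference quotients in~$t$, while~\eqref{eq:Hadamard}, together with the continuity of $g_t$ and $\xi_t$, provides the pointwise $t$-derivative of the kernel; the contribution of the shrinking region $\Omega_t\smallsetminus\Omega_{t+h}\subset\gamma[t,t+h]$ is negligible since the curves under consideration have zero area. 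An alternative that avoids the uniqueness step entirely is available at least for $m^2<\lambda_1(B(0,R))$: there the Neumann series $G^{(m)}_t=\sum_{k\ge0}(-m^2)^k\mathcal{G}_t^{k+1}$ (with $\mathcal{G}_t$ the integral operator of kernel $G_t$) converges uniformly in~$t$, and differentiating it term by term via~\eqref{eq:Hadamard} produces, after reorganising the resulting Cauchy product, exactly $-2\pi P^{(m)}_t(w)P^{(m)}_t(z)$, because $\sum_{j\ge0}(-m^2)^j\mathcal{G}_t^{j}P_t=P^{(m)}_t$ by~\eqref{eq:Pmdef1}; the restriction on~$m$ is then removed by a standard analyticity/resolvent argument in~$m^2$.
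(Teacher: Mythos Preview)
Your overall strategy coincides with the paper's: differentiate the resolvent identity~\eqref{eq:Gm=G-GGm}, arrive at the integral equation $(\mathrm{Id}+m^2\mathfrak{G}_t)\,\partial_tG^{(m)}_t(\cdot,z)=-2\pi P_t(\cdot)P^{(m)}_t(z)$, and solve it. Where you verify the candidate $-2\pi P^{(m)}_t(w)P^{(m)}_t(z)$ via~\eqref{eq:Pmdef1} and then invoke a separate uniqueness argument (positivity of $-\Delta+m^2$), the paper simply applies the explicit left inverse $(\mathrm{Id}-m^2\mathfrak{G}^{(m)}_t)$ furnished by the resolvent identity itself, which yields the answer directly and avoids the need to discuss boundary behaviour of the difference $D_t$.

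The substantive gap is in your treatment of differentiability. To dominate the difference quotients of the integrand $G_t(w,w')G^{(m)}_t(w',z)$ you implicitly need control on the increments of $G^{(m)}_t$ in~$t$, but that is exactly what is being proved; the appeal to~\eqref{eq:PGbound} and~\eqref{eq:Hadamard} alone does not supply it. The paper closes this circle cleanly: it first observes (via the discrete approximations, Proposition~\ref{prop:Gm-conv} and Corollary~\ref{cor:G-conv}, where the inequality is trivial) that increments of $G^{(m)}_t$ are dominated by those of $G_t$, whence $t\mapsto G^{(m)}_t(w,z)$ is absolutely continuous with $0\le -\partial_tG^{(m)}_t(w,z)\le 2\pi P_t(w)P_t(z)$ almost everywhere. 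With this a~priori bound in hand, the monotonicity of both factors justifies differentiation under the integral by Tonelli, the identity~\eqref{eq:mHadamard} is obtained for almost all $(t,w,z)$, and continuity upgrades it to all $(t,w,z)$. Your Neumann-series alternative is correct for small~$m$, but the extension ``by analyticity'' would itself require justification comparable to what you are trying to avoid.
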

\begin{proof} It is easy to see that the increments of~$G^{(m)}_t(w,z)$ are bounded by those of~$G_t(w,z)$: e.g., this follows from Proposition~\ref{eq:Gm-conv}, Corollary~\ref{cor:G-conv} and the similar inequality in discrete which is trivial. Therefore, $G^{(m)}_t(w,z)$ is an {absolutely} continuous function of~$t$, the derivative~$\partial_tG^{(m)}_t(w,z)$ exists (given $w,z$) for almost all~$t$ and
\begin{equation}
\label{eq:x-paGm<paG<PP}
0 \le -\partial_tG^{(m)}_t(w,z)\le -\partial_tG_t(w,z) \le 2\pi P_t(w)P_t(z).
\end{equation}
{Due to the Tonelli theorem, for almost all $t$ the derivative $\partial_t G^{(m)}_t(w,z)$ also exists simultaneously for almost all~$w,z\in\Omega$. Moreover, note that it is sufficient to prove~\eqref{eq:mHadamard} for \emph{almost} all $t,w,z$: if this is done, the same claim for all $t,w,z$ (and, in particular, the existence of $\partial_t G^{(m)}_t(w,z)$ for \emph{all} $t,w,z$) follows from the continuity of the massive Green function and massive Poisson kernels in $(t,w,z)$.}

\smallskip

Differentiating in~$t$ the resolvent identity (see~\eqref{eq:Gm=G-GGm})
\[
G^{(m)}_t(w,z)\ =\ G_t(w,z)-m^2\int_\Omega G_t(w,w')G^{(m)}_t(w',z)dA(w')
\]
(since {both}~$G_t(w,w')$ and~$G_t^{(m)}(w',z)$ are monotone in~$t$, this {differentiation can be justified by} the Tonelli theorem) and using Lemma~\ref{lem:Hadamard} one obtains
\begin{align*}
\partial_t G^{(m)}_t(w,z)\ &=\ -2\pi P_t(w)P_t(z)+2\pi m^2\int_\Omega P_t(w)P_t(w')G^{(m)}_t(w',z)dA(w')\\
&\phantom{\ -2\pi P_t(w)P_t(z)+2\pi}-m^2\int_\Omega G_t(w,w')\partial_t G^{(m)}_t(w',z)dA(w')\\
&=\ -2\pi P_t(w)P^{(m)}_t(z)-m^2\int_\Omega G_t(w,w')\partial_t G^{(m)}_t(w',z)dA(w').
\end{align*}
Denote by~$\mathfrak{G}_t=(-\Delta)^{-1}$ and~$\mathfrak{G}^{(m)}_t=(-\Delta+m^2)^{-1}$ integral operators acting on test function~$h:\Omega_t\to\mathbb R$ as follows:
\begin{align*}
(\mathfrak{G}_th)(w)\ &:=\ \int_{\Omega_t}h(w')G_t(w',w)dA(w'),\\
(\mathfrak{G}_t^{(m)}h)(w)\ &:=\ \int_{\Omega_t}h(w')G^{(m)}_t(w',w)dA(w').
\end{align*}
In this notation, we can rewrite the equation for the derivative~$\partial_t G^{(m)}_t(w,z)$ obtained above as
\[
(\mathrm{Id}+m^2\mathfrak{G}_t)(\partial_t G^{(m)}_t(\cdot,z))\ =\ -2\pi P_t(w)P^{(m)}_t(z).
\]
The resolvent identity (see~\eqref{eq:Gm=G-GGm}) {reads as $\mathfrak{G}^{(m)}_t\!=\mathfrak{G}_t-m^2\mathfrak{G}^{(m)}_t\mathfrak{G}_t$, provided that the integrals under consideration converge and that the Fubini theorem can be applied. Therefore,}
\[
(\mathrm{Id}-m^2\mathfrak{G}^{(m)}_t)(\mathrm{Id}+m^2\mathfrak{G}_t)h=h,\qquad h=\partial_t G^{(m)}_t(\cdot,z),
\]
{where the use of the Fubini theorem can be justified via the estimates~\eqref{eq:x-paGm<paG<PP} and~\eqref{eq:PGbound}.} Therefore, for almost all~$t$ (and, given~$t$, for almost all $w,z$), one has
\[
\partial_t G^{(m)}_t(w,z)\ =\ -2\pi\big[(\mathrm{Id}-m^2\mathfrak{G}^{(m)}_t)P_t\big](w) P^{(m)}_t(z)\ =\ -2\pi P^{(m)}_t(w)P^{(m)}_t(z),
\]
{which is nothing but the identity~\eqref{eq:mHadamard}. As already mentioned above, the similar claim for \emph{all}~$t,w,z$ follows from the continuity of the massive Green function and massive Poisson kernels in all the variables $t,w,z$.}
\end{proof}

\subsection{Driving term of mSLE(2)}\label{sect:computations}
With the above estimates and Hadamard's formula, we are now prepared to compute the driving term of massive SLE$(2)$ such that the normalized massive Poisson kernel is a martingale.
Recall that {under each probability measure $\mathbb{P}^{(m)}_{(\Omega;a,b)}$ (obtained as a subsequential weak limit of mLERWs)} we have
\[
\mathrm{d}\xi_t\ =\ \sqrt{2}(\mathrm{d}B_t+\mathrm{d}\langle B,L^{(m)}\rangle_t)\ {=\,\sqrt{2}\mathrm{d}B_t + 2\lambda_t\mathrm{d}t\,,}
\]
where $B_t$ is a standard Brownian motion and the process~$L^{(m)}_t$ comes from the Girsanov theorem as explained in Section~\ref{sect:abs-cont}. {Our goal is to identify the drift term $2\lambda_t\mathrm{d}t$; note that we will do this using the martingale property of the processes $t\mapsto M^{(m)}_t(z)$, $z\in\Omega$, and \emph{not} through identifying the process~$L^{(m)}_t$; cf. Remark~\ref{rem:Nm-not-mart} and Remark~\ref{rem:Nm-vs-Nb}. More precisely, following the strategy indicated in~\cite{Makarov-Smirnov} we
\begin{itemize}
\item analyze the random processes $t\mapsto P^{(m)}_t(z)$, $z\in B_\Omega(b,\tfrac{1}{2})$, relying upon the convolution formula~\eqref{eq:Pmdef}, the massive Hadarmard formula (Lemma~\ref{lem:mHadamard}), and a version of the stochastic Fubini theorem (see Lemma~\ref{lem:fubini-ok} below);
\item use the martingale property of the process $t\mapsto M^{(m)}_t(z)=P^{(m)}_t(z)N^{(m)}_t$ for each $z\in B_\Omega(b,\frac{1}{2}r)$ in order \begin{itemize}
    \item to analyze the process $t\mapsto N^{(m)}_t$ (this is done in Lemma~\ref{lem:dNmt=}(i)) and
    \item to identify the drift term $2\lambda_t\mathrm{d}t$ of the process $\xi_t$ (see Lemma~\ref{lem:dNmt=}(ii)).
    \end{itemize}
\end{itemize}

Recall that we use the notation $\mathrm{d}$ in the stochastic calculus/SDE context and the notation $dA$ for the Lebesque measure in $\Omega$, over which we often integrate in the following computations.} For each~$w\in\Omega$, the process~${t\mapsto}\,g_t(\phi_\Omega(w))$ satisfies the Loewner equation~\eqref{eq:Loewner}, thus one has
\begin{equation}\label{eq:dP=}
\mathrm{d}P_t(w)\ =\ 
-\frac{1}{\pi}\Im\biggl[\frac{\mathrm{d}\xi_t}{(g_t(\phi_\Omega(w))-\xi_t)^2}+ \frac{\mathrm{d}\langle\xi,\xi\rangle_t-2\mathrm{d}t}{(g_t(\phi_\Omega(w))-\xi_t)^3}\biggr]\ =\ Q_t(w)\mathrm{d}\xi_t\,.
\end{equation}

We want to substitute this expression (together with the massive Hadamard formula~\eqref{eq:mHadamard}) into the definition~\eqref{eq:Pmdef} of the massive Poisson kernel. The following lemma handles the question of interchanging the stochastic integration over the continuous semi-martingale~$\xi_t$ with the Lebesgue integration over~$w\in\Omega$.
\begin{lemma}\label{lem:fubini-ok} The process~$\int_\Omega Q_t(w)G_t^{(m)}(w,z)dA(w)$ is a local semi-martingale. Moreover, almost surely, for all~$T>0$ the following identity is fulfilled:
\[
\int_\Omega \biggl[\,\int_0^T Q_t(w)G_t^{(m)}(w,z)\mathrm{d}\xi_t\biggr] dA(w) \ =\ \int_0^T\biggl[\,\int_\Omega Q_t(w)G_t^{(m)}(w,z)dA(w)\biggr]\mathrm{d}\xi_t.
\]
\end{lemma}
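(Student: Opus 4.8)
The plan is to justify a stochastic Fubini theorem for the integral $\int_\Omega Q_t(w)G_t^{(m)}(w,z)\mathrm{d}\xi_t\,dA(w)$, where the stochastic integration is over the continuous semi-martingale $\xi_t$ with decomposition $\mathrm{d}\xi_t = \sqrt 2\,\mathrm dB_t + 2\lambda_t\,\mathrm dt$ (which is already known to be a semi-martingale under $\mathbb P^{(m)}_{(\Omega;a,b)}$ by Section~\ref{sect:abs-cont}, with $\langle L^{(m)},L^{(m)}\rangle_\infty<\infty$ a.s.). The standard route is: (1) check that the integrand $(t,w)\mapsto Q_t(w)G_t^{(m)}(w,z)$ is jointly measurable and adapted in $t$ for each fixed $w$; (2) produce an $L^1(dA)$-valued (or pathwise-integrable) domination of the integrand that controls \emph{both} the martingale part (via an $L^2(dt\times dA)$ bound so that Jensen/Cauchy--Schwarz applies to $\int_0^T(\cdot)^2\,dt$) and the finite-variation part (via an $L^1(dt\times dA)$ bound against $|\mathrm d\langle L^{(m)}\rangle_t|^{1/2}$ and $|\lambda_t|\,dt$); and (3) invoke a version of the stochastic Fubini theorem (e.g.\ in the form of Protter, \emph{Stochastic Integration and Differential Equations}, Ch.~IV, or Veraar's ``The stochastic Fubini theorem revisited''), which then yields the claimed identity simultaneously for all $T$ by continuity of both sides in $T$.

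The key estimates to carry out, in order, are the following. First, the pointwise bound on the kernel: combining the a~priori estimate \eqref{eq:QGbound}, namely $|Q_t(w)|G_t(w,z)\le CP_t(z)P_t(w)+|Q_t(z)|G_t(w,z)+C|Q_t(z)|$, with $G_t^{(m)}(w,z)\le G_t(w,z)$ from \eqref{eq:Gbound}, gives a majorant of the integrand that is integrable in $w\in\Omega$ for fixed $t$ \emph{provided} $\int_{\Omega_t}P_t(w)\,dA(w)<\infty$ and $\int_{\Omega_t}G_t(w,z)\,dA(w)<\infty$; the latter is automatic from $G_t(w,z)\le\frac1{2\pi}\log\frac{2R}{|w-z|}$, and the former holds for almost all $t$ by Corollary~\ref{cor:intPconv}(i). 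Second, for the $L^2(dt)$ control needed for the Brownian part I would use the stronger input: $\bigl(\int_{\Omega_t}|Q_t(w)|G_t^{(m)}(w,z)\,dA(w)\bigr)^2$ is dominated, after Cauchy--Schwarz in $w$ and using the bounds above, by a constant times $\bigl(\int_{\Omega_t}P_t(w)\,dA(w)\bigr)^2$ plus lower-order terms involving $\int_{\Omega_t}(P_t(w))^2\,dA(w)$; integrating in $t$ and invoking Corollary~\ref{cor:intPconv}(i) \emph{and} (ii) (recall $\gamma$ is absolutely continuous w.r.t.\ SLE(2), so the a.s.\ finiteness in (ii) transfers) shows $\int_0^\infty\bigl(\int_\Omega|Q_t(w)|G_t^{(m)}(w,z)\,dA(w)\bigr)^2\,dt<\infty$ a.s., which is exactly what is required to make the right-hand side a well-defined stochastic integral against $\sqrt2\,\mathrm dB_t$. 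Third, for the drift part $2\lambda_t\,\mathrm dt$ one needs $\int_0^T\int_\Omega|Q_t(w)|G_t^{(m)}(w,z)|\lambda_t|\,dA(w)\,dt<\infty$ a.s.; since $\lambda_t$ is locally bounded on $[0,T]$ by $\langle L^{(m)}\rangle$ being locally finite (one works up to a localizing stopping time $\tau_n$, which is why the statement asserts only that the process is a \emph{local} semi-martingale), this again reduces to the same $dt\times dA$ integrability already established.

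Having set up these dominations, the stochastic Fubini theorem applies on each random interval $[0,T\wedge\tau_n]$ and gives the identity there; letting $n\to\infty$ and using that both sides are continuous in $T$ extends it to all $T>0$ simultaneously, almost surely. The final bookkeeping is that the resulting object $\int_\Omega Q_t(w)G_t^{(m)}(w,z)\,dA(w)$, being an a.s.\ finite integral whose stochastic-integral form on the right is a continuous local martingale part plus a locally finite-variation part, is itself a continuous local semi-martingale — which is the first assertion of the lemma.

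I expect the main obstacle to be the second estimate, i.e.\ obtaining the $L^2(dt)$ (not merely $L^1(dt)$) control of $\int_\Omega|Q_t(w)|G_t^{(m)}(w,z)\,dA(w)$ needed to make sense of the stochastic integral against $\mathrm dB_t$ on the right-hand side. The $L^1$-in-$w$ bounds from \eqref{eq:QGbound}–\eqref{eq:Gbound} are comfortably at hand, but squaring and integrating in $t$ forces one to control a quantity like $\bigl(\int_{\Omega_t}P_t(w)\,dA(w)\bigr)^2$, and it is precisely Corollary~\ref{cor:intPconv}(i) — combined with the $\kappa\le 4$ (here $\kappa=2$) input of Corollary~\ref{cor:intPconv}(ii) and the absolute continuity of mLERW limits w.r.t.\ SLE(2) — that makes this work; care is needed because these finiteness statements hold only for a.e.\ $t$ and only almost surely, so one must thread the argument through localizing stopping times and a null-set exception in $t$, which is why the lemma is (correctly) phrased for local semi-martingales and ``almost surely, for all $T$''.
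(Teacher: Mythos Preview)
Your overall strategy --- invoke a stochastic Fubini theorem (Veraar's) after dominating the integrand via \eqref{eq:QGbound} and controlling the result through Corollary~\ref{cor:intPconv} --- is exactly the paper's. Two steps in your execution, however, do not go through as written.

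For the martingale part, the quantity you bound is
\[
\int_0^T\Bigl(\int_\Omega |Q_t(w)|\,G_t^{(m)}(w,z)\,dA(w)\Bigr)^{2} dt,
\]
which is indeed what makes the right-hand side a well-defined It\^o integral. But the hypothesis of Veraar's theorem (and of Protter's) is the transposed condition
\[
\int_\Omega\Bigl[\int_0^T \big|Q_t(w)\,G_t^{(m)}(w,z)\big|^{2}\,dt\Bigr]^{1/2}\! dA(w)\ <\ \infty,
\]
and the former does not imply the latter. The paper obtains this directly: square the pointwise bound \eqref{eq:QGbound} \emph{before} integrating, giving $|Q_t(w)G_t^{(m)}(w,z)|^2\le C(z)\bigl(P_t(w)^2+G_t(w,z)^2+1\bigr)$ with a random constant $C(z)=3C^2\max_{t\le T}\{P_t(z)^2+|Q_t(z)|^2\}$; then $\int_\Omega[\int_0^T P_t(w)^2\,dt]^{1/2}dA(w)<\infty$ follows from Corollary~\ref{cor:intPconv}(ii) and Cauchy--Schwarz in~$w$, while the $G_t(w,z)^2$ and constant terms are harmless.

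For the finite-variation part, the assertion that ``$\lambda_t$ is locally bounded on $[0,T]$ by $\langle L^{(m)}\rangle$ being locally finite'' is not correct: $\langle L^{(m)},L^{(m)}\rangle_T<\infty$ only yields $\int_0^T\lambda_t^2\,dt<\infty$, not a pointwise bound on~$\lambda_t$, and localizing does not change this. The paper avoids the issue by applying the Kunita--Watanabe inequality to $\int_0^T|Q_t(w)G_t^{(m)}(w,z)|\,|\mathrm{d}\langle B,L^{(m)}\rangle_t|$ directly, which reduces the bounded-variation estimate to the martingale-part estimate already in hand times $\langle L^{(m)},L^{(m)}\rangle_T^{1/2}$. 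With these two corrections your outline becomes the paper's proof.
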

\begin{proof} We use a version of the stochastic Fubini theorem given in~\cite{veraar}. In order to apply this result, one needs to check that the following two conditions hold almost surely {(recall that $\mathrm{d}\xi_t=\sqrt{2}(\mathrm{d}B_t+\mathrm{d}\langle B,L^{(m)}\rangle_t)$; in particular, $\mathrm{d}\langle\xi,\xi\rangle_t=2\mathrm{d}t$):}
\begin{align}
\label{x:mart-part}\int_\Omega \biggl[\int_0^T \big|Q_t(w)G_t^{(m)}(w,z)\big|^2{\mathrm{d}t}
& \biggr]^{1/2}\!dA(w)<+\infty,\\
\label{x:bv-part}\int_\Omega \biggl[\int_0^T \big|Q_t(w)G_t^{(m)}(w,z){\mathrm{d}\langle B,L^{(m)}\rangle_t\big|} & \biggr] dA(w)<+\infty.
\end{align}
The first estimate~\eqref{x:mart-part} can be easily derived from Corollary~\ref{cor:intPconv}(ii) (and {from} the absolute continuity of mSLE(2) with respect to SLE(2) discussed in Section~\ref{sect:abs-cont}) since the uniform bound~\eqref{eq:QGbound} implies
\begin{align*}
|Q_t(w)G_t^{(m)}(w,z)|^2\ &\le\ (CP_t(z)P_t(w)+|Q_t(z)|G_t(w,z)+C|Q_t(z)|)^2\\
& \le\ C(z)(P_t(w)^2+G_t(w,z)^2+1),
\end{align*}
where~$C(z):= 3C^2\max_{t\in [0,T]}\{(P_t(z))^2+|Q_t(z)|^2\}<+\infty$ almost surely. In its turn, the second estimate~\eqref{x:bv-part} follows from~\eqref{x:mart-part} and the Kunita--Watanabe inequality {(see \cite[Proposition~4.5]{legall-book})} as~$\langle L^{(m)},L^{(m)}\rangle_T<+\infty$ almost surely; see~\eqref{eq:LL<infty}.
\end{proof}
Using~\eqref{eq:dP=}, the massive Hadamard formula~\eqref{eq:mHadamard} and Lemma~\ref{lem:fubini-ok}, we conclude that, for each~$z\in\Omega$, the random process~$P^{(m)}(z)$ is a local semi-martingale and
\begin{align}
\notag & \mathrm{d}P^{(m)}_t(z)\ =\ \mathrm{d}P_t(z)-m^2\int_{\Omega_t}\left(G^{(m)}_t(w,z)\mathrm{d}P_t(w)+P_t(w)\mathrm{d} G^{(m)}_t(w,z)\right)dA(w)\\
\notag &=\ Q_t(z)\mathrm{d}\xi_t-m^2\int_{\Omega_t}\left(Q_t(w)G^{(m)}_t(w,z)\mathrm{d}\xi_t-2\pi P_t(w)P^{(m)}_t(w)P^{(m)}_t(z)\mathrm{d}t\right)dA(w)\\
&=\ Q^{(m)}_t(z)\mathrm{d}\xi_t + 2\pi m^2 P^{(m)}_t(z)\left[\int_{\Omega_t}P_t(w)P^{(m)}_t(w)dA(w)\right]\mathrm{d}t.
\label{eq:dPm=}
\end{align}

\smallskip

{We now move to the key part of the computation. Recall that
\[
N^{(m)}_t\ =\ P_t(b)/P^{(m)}_t(b)\ =\ M^{(m)}_t(z)/P^{(m)}_t(z),\quad z\in\Omega_t
\]
and note that the process $N^{(m)}_t$ is a semi-martingale since $M^{(m)}_t(z)$ is a martingale and $P^{(m)}_t(z)$ is a (strictly positive) semi-martingale.}

\begin{lemma}\label{lem:dNmt=} {(i) The positive semi-martingale $N^{(m)}_t$ satisfies the following SDE:
\begin{equation}
\label{eq:dN=}
\mathrm{d}N^{(m)}_t\ =\ -N^{(m)}_t\biggl[\frac{Q^{(m)}_t(b)}{P^{(m)}_t(b)}\sqrt{2}\mathrm{d}B_t + 2\pi m^2\left[\int_{\Omega_t}P_t(w)P^{(m)}_t(w)dA(w)\right]\mathrm{d}t\biggr].
\end{equation}
\noindent (ii) The following identity for the drift term of the driving process $\xi_t$ holds:}
\begin{equation}
\label{eq:dlambda=}
2\lambda_t\mathrm{d}t\ =\ -
\frac{\mathrm{d}\langle\xi,N^{(m)}\rangle_t}{N_t^{(m)}}\ =\ 2\frac{Q^{(m)}_t(b)}{P^{(m)}_t(b)}\mathrm{d}t.
\end{equation}
\end{lemma}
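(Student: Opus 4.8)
The plan is to derive both statements from two facts established earlier: the expansion~\eqref{eq:dPm=} for the semi-martingale $t\mapsto P^{(m)}_t(z)$, and the martingale property of $t\mapsto M^{(m)}_t(z)=P^{(m)}_t(z)N^{(m)}_t$ for every fixed $z\in B_\Omega(b,\tfrac12 r)$ (Remark~\ref{rem:Pm-mart}, via Proposition~\ref{prop:Mconv}). The strategy is to first write an \emph{ansatz} for $\mathrm{d}N^{(m)}_t$ with unknown coefficients, impose that the product $P^{(m)}_t(z)N^{(m)}_t$ has vanishing drift, read off the coefficients, and then recognize them in terms of $Q^{(m)}_t(b)/P^{(m)}_t(b)$ and the integral functional appearing in~\eqref{eq:dPm=}.

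\textbf{Step 1: an ansatz for $N^{(m)}_t$ and the It\^o product.} Since $N^{(m)}_t$ is a strictly positive semi-martingale (it is $M^{(m)}_t(z)/P^{(m)}_t(z)$, a ratio of a martingale and a positive semi-martingale), write $\mathrm{d}N^{(m)}_t=-N^{(m)}_t(\nu_t\,\mathrm{d}\xi_t+\mu_t\,\mathrm{d}t)$ for adapted processes $\nu_t,\mu_t$ to be determined (this form is legitimate because $N^{(m)}_t>0$). Using~\eqref{eq:dPm=}, namely
\[
\mathrm{d}P^{(m)}_t(z)=Q^{(m)}_t(z)\,\mathrm{d}\xi_t+2\pi m^2 P^{(m)}_t(z)\Bigl[\int_{\Omega_t}P_t(w)P^{(m)}_t(w)\,dA(w)\Bigr]\mathrm{d}t,
\]
compute $\mathrm{d}\bigl(P^{(m)}_t(z)N^{(m)}_t\bigr)$ by It\^o's product rule, recalling $\mathrm{d}\langle\xi,\xi\rangle_t=2\,\mathrm{d}t$ and $\mathrm{d}\xi_t=\sqrt2\,\mathrm{d}B_t+2\lambda_t\,\mathrm{d}t$. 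The cross-variation term is $-2N^{(m)}_t\nu_t Q^{(m)}_t(z)\,\mathrm{d}t$; the remaining drift contributions are $N^{(m)}_t$ times the drift of $P^{(m)}_t(z)$ plus $-P^{(m)}_t(z)N^{(m)}_t\mu_t$ plus the drift coming from $Q^{(m)}_t(z)\,\mathrm{d}\xi_t$ paired with $2\lambda_t\,\mathrm{d}t$.

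\textbf{Step 2: kill the drift for every $z$.} Setting the total drift of $P^{(m)}_t(z)N^{(m)}_t$ to zero for all $z\in B_\Omega(b,\tfrac12 r)$ gives, after dividing by $N^{(m)}_t$,
\[
2\lambda_t Q^{(m)}_t(z)-2\nu_t Q^{(m)}_t(z)+2\pi m^2 P^{(m)}_t(z)\!\int_{\Omega_t}\!P_t(w)P^{(m)}_t(w)\,dA(w)-\mu_t P^{(m)}_t(z)=0.
\]
Both $z\mapsto Q^{(m)}_t(z)$ and $z\mapsto P^{(m)}_t(z)$ are (up to normalization) boundary-vanishing functions whose ratio at $b$ is $Q^{(m)}_t(b)/P^{(m)}_t(b)$ in the sense of~\eqref{eq:h1(b)/h2(b)def}; letting $z\to b$ and using $Q^{(m)}_t(b)/P^{(m)}_t(b)$ finite, together with varying $z$ away from $b$ so that $Q^{(m)}_t(z)$ and $P^{(m)}_t(z)$ are not everywhere proportional, forces the two identifications
\[
\mu_t=2\pi m^2\!\int_{\Omega_t}\!P_t(w)P^{(m)}_t(w)\,dA(w),\qquad
\nu_t-\lambda_t=\frac{Q^{(m)}_t(b)}{P^{(m)}_t(b)}.
\]

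\textbf{Step 3: pin down $\nu_t$ and $\lambda_t$ separately and conclude.} The martingale property at a \emph{single} generic $z$ only determines the combination $\nu_t-\lambda_t$; to split it I would argue as in the massless case treated in Section~\ref{sect:conv_m=0}: because $P^{(m)}_t(z)N^{(m)}_t$ is a true martingale for \emph{every} $z$, and the finite-variation/martingale decomposition of each $P^{(m)}_t(z)$ in~\eqref{eq:dPm=} has a drift coefficient that is a fixed ($z$-independent up to the factor $P^{(m)}_t(z)$) process, the same L\'evy-type rigidity that pinned $\langle\xi,\xi\rangle_t=2t$ now identifies the martingale part $\sqrt2\,\mathrm{d}B_t$ of $\xi_t$ with the one driving $B_t$; equivalently, since $N^{(m)}_t=\exp$ of a semi-martingale with martingale part that, by~\eqref{x:Ldef}--\eqref{eq:Xi=}, must equal $\sqrt2\lambda_t\,\mathrm{d}B_t$ (cf.\ Remark~\ref{rem:Nm-not-mart}), one gets $\nu_t\sqrt2\,\mathrm{d}B_t=$ the $\mathrm{d}B_t$-part on the nose once the $\mathrm{d}t$-parts are matched—hence $\nu_t\,\mathrm{d}\xi_t$ contributes $\sqrt2\,\nu_t\,\mathrm{d}B_t$ and comparing with the Girsanov drift gives $\lambda_t=0$ would be wrong; rather, the clean route is: the cross-variation $\mathrm{d}\langle\xi,N^{(m)}\rangle_t=-\sqrt2\,\nu_t\cdot\sqrt2\,N^{(m)}_t\,\mathrm{d}t=-2\nu_t N^{(m)}_t\,\mathrm{d}t$, and the Girsanov relation~\eqref{eq:Xi=} together with $D^{(m)}_t=\mathbb{E}[D^{(m)}_{(\Omega;a,b)}\mid\mathcal F_t]$ and $N^{(m)}_t\neq (D^{(m)}_t)^{-1}$ forces instead the direct identity $2\lambda_t\,\mathrm{d}t=-\mathrm{d}\langle\xi,N^{(m)}\rangle_t/N^{(m)}_t$ once one checks (from the single-$z$ relation above with $z\to b$, where $P^{(m)}_t(z)\to0$ but $Q^{(m)}_t(z)/P^{(m)}_t(z)\to Q^{(m)}_t(b)/P^{(m)}_t(b)$) that the $\mathrm{d}B_t$-coefficient of $\xi_t$ and of $N^{(m)}_t$ are proportional with ratio exactly $-\sqrt2\,Q^{(m)}_t(b)/P^{(m)}_t(b)$, giving $\nu_t=Q^{(m)}_t(b)/P^{(m)}_t(b)$ and $\lambda_t=Q^{(m)}_t(b)/P^{(m)}_t(b)$ after substitution. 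This yields~\eqref{eq:dN=} with the coefficients $\nu_t,\mu_t$ just found, and~\eqref{eq:dlambda=} is then immediate from $\mathrm{d}\langle\xi,N^{(m)}\rangle_t=-2N^{(m)}_t\nu_t\,\mathrm{d}t=-2N^{(m)}_t\frac{Q^{(m)}_t(b)}{P^{(m)}_t(b)}\,\mathrm{d}t$.

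\textbf{Main obstacle.} The delicate point is the passage to the limit $z\to b$ inside the drift-cancellation identity of Step 2 — one must justify that $P^{(m)}_t(z)N^{(m)}_t$ is a martingale \emph{up to and including} the boundary normalization, i.e.\ that $M^{(m)}_t(b)=N^{(m)}_t$ (consistent with $P^{(m)}_t(b)/P_t(b)=(N^{(m)}_t)^{-1}$ from~\eqref{eq:Pm/P(b)def} and~\eqref{eq:defNm}), and that the ratios $Q^{(m)}_t(z)/P^{(m)}_t(z)$ converge to $Q^{(m)}_t(b)/P^{(m)}_t(b)$ in the sense~\eqref{eq:h1(b)/h2(b)def} \emph{with enough uniformity} that the It\^o drift computed at $z$ passes to the limit. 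This rests on the boundary-behavior results of Section~\ref{sect:boundarybehavior} (Lemma~\ref{lemma:-+}, Corollary~\ref{cor:near-bdry=1}) and the a priori estimates~\eqref{eq:PQGbound}--\eqref{eq:QGbound}; separating $\nu_t$ from $\lambda_t$ (rather than only their difference) is the second subtlety, handled by invoking the Girsanov structure~\eqref{x:Ldef}--\eqref{eq:Xi=} and Remark~\ref{rem:Nm-not-mart}, exactly as the single-observable argument of Section~\ref{sect:conv_m=0} is upgraded to identify $\xi_t$.
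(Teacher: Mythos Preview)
Your Steps~1--2 begin along the right track but contain an error and, more importantly, a genuine circularity that Step~3 does not resolve.

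\textbf{The error in Step~2.} With the ansatz $\mathrm{d}N^{(m)}_t=-N^{(m)}_t(\nu_t\,\mathrm{d}\xi_t+\mu_t\,\mathrm{d}t)$ and $\mathrm{d}\xi_t=\sqrt{2}\,\mathrm{d}B_t+2\lambda_t\,\mathrm{d}t$, your drift equation is missing the term $-2\lambda_t\nu_t P^{(m)}_t(z)$ coming from the drift part of $\nu_t\,\mathrm{d}\xi_t$. More seriously, whatever the $P^{(m)}_t(z)$-coefficient is, linear independence of $P^{(m)}_t(\cdot)$ and $Q^{(m)}_t(\cdot)$ forces the $Q^{(m)}_t(z)$-coefficient to vanish, i.e.\ $\nu_t=\lambda_t$, \emph{not} $\nu_t-\lambda_t=Q^{(m)}_t(b)/P^{(m)}_t(b)$. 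There is no way for the quantity $Q^{(m)}_t(b)/P^{(m)}_t(b)$ to appear in this linear-independence step; taking $z\to b$ gives you nothing beyond what linear independence already gave.

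\textbf{The circularity.} From the martingale property of $M^{(m)}_t(z)$ and~\eqref{eq:dPm=} alone you can only deduce (after the paper's cleaner grouping~\eqref{x:dMm1=}--\eqref{x:dMm2=}) that the drift of $N^{(m)}_t$ equals $-2\pi m^2 N^{(m)}_t I_t\,\mathrm{d}t$ and that $2\lambda_t\,\mathrm{d}t=-\mathrm{d}\langle\xi,N^{(m)}\rangle_t/N^{(m)}_t$. This links $\lambda_t$ to the (still unknown) martingale part of $N^{(m)}_t$, but does not determine either one. Your Step~3 attempts to break this via~\eqref{x:Ldef}--\eqref{eq:Xi=} do not work: those identities concern $D^{(m)}_t$, and you yourself note $N^{(m)}_t\neq (D^{(m)}_t)^{-1}$ (Remark~\ref{rem:Nm-not-mart}), so there is no a~priori reason the $\mathrm{d}B_t$-coefficient of $\log N^{(m)}_t$ equals $\sqrt{2}\lambda_t$.

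\textbf{What the paper does instead.} The missing input is an \emph{independent} computation of the martingale part of $N^{(m)}_t$. The paper obtains it by differentiating the explicit formula
\[
\frac{1}{N^{(m)}_t}\ =\ 1-m^2\int_{\Omega_t}P_t(w)\,\frac{G^{(m)}_t(w,b)}{P_t(b)}\,dA(w)
\]
(see~\eqref{eq:Pm/P(b)def}): using $\mathrm{d}P_t(w)=Q_t(w)\,\mathrm{d}\xi_t$, the massive Hadamard formula for $G^{(m)}_t(w,b)/P_t(b)$, and the stochastic Fubini argument of Lemma~\ref{lem:fubini-ok}, one reads off the $\mathrm{d}\xi_t$-coefficient of $\mathrm{d}(1/N^{(m)}_t)$ as $-m^2\!\int_{\Omega_t}Q_t(w)\tfrac{G^{(m)}_t(w,b)}{P_t(b)}dA(w)=Q^{(m)}_t(b)/P_t(b)$, which by~\eqref{eq:Q(b)/P(b)=0} and~\eqref{eq:Qmdef} yields the martingale part claimed in~\eqref{eq:dN=}. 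Only after this does $2\lambda_t=-\mathrm{d}\langle\xi,N^{(m)}\rangle_t/(N^{(m)}_t\,\mathrm{d}t)$ become~\eqref{eq:dlambda=}. You should add this direct computation; without it the argument is incomplete.
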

\begin{proof} {(i) Applying the It\^o lemma to the product $M^{(m)}_t(z)=P^{(m)}_t(z)N^{(m)}_t$ and using~\eqref{eq:dPm=}, one obtains}
\begin{align}
\notag \mathrm{d}M^{(m)}_t(z)\ &=\ P^{(m)}_t(z)\mathrm{d}N^{(m)}_t+N_t^{(m)}\mathrm{d}P^{(m)}_t(z)+\mathrm{d}\langle P^{(m)}(z),N^{(m)}\rangle_t\\
\label{x:dMm1=} & =\ P^{(m)}_t(z)\biggl[\mathrm{d}N^{(m)}_t+2\pi m^2N_t^{(m)}\!\!\left[\int_{\Omega_t}P_t(w)P^{(m)}_t(w)dA(w)\right]\mathrm{d}t\biggr]\\
\label{x:dMm2=} &\,+\ Q^{(m)}_t(z)\bigl[N^{(m)}_t\mathrm{d}\xi_t+\mathrm{d}\langle\xi,N^{(m)}\rangle_t\bigr].
\end{align}

Recall (see Remark~\ref{rem:Pm-mart}) that the process~$\mathrm{d}M^{(m)}_{t\wedge\tau_r}(z)$ should be a martingale \emph{for each $z\in B_\Omega(b,\frac{1}{2}r)$} and it is obvious that the functions $P^{(m)}_{t\wedge\tau_r}(\cdot),\ Q^{(m)}_{t\wedge\tau_r}(\cdot)$ are linearly independent. Thus, the only possibility is that
\[
\text{both terms~\eqref{x:dMm1=} and~\eqref{x:dMm2=} are local martingales}
\]
(until the stopping time~$\tau_r$ which almost surely grows to infinity as $r\to 0$). The bounded variation (drift) part of~$N^{(m)}_t$ can be easily identified from~\eqref{x:dMm1=}. To identify the martingale part, recall (see~\eqref{eq:defNm} and~\eqref{eq:Pm/P(b)def}) that
\[
N^{(m)}_t\ =\ \frac{P_t(b)}{P^{(m)}_t(b)}\ =\ \biggl[1-m^2\int_{\Omega_t}P_t(w)\frac{G^{(m)}_t(w,b)}{P_t(b)}dA(w)\biggr]^{-1},
\]
where, as usual, we use the shorthand notation
\[
\frac{G^{(m)}_t(w,b)}{P_t(b)}\ :=\ \lim_{w\to b}\frac{G^{(m)}_t(w,z)}{P_t(z)}.
\]
As~$Q_t(b)/P_t(b)=0$ (see~\eqref{eq:Q(b)/P(b)=0}), the massive Hadamard formula~\eqref{eq:mHadamard} gives
\[
\mathrm{d}\frac{G^{(m)}_t(w,b)}{P_t(b)}\ =\ -X_t(w)\mathrm{d}t,\quad \text{where}\quad X_t(w):=2\pi P^{(m)}_t(w)(N^{(m)}_t)^{-1}\le 2\pi P_t(w).
\]
Therefore,
\[
\mathrm{d}\biggl[P_t(w)\frac{G_t^{(m)}(w,b)}{P_t(b)}\biggr]\ =\ Q_t(w)\frac{G^{(m)}_t(w,b)}{P_t(b)}\mathrm{d}\xi_t-P_t(w)X_t(w)\mathrm{d}t.
\]
It follows from~\eqref{eq:QGbound} that~$|Q_t(w)|{G^{(m)}_t(w,b)}/{P_t(b)}\le  CP_t(w)$, thus one can apply the stochastic Fubini theorem
as in the proof of Lemma~\ref{lem:fubini-ok} and conclude that
\[
\mathrm{d}\frac{1}{N^{(m)}_t}=-m^2\left[\int_{\Omega_t}Q_t(w)\frac{G^{(m)}_t(w,b)}{P_t(b)}dA(w)\right] \mathrm{d}\xi_t+m^2\left[\int_{\Omega_t}P_t(w)X_t(w)dA(w)\right]\mathrm{d}t.
\]
{Since, due to~\eqref{eq:Qmdef} and~\eqref{eq:Q(b)/P(b)=0}, we have}
\[
-m^2\int_{\Omega_t}Q_t(w)\frac{G^{(m)}_t(w,b)}{P_t(b)}dA(w)\ =\ 
\frac{Q^{(m)}_t(b)}{P_t(b)}\ =\ \frac{Q^{(m)}_t(b)}{P^{(m)}_t(b)}(N^{(m)}_t)^{-1},
\]
{the martingale part of the process~$N_t^{(m)}$ coincides with that in the formula~\eqref{eq:dN=}. Recall that the bounded variation part of $N_t^{(m)}$ is already identified by~\eqref{x:dMm1=}.}

\smallskip

\noindent {(ii) Recall that we know that the martingale part of the process~$\xi_t$ is given by~$\sqrt{2}B_t$. Therefore, we can use the fact that~\eqref{x:dMm2=} is a local martingale together with the identification of the (martingale part of the) process $N^{(m)}_t$ made above in order to identify the drift $\lambda_t\mathrm{d}t$ of the process~$\xi_t$. This gives the required formula~\eqref{eq:dlambda=}.}
\end{proof}

\subsection{Proof of Theorem~\ref{theorem}} \label{sub:prfThm11}
For convenience of the reader, we now {briefly} summarize the proof of Theorem~\ref{theorem}, {which consists of the following two parts:}
\begin{enumerate}\renewcommand\theenumi{\roman{enumi}}
\item The results of Section~\ref{sect:density} imply that the Radon--Nikodym derivatives of massive LERW measures~$\mathbb P^{(m)}_{(\Od;a^\delta,b^\delta)}$ with respect to the classical ($m=0$) ones are uniformly bounded. Therefore, the discussion of tightness given in Section~\ref{sect:topologies} also applies to these measures. {Moreover,} as argued in Section~\ref{sect:density}, each subsequential limit~$\mathbb P^{\mathbb (m)}_{(\Omega;a,b)}$ of those is necessarily absolutely continuous with respect to the classical SLE(2) measure~$\mathbb P_{(\Omega;a,b)}$. This justifies the application of the Girsanov theorem {and implies that the driving term $\xi_t$ of the Loewner evolution~\eqref{eq:Loewner} is a semi-martingale}
    \[
    \mathrm{d}\xi_t=\sqrt{2}\mathrm{d}{B}_t+2\lambda_t\mathrm{d}t\quad \text{under}\ \ \mathbb{P}^{(m)}_{(\Omega;a,b)}.
    \]

\item Due to Remark~\ref{rem:Pm-mart}, the scaling limits of martingale observables~\eqref{eq:defMm} (provided by Proposition~\ref{prop:Mconv}, {which is} the main result of Section~\ref{sect:convergence}) are martingales under~$\mathbb P^{\mathbb (m)}_{(\Omega;a,b)}$. {As shown in Lemma \ref{lem:dNmt=},} this property is sufficient to identify the {drift term~$2\lambda_t\mathrm{d}t$} via brute force computations indicated in~\cite{Makarov-Smirnov} and a priori estimates from Sections~\ref{sect:estimates} and~\ref{sect:Hadamard}.
\end{enumerate}

\begin{remark} \label{rem:int-lambda2}
As mentioned in~\cite{Makarov-Smirnov}, the a priori (weak) uniqueness of a solution to the SDE $\mathrm{d}\xi_t = \sqrt{2}\mathrm{d}B_t+2\lambda_t\mathrm{d}t$ with~$\lambda_t:=Q^{(m)}_t(b)/P^{(m)}_t(b)$ follows from the fact that~$\int_0^{+\infty}\lambda_t^2dt\le\const(m,R)<\infty$ almost surely (which clearly implies the standard Novikov condition $\mathbb{E}[\exp(\frac{1}{2}\int_0^T \lambda_t^2 dt)]< \infty$ for all~$T>0$). Indeed,
\[
\lambda_t\ =\ -m^2\left[\int_{\Omega_t}Q_t(w)\frac{G^{(m)}_t(w,b)}{P_t(b)}dA(w)\right] N^{(m)}_t,
\]
the factor~$N^{(m)}_t=P_t(b)/P^{(m)}_t(b)$ is uniformly bounded due to~\eqref{eq:Pm/P>c} and hence
\[
\int_0^{+\infty}|\lambda_t|^2dt\ \le\ \const(m,R)\cdot\int_0^{+\infty}\biggl[\int_{\Omega_t}P_t(w)dA(w)\biggr]^2dt\ \le\ \const(m,R)\ <\ \infty
\]
due to the uniform estimate~\eqref{eq:QGbound} and the result of Corollary~\ref{cor:intPconv}(i).
\end{remark}

\pagebreak

\begin{remark} \label{rem:Nm-vs-Nb}
We conclude the paper by coming back to the parallel, already mentioned in Remark~\ref{rem:Nm-not-mart}, of the `massive/critical' setup discussed in this  paper and more standard `critical/critical' ones. Though the process~$N^{(m)}_t$ does \emph{not} coincide with the density~$(D^{(m)}_t)^{-1}$ and hence one cannot find~$\lambda_t$ directly from~\eqref{x:lambda}, only its martingale part plays a role in the identification of~$\xi_t$ via the martingale property of the process~$N^{(m)}_t\mathrm{d}\xi_t+\mathrm{d}\langle\xi,N^{(m)}\rangle_t$; { see~\eqref{x:dMm2=}.} This is the reason why the drift {term~$2\lambda_t\mathrm{d}t$} in Theorem~\ref{theorem} has exactly the same form as, e.g., in~\cite{zhan-04,izyurov-multiple,wu-hypergeomSLE,kemppainen-smirnov-iv}.
\end{remark}


\end{document}